\theoremstyle{plain}
\newtheorem{thm}{Theorem}
\newtheorem{lem}[thm]{Lemma}
\newtheorem{cor}[thm]{Corollary}
\theoremstyle{definition}
\theoremstyle{remark}
\newtheorem{claim}{Claim}
\newcommand{\real}{\ensuremath {\mathbb R} }
\newcommand{\ent}{\ensuremath {\mathbb Z} }
\newcommand{\nat}{\ensuremath {\mathbb N} }
\newcommand{\remove}[1] {}
\newcommand{\ex} {{\bf E}}
\newcommand{\pr} {{\bf Pr}}
\newcommand{\var} {{\bf Var}}
\newcommand{\eps}{\varepsilon}
\renewcommand{\phi}{\varphi}
\newcommand{\bfrac}[2]{\of{\frac{#1}{#2}}}
\newcommand{\of}[1]{\left( #1 \right)}
\DeclareMathOperator{\Bin}{Bin}
\DeclareMathOperator{\END}{END}
\DeclareMathOperator{\Bernoulli}{Bernoulli}
\DeclareMathOperator{\out}{out}
\DeclareMathOperator{\old}{old}
\title{Perfect matchings and Hamiltonian cycles\\ in the preferential attachment model}
\author{Alan Frieze\thanks{Department of Mathematical Sciences, Carnegie Mellon University, Pittsburgh, PA, USA, e-mail: \texttt{alan@random.math.cmu.edu}; the author is supported in part by NSF grant DMS1362785.}
\and Xavier P\'erez-Gim\'enez\thanks{Department of Mathematics, University of Nebraska, Lincoln, NE, USA, e-mail: \texttt{xperez@unl.ca}}
\and Pawe{\l} Pra{\l}at\thanks{Department of Mathematics, Ryerson University, Toronto, ON, Canada and The Fields Institute for Research in Mathematical Sciences, Toronto, ON, Canada, e-mail: \texttt{pralat@ryerson.ca}; the author is supported in part by NSERC.}
\and Benjamin Reiniger\thanks{Department of Applied Mathematics, Illinois Institute of Technology, Chicago, IL, USA, e-mail: \texttt{breiniger@iit.edu}}
}
\date{}
\begin{document}
\maketitle

\begin{abstract}
In this paper, we study the existence of perfect matchings and Hamiltonian cycles in the preferential attachment model. In this model, vertices are added to the graph one by one, and each time a new vertex is created it establishes a connection with $m$ random vertices selected with probabilities proportional to their current degrees. (Constant $m$ is the only parameter of the model.) We prove that if $m \ge 1{,}260$, then asymptotically almost surely there exists a perfect matching.  Moreover, we show that there exists a Hamiltonian cycle asymptotically almost surely, provided that $m \ge 29{,}500$. One difficulty in the analysis comes from the fact that vertices establish connections only with vertices that are ``older'' (i.e.~are created earlier in the process).
However, the main obstacle arises from the fact that edges in the preferential attachment model are not generated independently. 
In view of that, we also consider a simpler setting---sometimes called uniform attachment---in which vertices are added one by one and each vertex connects to $m$ older vertices selected uniformly at random and independently of all other choices. We first investigate the existence of perfect matchings and Hamiltonian cycles in the uniform attachment model, and then extend the argument to the preferential attachment version. 
\end{abstract}

\section{Introduction}\label{sec:intro}

Two of the most natural questions concerning models of sparse random graph are whether or not a random instance is Hamiltonian or has a perfect matching. The hamiltonicity threshold for the basic models of {\em random graphs} $G(n,m)$ and $G(n,p)$ was established quite precisely by Koml\'os and Szemer\'edi~\cite{KS15}. (See Bollob\'as~\cite{B3}, Ajtai, Koml\'os and Szemer\'edi~\cite{AKS1} and Bollob\'as and Frieze~\cite{BF4} for refinements.) Perfect matchings were first investigated (using Tutte's theorem) by Erd\H{o}s and R\'enyi~\cite{Book279}. (A simpler argument using expansion properties was provided by Bollob\'as and Frieze~\cite{BF4}.) 

\smallskip

The existence of perfect matchings and Hamiltonian cycles has also been investigated for other relevant models of random graphs. For instance, D\'\i az, Mitsche and P\'erez-Gim\'enez~\cite{DMP07} obtained the hamiltonicity threshold for the {\em random geometric graph} $G_{n,r}$, which is defined as follows: the vertices of $G_{n,r}$ are $n$ points chosen independently and uniformly at random from the unit square, and any two vertices are joined by an edge if and only if they are within distance $r$ from each other in the square. The result in~\cite{DMP07} was further strengthened by Balogh, Bollob\'as, Krivelevich, M\"uller and Walters~\cite{BBKMW11} and M\"uller, P\'erez-Gim\'enez and Wormald~\cite{MPW11}, who gave a more precise characterization of the emergence of the first Hamiltonian cycle in the random geometric graph. (The analogous result for perfect matchings follows from a more general statement in~\cite{MPW11}.)

\smallskip

Another well-studied sparse random graph is the \emph{random regular graph}. Let $G_{n,d}$ denote a graph chosen uniformly at random from the set of $d$-regular graphs with vertex set $[n]$. Robinson and Wormald~\cite{RW18} showed that \emph{asymptotically almost surely} (a.a.s.) $G_{n,d}$ is Hamiltonian for constant $d \ge 3$. (Here and in similar statements, an event occurs a.a.s.\ if it occurs with probability tending to $1$ as $n$ tends to infinity.) Allowing $d$ to grow with $n$ presented some challenges, but they have now been resolved (see Cooper, Frieze and Reed~\cite{CFR9} and Krivelevich, Sudakov, Vu and Wormald~\cite{KSVW16}).

\smallskip

Yet another random graph model extensively studied from that perspective is the so-called \emph{$m$-out graph}. This time, each vertex $v \in [n]$ independently chooses $m$ random out-neighbours to create the random digraph $D^n_{m-\out}$. We then obtain $G^n_{m-\out}$ by ignoring orientations. The hamiltonicity of $G^n_{m-\out}$ was first discussed by Fenner and Frieze~\cite{FF11}. They showed that $G^n_{23-\out}$ is a.a.s.\ Hamiltonian. This was improved to $G^n_{10-\out}$ by Frieze~\cite{F12} and to $G^n_{5-\out}$ by Frieze and \L{}uczak~\cite{FL13}. Cooper and Frieze~\cite{CF8} showed that $G^n_{4-\out}$ is a.a.s.\ Hamiltonian, and the last drop was squeezed by Bohman and Frieze who established that $G^n_{3-\out}$ is a.a.s.\ Hamiltonian~\cite{BF}.

\smallskip

In this paper, we study the \emph{preferential attachment model}, which, arguably, is the best-known model for complex networks. The first consideration of this model goes back to 1925 when Yule used it to explain the power-law distribution of the number of species per genus of flowering plants~\cite{Yule}. The application of the model to describe the growth of the World Wide Web was proposed by Barab\'asi and Albert in 1999~\cite{BA}. We will use the following precise definition of the model, as considered by Bollob\'as and Riordan in~\cite{BR} as well as Bollob\'as, Riordan, Spencer, and Tusn\'ady~\cite{BRST}.

Let $G_1^1$ be the graph with one vertex, $1$, and one loop. The random graph process $(G_1^t)_{t \ge 1}$ is defined inductively as follows. Given $G_1^{t-1}$, we form $G_1^t$ by adding vertex $t$ together with a single edge between $t$ and $i$, where $i$ is selected randomly with the following probability distribution:
$$
\pr (i = s) =
\begin{cases}
\deg(s,t-1) / (2t-1) & 1 \le s \le t-1, \\
1/(2t-1) & s=t,
\end{cases}
$$
where  $\deg(s,t-1)$ denotes the degree of vertex $s$ in $G_1^{t-1}$ (i.e.~the degree of $s$ ``at time $t-1$'', after vertex $t-1$ was added). In other words, we send an edge $e$ from vertex $t$ to a random vertex $i$, where the probability that a vertex is chosen as $i$ is proportional to its degree at the time, counting $e$ as already contributing one to the degree of $t$. In particular, each vertex may only be attached to itself or to an ``older'' vertex (i.e.~a vertex created earlier in the process).

For $m \in \nat \setminus \{1\}$, the process $(G_m^t)_{t \ge 1}$ is defined similarly with the only difference that $m$ edges are added to $G_m^{t-1}$ to form $G_m^t$ (one at a time), counting previous edges as already contributing to the degree distribution. Equivalently, one can define the process $(G_m^t)_{t \ge 1}$ by considering the process $(G_1^t)_{t \ge 1}$ on a sequence $1', 2', \ldots$ of vertices; the graph $G_m^t$ is formed from $G_1^{tm}$ by identifying vertices $1', 2', \ldots, m'$ to form $1$, identifying vertices $(m+1)', (m+2)', \ldots, (2m)'$ to form $2$, and so on. Although $(G_m^t)_{t\ge1}$ is an infinite-time random process, we will restrict our attention to a finite number of time steps and consider $(G_m^t)_{1\le t\le n}$ or $G_m^n$ for large $n$. Note that in this model $G_m^n$ is in general a multigraph, possibly with multiple edges between two vertices (if $m\ge2$) and self-loops. For our purpose, as we are interested in Hamiltonian cycles and perfect matchings, loops can be ignored and multiple edges between two nodes can be treated as a single edge.

Perhaps the most remarkable feature of the preferential attachment mechanism is that it provides a simple illustration of the {\em rich-get-richer} principle, by which vertices that acquire higher degree early in the process use this accumulated advantage to attract more edges during the process. This results in a heavy-tailed degree sequence with some vertices of very high degree, which do not typically occur in the previous models of random graphs described above. Indeed, it was shown in~\cite{BRST} that, for any constant $m \in \nat$, a.a.s.\ the degree distribution of $G_m^n$ follows a power law: the number of vertices with degree $k$ falls off as $(1+o(1)) ck^{-3} n$  for some explicit constant $c=c(m)$ and large $k \le n^{1/15}$.

For the purpose of this paper, the case $m=1$ is easy to analyze, since $G_1^n$ is a forest. Each node sends an edge either to itself or to an earlier node, so the graph consists of components which are trees, each with a loop attached. The expected number of components is then $\sum_{t=1}^n 1/(2t-1) \sim (1/2) \log n$ and, since events are independent, we derive that a.a.s.\ there are $(1/2+o(1)) \log n$ components in $G_1^n$ by Chernoff's bound. In particular, $G_1^n$ is a.a.s.\ disconnected and thus contains no Hamiltonian cycle. A similar argument shows that a.a.s.\ many components of $G_1^n$ have an odd number of vertices, so $G_1^n$ has no perfect matching. In view of this, we will restrict our attention throughout the paper to the case $m\ge2$, for which it is known~\cite{BR} that $G_m^n$ is a.a.s.\ connected.

\smallskip

We finally consider the {\em uniform attachment} graph $G^n_{m-\old}$ on vertex set $[n]$, which can be thought of as an intermediate model between the $m$-out and preferential attachment models. In this setting, vertex $1$ has $m$ directed loops. Each vertex $v \in [n] \setminus \{1\}$ independently chooses $m$ random out-neighbours (with repetition) from $[v-1]$, to create the random digraph $D^n_{m-\old}$. We then obtain $G^n_{m-\old}$ by ignoring orientations. (Note the sole purpose of having loops in the definition is to ensure that the number of edges of $G^n_{m-\old}$ is precisely $mn$, but loops and multiple edges play no role in the existence of perfect matchings or Hamiltonian cycles and thus may be ignored.) We may also regard this model as a random process $(G^t_{m-\old})_{1\le t\le n}$, in which at each step a new vertex is created and attached only to older vertices (as in preferential attachment), but the choice of these vertices is uniform (as in the $m$-out model).
It turns out that the fact that edges are always generated towards older vertices causes one of the main difficulties in our analysis, which motivates a unified treatment for uniform and preferential attachment. In fact, we will often examine the uniform attachment case first for simplicity, and then adapt the argument to the non-uniform (rich-get-richer) distribution in preferential attachment. However, a more challenging and technical problem we encountered in transferring the result to not-uniform model was the fact that edges generated in the preferential attachment model are not generated independently. The two-round exposure technique (explained in Subsection~\ref{ssec:two-round}), used in the argument, has to be dealt with care. The lemmas needed to overcome all difficulties might be interesting on their own rights, and could be useful in future applications of this powerful technique.

\smallskip

In this paper, we prove that there exists a constant $c \in \nat$ such that a.a.s.\ both $G^n_m$ and $G^n_{m-\old}$ have a Hamiltonian cycle, provided that $m \ge c$. Of course, the existence of a Hamiltonian cycle implies the existence of a perfect matching\footnote{In the sequel, a perfect matching will denote a matching of size $\lfloor n/2\rfloor$ (thus allowing one unmatched vertex if $n$ is odd).}. However, we treat both properties independently in order to obtain smaller constants for the weaker property. Specifically, for perfect matchings we get $c=159$ for $G^n_{m-\old}$ (Theorem~\ref{thm:matchings-old}) and $c=1{,}260$ for $G^n_m$ (Corollary~\ref{cor:matchings}). For Hamiltonian cycles we get $c=3{,}214$ for $G^n_{m-\old}$ (Theorem~\ref{thm:cycles-old}) and $c=29{,}500$ for $G^n_m$ (Corollary~\ref{cor:cycles}). We tried to tune parameters in the argument to get the corresponding $c$'s as small as possible but clearly, with more effort, one may improve them further. However, it seems that with the existing argument it is impossible to find the threshold $c$ for any of the two properties and any of the two models. In particular, is it true that $G^n_{3}$ or $G^n_{3-\old}$ are a.a.s.\ Hamiltonian? This remains an open problem for now. On the other hand, all we managed to show is that a.a.s.\ $G^n_2$ has no perfect matching (and so also no Hamiltonian cycle) and that $G^n_{2-\old}$ has no Hamiltonian cycle (see discussion in Section~\ref{s:lower}).

\section{Preliminaries}

\subsection{Basic definitions and notation}\label{ssec:notation}

In order to state some of the intermediate results in the paper, it is convenient to extend the definitions of $G^n_{m-\old}$ and $G^n_{m}$ given in Section~\ref{sec:intro} to the case $m=0$ by interpreting both $G^n_{0-\old}$ and $G^n_{0}$ as the graph on vertex set $[n]$ and no edges. Fix any constant $m\in\ent_{\ge0}$. For any vertices $v,w\in[n]$ satisfying $w<v$, we say that $w$ is {\em older} than $v$ (or $v$ is {\em younger} than $w$), since $w$ is created earlier than $v$ in the process leading to $G^n_{m-\old}$ or $G^n_{m}$.
We say that an edge $e=vw$ \emph{stems} from $v$ if $w\le v$ (or equivalently $e$ is one of the $m$ edges that attach $v$ to younger vertices or to $v$ itself at the step of the process when $v$ is added to the graph).

Given a graph $G=(V,E)$ and a set $C \subseteq V$, we will use $N(C)$ to denote the \emph{neighbourhood} of $C$; that is,
\[
N(C) = \{ w \in V \setminus C :  vw \in E \text{ for some } v \in C \},
\]
and we write $N(v)=N(\{v\})$ for simplicity. All asymptotics throughout are as $n \rightarrow \infty $ (we emphasize that the notations $o(\cdot)$ and $O(\cdot)$ refer to functions of $n$, not necessarily positive, whose growth is bounded). We also use the notations $f \ll g$ for $f=o(g)$ and $f \gg g$ for $g=o(f)$. For simplicity, we will write $f(n) \sim g(n)$ if $f(n)/g(n) \to 1$ as $n \to \infty$ (that is, when $f(n) = (1+o(1)) g(n)$).  Since we aim for results that hold a.a.s.\ (see the definition in the previous section), we will always assume that $n$ is large enough. We often write, for example, $G^n_m$ when we mean a graph drawn from the distribution $G^n_m$. Given $k\in\ent_{\ge0}$, we define $k!!=k(k-2)(k-4)\cdots3\cdot1$ for odd $k$ and $k!!=k(k-2)(k-4)\cdots4\cdot2$ for even $k$. Finally, for any $x \in \real_+$, we use $\log x$ to denote the natural logarithm of $x$, and $[k]=\{1,2,\ldots,k\}$ for any $k \in \nat$.

\subsection{Useful bounds}

Most of the time, we will use the following version of \emph{Chernoff's bound}. Suppose that $X \in \Bin(n,p)$ is a binomial random variable with expectation $\mu=np$. If $0<\delta<1$, then 
$$
\pr [X < (1-\delta)\mu] \le \exp \left( -\frac{\delta^2 \mu}{2} \right),
$$ 
and if $\delta > 0$,
\[\pr [ X > (1+\delta)\mu] \le \exp\left(-\frac{\delta^2 \mu}{2+\delta}\right).\]
However, at some point we will need the following, stronger, version: for any $t \ge 0$, we have
\begin{equation}\label{eq:strongChernoff1}
\pr [X \ge \mu + t] \le \exp \left( - \mu \varphi \left( \frac {t}{\mu} \right) \right),
\end{equation}
and for any $0 \le t \le \mu$, we have
\begin{equation}\label{eq:strongChernoff2}
\pr [X \le \mu - t] \le \exp \left( - \mu \varphi \left( \frac {-t}{\mu} \right) \right),
\end{equation}
where
\begin{equation}\label{eq:phidef}
\varphi(x) = (1+x)\log(1+x)-x.
\end{equation}
Moreover, let us mention that all of these bounds hold for the general case in which $X$ is a sum of $\Bernoulli(p_i)$ indicator random variables with (possibly) different $p_i$. These inequalities are well known and can be found, for example, in~\cite{JLR}.

\medskip

We will also use a standard martingale tool: the \emph{Hoeffding-Azuma inequality}.  As before, see for example~\cite{JLR} for more details. Let $X_0, X_1, \ldots$ be a martingale. Suppose that there exist $c_1, c_2, \ldots,c_n >0$ such that $|X_k - X_{k-1}| \le c_k$ for each $1\le k \le n$. Then, for every $x >0$,
\begin{equation}\label{eq:HA-inequality1}
\pr [ X_n > \ex X_n + x ] \le \exp \left(-\frac{x^2}{2\sum_{k=1}^n {c_k}^2}\right).
\end{equation}

The Hoeffding-Azuma inequality can be generalized to include random variables close to martingales. One of our proofs, proof of Lemma~\ref{lem:total_weight_specific}, will use the supermartingale method of Pittel et al.~\cite{Pittel99}, as described in~\cite[Corollary~4.1]{Wormald-DE}. Let $X_0, X_2, \ldots, X_n$ be a sequence of random variable. Suppose that there exist $c_1, c_2, \ldots,c_n >0$ and $b_1, b_2, \ldots,b_n >0$ such that 
$$
|X_k - X_{k-1}| \le c_k \quad \text{ and } \quad \ex [X_k - X_{k-1} | X_{k-1} ] \le b_k
$$ 
for each $1\le k \le n$. Then, for every $x >0$,
\begin{equation}\label{eq:HA-inequality2}
\pr \left[ \text{For some $t$ with $0 \le t \le n$: } X_t - X_0 > \sum_{k = 1}^{t} b_k + x \right] \le \exp \left(-\frac{x^2}{2\sum_{k=1}^n {c_k}^2}\right).
\end{equation}

\bigskip

Finally, we include the following auxiliary lemma that will simplify some calculations in Lemma~\ref{lem:total_weight}.
(We use the convention that the empty product is equal to $1$ and the empty sum is equal to $0$.)
\begin{lem}\label{lem:aux}
Given any integers $a=a(n)$ and $b=b(n)$ such that $0\le a \le b$, define 
\[
c_{a,b} = \prod_{i=a+1}^b \left(\frac{2i-1}{2i}\right).
\]
Then, 
\[
c_{a,b} = e^{O(\eps)} \sqrt{a/b},
\]
where $\eps = 1/(a+1)$, and the constants involved in the $O()$ notation do not depend on $a$ or $b$.
Moreover
\[
\sum_{i=a+1}^b {c_{a,i}}^2 = e^{O(\eps)} a\log(b/a).
\]
\end{lem}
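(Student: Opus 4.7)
The plan is to reduce both parts to classical asymptotic estimates via Stirling's formula and the Euler--Maclaurin expansion of the harmonic sum.

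For the first part, I would start from the closed-form identity
\[
\prod_{i=1}^n \frac{2i-1}{2i} = \frac{(2n)!}{4^n (n!)^2} = \frac{1}{4^n}\binom{2n}{n},
\]
so that, for $a\ge 1$, $c_{a,b} = \bigl(\binom{2b}{b}/4^b\bigr)/\bigl(\binom{2a}{a}/4^a\bigr)$ (the degenerate case $a=0$ can be handled separately, as it will not be needed in the applications of the lemma). Applying the refined Stirling estimate $\binom{2n}{n}/4^n = (\pi n)^{-1/2}(1+O(1/n))$ to numerator and denominator and simplifying yields
\[
c_{a,b} = \sqrt{a/b}\,(1+O(1/a)),
\]
using $1/b\le 1/a$, which is exactly $e^{O(\eps)}\sqrt{a/b}$.

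For the second part, I would substitute the first estimate to obtain $c_{a,i}^2 = (a/i)(1+O(\eps))$ uniformly in $i\in\{a+1,\dots,b\}$, and hence
\[
\sum_{i=a+1}^b c_{a,i}^2 = (1+O(\eps))\,a\,(H_b-H_a),
\]
where $H_n$ is the $n$th harmonic number. The Euler--Maclaurin expansion $H_n = \log n + \gamma + 1/(2n) + O(1/n^2)$ gives
\[
a(H_b-H_a) = a\log(b/a) - \frac{b-a}{2b} + O(1/a).
\]
It then remains to check that both corrections represent only a multiplicative $e^{O(\eps)}$ factor on $a\log(b/a)$. The key estimate is $(b-a)/(2b)\le\tfrac12\log(b/a)$, which follows from the standard inequality $\log(1+x)\ge x/(1+x)$; this bounds the relative contribution of the first correction by $1/(2a)=O(\eps)$. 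The $O(1/a)$ tail contributes a relative error of order $1/(a\cdot a\log(b/a))$, which is again $O(\eps)$ because $a\log(b/a)\ge a\log(1+1/a)\ge 1/2$ whenever $b>a$.

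The only delicate point is this last bookkeeping step in the regime $b=a+O(1)$, where $a\log(b/a)$ is only $\Theta(1)$, so the danger is that the additive $O(1/a)$ error might not be absorbable into a multiplicative $e^{O(\eps)}$ factor. The uniform lower bound $a\log(b/a)\ge 1/2$ (valid for all $b>a$, $a\ge 1$) is precisely what rescues this: it converts the additive $O(1/a)$ into a relative $O(\eps)$ error. Past that observation, the argument is a routine combination of Stirling's formula and harmonic sum asymptotics.
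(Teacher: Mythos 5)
Your proposal is correct, but it reaches the first estimate by a genuinely different route than the paper. The paper expands each factor termwise, writing $1-\tfrac{1}{2i}=\exp\left(-\tfrac{1}{2i}+O(i^{-2})\right)$, so that $\log c_{a,b}=-\sum_{i=a+1}^b\tfrac{1}{2i}+O(\eps)=-\tfrac12\log(b/a)+O(\eps)$; the additive $O(\eps)$ error sits in the exponent from the start, so the multiplicative form $e^{O(\eps)}\sqrt{a/b}$ requires no further conversion. You instead use the closed form $\prod_{i=1}^n\frac{2i-1}{2i}=\binom{2n}{n}4^{-n}$ together with Stirling, which is equally valid and yields sharp constants, at the modest price of converting $1+O(1/a)$ factors into $e^{O(\eps)}$ (harmless, since the Stirling corrections are uniformly bounded, but worth a word when $a$ is bounded). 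For the second estimate both arguments reduce to $a\sum_{i=a+1}^b 1/i$, and here you are actually more careful than the paper, which silently replaces $\sum_{i=a+1}^b 1/i$ by $\log(b/a)$ inside an $e^{O(\eps)}$: your use of $\log(1+x)\ge x/(1+x)$ (equivalently $\log(b/a)\ge (b-a)/b$) together with the uniform bound $a\log(b/a)\ge\log 2$ for $b>a$ is precisely what justifies that step, since it shows the discrepancy between $H_b-H_a$ and $\log(b/a)$ is at most a $\tfrac{1}{2a}$ relative error. One cosmetic remark: like the paper's proof, yours really covers $a\ge1$ (plus the trivial case $a=b$); the statement's inclusion of $a=0<b$ is an imprecision of the lemma itself ($\sqrt{a/b}=0$ and $\log(b/a)$ undefined), and since the lemma is only applied with $a=mk\ge1$, setting that case aside as you do is consistent with the paper's own treatment.
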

\begin{proof}
The case $a=b$ is trivial, so we may assume $a<b$. Then, 
\begin{align*}
c_{a,b} &= \prod_{i=a+1}^b \left(1-\frac{1}{2i}\right)
= \prod_{i=a+1}^b \exp\left(-\frac{1}{2i} + O(i^{-2})\right)
= \exp\left(- \sum_{i=a+1}^b \frac{1}{2i} + O(\eps)\right)
\\
&= \exp\left(- \frac{1}{2}\log (b/a) + O(\eps)\right)
= e^{O(\eps)} \sqrt{a/b},
\end{align*}
where we used the fact that the harmonic sum satisfies $\sum_{i=1}^n 1/i = \log n + \gamma + O(1/n)$. This proves the first claim of the lemma. The following calculation yields the second claim:
\[
\sum_{i=a+1}^b {c_{a,i}}^2 = e^{O(\eps)} a \sum_{i=a+1}^b 1/i = e^{O(\eps)} a \log(b/a).\qedhere
\]
\end{proof}

\subsection{Two-round exposure}\label{ssec:two-round}

Fix constants $m,m_1,m_2\in\ent_{\ge0}$ such that $m=m_1+m_2$, and recall the definitions in Section~\ref{ssec:notation}. For each vertex $v\in[n]$ of either model $G^n_{m-\old}$ or $G^n_{m}$, we colour blue the first $m_1$ edges stemming from $v$ and colour red the $m_2$ remaining ones.
We denote the resulting edge-coloured versions of $G^n_{m-\old}$ and $G^n_{m}$ by $G^n_{m_1,m_2-\old}$ and $G^n_{m_1,m_2}$, respectively. (Note that these edge colourings are not proper in the usual graph-theoretical sense.)
Let $\pi_1\big(G^n_{m_1,m_2-\old}\big)$ be the graph on vertex set $[n]$ whose edges are precisely the blue edges of $G^n_{m_1,m_2-\old}$, and symmetrically let $\pi_2\big(G^n_{m_1,m_2-\old}\big)$ be the graph with the red edges of $G^n_{m_1,m_2-\old}$. Moreover, let $\pi\big(G^n_{m_1,m_2-\old}\big)$ be the union of graphs $\pi_1\big(G^n_{m_1,m_2-\old}\big)$ and $\pi_2\big(G^n_{m_1,m_2-\old}\big)$ --- or simply $G^n_{m_1,m_2-\old}$ --- after removing the colours from the edges. By construction, $\pi\big(G^n_{m_1,m_2-\old}\big)$  is distributed as $G^n_{m-\old}$. Finally, graphs $\pi_1\big(G^n_{m_1,m_2}\big)$, $\pi_2\big(G^n_{m_1,m_2}\big)$ and $\pi\big(G^n_{m_1,m_2}\big)$ are defined analogously by replacing $G^n_{m_1,m_2-\old}$ by $G^n_{m_1,m_2}$.

Observe that, for each $\sigma\in\{1,2\}$, $\pi_\sigma\big(G^n_{m_1,m_2-\old}\big)$ has the same distribution as $G^n_{m_\sigma-\old}$ (so we can identify $\pi_\sigma\big(G^n_{m_1,m_2-\old}\big)$ and $G^n_{m_\sigma-\old}$), and moreover the graphs $\pi_1\big(G^n_{m_1,m_2-\old}\big)$ and $\pi_2\big(G^n_{m_1,m_2-\old}\big)$ are independent of each other.
In other words, the union of two independent uniform attachment graphs $G^n_{m_1-\old}$ and $G^n_{m_2-\old}$ is distributed as $G^n_{m-\old}$ with $m=m_1+m_2$. This allows us to build $G^n_{m-\old}$ by first exposing the edges of $G^n_{m_1-\old}$ (blue edges) and then adding the edges of $G^n_{m_2-\old}$ (red edges).
This technique is know as the two-round exposure method, and will be repeatedly used in Section~\ref{sec:upper} to build a perfect matching and a Hamiltonian cycle in $G^n_{m-\old}$. 

Unfortunately, the analogous property does not hold for the preferential attachment model (unless\footnote{If $m_2=0$ then $\pi_2\left(G^n_{m_1,m_2}\right)$ has no edges, so $\pi_1\left(G^n_{m_1,m_2}\right) = G^n_{m} = G^n_{m_1}$, and vice-versa.} $m_1$ or $m_2$ equals $0$).
That is, if $m_1,m_2>0$, then $G^n_{m-\old}$ is not distributed as the union of independent $G^n_{m_1-\old}$ and $G^n_{m_2-\old}$.
This is due to the fact that, when a new edge stemming from a vertex $v$ is created, the choice of the second endpoint depends on all the edges (both blue and red) previously added during the process.
We want to use a version of the two-round exposure technique, and build the preferential attachment graph $G^n_{m}$ as the union of $\pi_1\left(G^n_{m_1,m_2}\right)$ and $\pi_2\left(G^n_{m_1,m_2}\right)$. However (again if $m_1,m_2>0$),  for each $\sigma\in\{1,2\}$, graph $\pi_\sigma\left(G^n_{m_1,m_2}\right)$ is not distributed as $G^n_{m_\sigma}$, and moreover $\pi_1\left(G^n_{m_1,m_2}\right)$ and $\pi_2\left(G^n_{m_1,m_2}\right)$ are not independent of each other.
In spite of all these obstacles, our argument will analyze $\pi_1\left(G^n_{m_1,m_2}\right)$ and $\pi_2\left(G^n_{m_1,m_2}\right)$ separately, in a similar fashion as we deal with $G^n_{m_1-\old}$ and $G^n_{m_2-\old}$ for the uniform attachment model.
This time though, we will need to use properties of $\pi_1\left(G^n_{m_1,m_2}\right)$ that hold a.a.s.\ conditional on a given $\pi_2\left(G^n_{m_1,m_2}\right)$ and vice-versa.

\subsection{Relationship between $G^n_m$ and $G^n_{m-\old}$}\label{ssec:relationship}

In this section, we will analyze some basic features of $G^n_{m-\old}$ and $\pi_\sigma\big(G^n_{m_1,m_2}\big)$, where $\sigma\in\{1,2\}$ and $m,m_1,m_2$ are fixed nonnegative integers such that $m=m_1+m_2$. As already mentioned, if $m_2=0$ then $\pi_1\big(G^n_{m_1,m_2}\big)$ is simply the preferential attachment model $G^n_{m}$. Hence, various lemmas stated below for $G^n_{m_1,m_2}$ also apply to $G^n_{m}$.

Given any vertex $v\in[n]$ and any set $W\subseteq[v-1]$ of vertices older than $v$, the probability in $G^n_{m-\old}$ that there is no edge between $v$ and $W$ is precisely
\begin{equation}\label{eq:m-old}
\left( 1 - \frac{|W|}{v-1} \right)^m.
\end{equation}
Moreover, the corresponding event is independent from all edges added during the construction of $G^n_{m-\old}$ stemming from vertices different from $v$.
This is a crucial fact in our argument, which unfortunately does not extend to the preferential attachment model $G^n_m$, since the attachments of vertex $v$ are not independent from ``the future" (i.e.~they are not independent from the edges that stem from vertices younger than $v$, which are added later in the process).
Fortunately,  we can obtain a weaker statement that will suffice for our purposes, and may be useful for other applications.

\begin{lem}\label{lem:bound_no_edges_future}
Fix any constants $m,m_1,m_2\in\ent_{\ge0}$ with $m=m_1+m_2$, and let $\sigma\in\{1,2\}$. Given any vertex $v\in[n]$ and any set $W\subseteq[v-1]$, let $E$ be any event of $G^n_{m_1,m_2}$ that does not involve any edges stemming from $v$ in $\pi_\sigma\big(G^n_{m_1,m_2}\big)$.
Then, conditional upon $E$, the probability that $\pi_\sigma\big(G^n_{m_1,m_2}\big)$ has no edges between $v$ and $W$ is at most
\[
\left( 1 - \frac {|W|}{v+n} \right)^{m_\sigma} \le \left( 1 - \frac {|W|}{2n} \right)^{m_\sigma}.
\]
\end{lem}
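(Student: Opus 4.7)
The plan is to work in the linearised $G_1^N$ representation of $G^n_m$ (with $N = mn$), in which the super-vertex $v$ of $G^n_m$ is obtained by identifying the $G_1$-vertices $(v-1)m+1, \ldots, vm$. In this picture, the $\sigma$-coloured stems of $v$ correspond to a specific block of $m_\sigma$ slots $T_\sigma \subseteq \{(v-1)m+1, \ldots, vm\}$, the set $W$ corresponds to $W' = \bigcup_{w \in W}\{(w-1)m+1, \ldots, wm\}$ (of size $m|W|$), and the event that $\pi_\sigma$ has no edge between $v$ and $W$ becomes $F = \{X_\tau \notin W' \text{ for all } \tau \in T_\sigma\}$. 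The hypothesis on $E$ translates to the statement that $E$ is measurable with respect to the coordinates $(X_t)_{t \notin T_\sigma}$.

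The core tool is the closed-form identity
\[
\pr[(X_1, \ldots, X_N) = (x_1, \ldots, x_N)] \;=\; \frac{\prod_{s=1}^N (d_s(\vec{x}) - 1)!}{(2N-1)!!},
\]
where $d_s(\vec{x})$ is the final $G_1$-degree of $s$; this follows in a few lines by taking the product $\prod_t \deg(x_t, t-1)/(2t-1)$ dictated by the PA rule and regrouping the factors by destination vertex. Since $\pr[F \mid E]$ is a convex combination of the probabilities $\pr[F \mid \vec{a}]$ over configurations $\vec{a}$ of $(X_t)_{t \notin T_\sigma}$ consistent with $E$, it suffices to prove the deterministic bound $\pr[F \mid \vec{a}] \le (1 - |W|/(v+n))^{m_\sigma}$ uniformly in $\vec{a}$. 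I plan to obtain this by iterating a one-slot-at-a-time estimate: if for every $\tau \in T_\sigma$ and every fixing of the other slots in $T_\sigma$ one has $\pr[X_\tau \in W' \mid \text{everything else}] \ge |W|/(v+n)$, then elementary chain-rule conditioning (peeling off the slots of $T_\sigma$ one at a time) delivers the required product.

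The main work, and the step I expect to be the heart of the proof, is this per-slot lower bound. Comparing the products $\prod_s(d_s - 1)!$ as $X_\tau$ ranges over $\{1, \ldots, \tau\}$ with all other coordinates of $\vec{X}$ frozen yields a clean expression for the conditional distribution: writing $d_s^{(0)}$ for the $G_1$-degree of $s$ in the graph obtained by deleting the unique edge stemmed from $\tau$, one finds
\[
\pr[X_\tau = y \mid \text{else}] \;\propto\; d_y^{(0)} \ \ (y < \tau), \qquad \pr[X_\tau = \tau \mid \text{else}] \;\propto\; d_\tau^{(0)} + 1,
\]
with normalising denominator $Z = 2N - 1 - \sum_{s > \tau} d_s$. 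The crucial observation is simply that in the reduced graph every vertex $s \ne \tau$ still has its own stem edge, so $d_s^{(0)} \ge 1$. Applied to $W'$ this yields $\sum_{y \in W'} d_y^{(0)} \ge m|W|$ for the numerator, and applied to vertices $s > \tau$ it gives $\sum_{s > \tau} d_s \ge N - \tau$, whence $Z \le N + \tau - 1 < m(v + n)$ using $\tau \le vm$ and $N = mn$. Putting these bounds together produces $\pr[X_\tau \in W' \mid \text{else}] \ge m|W|/(m(v+n)) = |W|/(v+n)$, which is exactly the per-slot estimate needed, and the product bound follows.
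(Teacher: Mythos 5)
Your proposal is correct and follows essentially the same route as the paper: both reduce to a one-edge-at-a-time claim conditional on the full realization of all other edges, use the exact configuration probability $\prod_s (d_s-1)!/(2N-1)!!$ (the paper's $d_u\prod_w f_w/(mn-1)!!$) to see that the conditional attachment law is proportional to current degrees, bound the relevant numerator below by $m|W|$ and the normalizer above by $m(v+n)$, and then chain over the $m_\sigma$ edges exactly as the paper does with $E_i' = E\cap\overline{B_1}\cap\cdots\cap\overline{B_{i-1}}$. Carrying this out in the $G_1^{mn}$ linearization rather than directly in $G_m^n$ is a cosmetic (and arguably cleaner) difference, not a different argument.
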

\begin{proof}
The statement is trivially true if $m_\sigma=0$, so we assume that $m_\sigma\ge1$.

Let $e_1,\ldots,e_{m_\sigma}$ be the $m_\sigma$ edges that stem from $v$ in $\pi_\sigma(G^n_{m_1,m_2})$. Recall that these edges are blue if $\sigma=1$, and red if $\sigma=2$.
For each $i\in[m_\sigma]$ and each $u\in[v]$, let $B_{i,u}$ be the event that $e_i$ is attached to vertex $u$, and let $B_i=\bigcup_{u\in W} B_{i,u}$ be the event that $e_i$ is attached to some vertex in $W$.
Also, let $E'_i$ be any event that involves only edges (blue or red) of $G^n_{m_1,m_2}$ that are different from $e_i$.
\begin{claim}\label{singleclaim}
Fix $i\in[m_\sigma]$. The probability of $B_i$ conditional upon $E'_i$ is at least $|W|/(v+n)$.
\end{claim}
The lemma follows immediately by repeatedly applying Claim~\ref{singleclaim} to each $B_i$ ($i\in[m_\sigma]$) with
\[
E'_i = E\cap\overline{B_1}\cap\overline{B_2}\cap\cdots\cap\overline{B_{i-1}}.
\]
We proceed to prove Claim~\ref{singleclaim}. Since $i\in[m_\sigma]$ is fixed, we will omit for simplicity the subindex $i$ from the notation. The colours of the edges play no role in this claim, so we may ignore them and regard $G^n_{m_1,m_2}$ simply as $G^n_m$ with $m=m_1+m_2$. Also, we can assume that event $E'$ fully determines all $mn-1$ edges of the process $\left(G^t_m\right)_{1\le t\le n}$ other than $e$. (Indeed, any other situation can be trivially reduced to this one by using the law of total probabilities.) Moreover, we may assume that $E'$ not only determines edges that are introduced during the process but also an order in which they were added.

For each $u\in[n]\setminus \{v\}$, let $d_u$ be the degree of $u$ in $G^n_m - e$ conditional upon $E'$ (each self-loop of $u$ contributes with $2$ to this degree). Define $d_v$ analogously but adding an extra $1$ to account for the contribution of the first endpoint of $e$.
It is easy to observe from the definition of the process $\left(G^t_m\right)_{1\le t\le n}$ that, given any $u\in[v-1]$, 
\begin{equation}\label{eq:PBuE}
\pr(B_u \cap E') = \frac{d_u\prod_{w\in[n]} f_w}{(mn-1)!!},
\end{equation}
where each factor $f_w$ depends only on $E'$. For instance, if vertex $w\in[n]$ has no self-loops in $G^n_m - e$ given $E'$ then $f_w=(d_w-1)!/(m-1)!$. Similar expressions for $f_w$ can be easily computed in the case that $w$ has some self-loops (the value of $f_w$ depends on which edges stemming from $w$ create a loop). The case $u=v$ is slightly different, but we have
\begin{equation}\label{eq:PBvE}
\pr(B_v \cap E') \le \frac{d_v\prod_{w\in[n]} f_w}{(mn-1)!!}.
\end{equation}
%
In view of~\eqref{eq:PBuE} and~\eqref{eq:PBvE}, for every $u\in[v-1]$,
\[
\pr(B_u \mid E') =
\frac{\pr(B_u \cap E')}{\sum_{w\in[v]}\pr(B_w \cap E')} \ge
\frac{d_u}{\sum_{w\in[v]} d_w} \ge
\frac{d_u}{2mv + m(n-v) } =
\frac{d_u}{m(v+n)}.
\]
Therefore, summing over $u\in W$,
\[
\pr(B \mid E') \ge \sum_{u\in W} \frac{d_u}{m(v+n)} \ge \frac{m|W|}{m(v+n)} = \frac{|W|}{v+n},
\]
as required. This yields Claim~\ref{singleclaim} and finishes the proof of the lemma.
\end{proof}
Note that the bound in the statement of Lemma~\ref{lem:bound_no_edges_future} could be much worse than~\eqref{eq:m-old}, especially if $v$ is much smaller than $n$. However, in those cases in which event $E$ does not depend upon the ``future" of vertex $v$ (i.e.~does not depend on edges stemming from younger vertices in the process), we can derive a much better bound which resembles that of~\eqref{eq:m-old}.

\begin{lem}\label{lem:bound_no_edges}
Fix any constants $m,m_1,m_2\in\ent_{\ge0}$ with $m=m_1+m_2$ and let $\sigma\in\{1,2\}$. Given any vertex $v\in[n]$ and any set $W\subseteq[v-1]$,
let $E$ be any event that involves only edges that stem from vertices in $[v-1]$ (i.e.~older than $v$) in the process leading to $G^n_{m_1,m_2}$. Then, conditional upon $E$,
the probability that $\pi_\sigma\big(G^n_{m_1,m_2}\big)$ has no edges between $v$ and $W\subseteq[v-1]$ is at most
%
\[
\left( 1 - \frac {|W|}{2v} \right)^{m_\sigma}.
\]
%
\end{lem}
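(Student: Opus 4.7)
The approach mirrors the proof of Lemma~\ref{lem:bound_no_edges_future}, but the crucial improvement is that, since $E$ only involves edges stemming from vertices in $[v-1]$, it has no bearing on the edges stemming from $v$ or from any younger vertex. This allows us to use the true process normalisation at the moment each edge from $v$ is placed, which is only $O(v)$, instead of the $O(n)$ bound that Lemma~\ref{lem:bound_no_edges_future} had to tolerate.

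Let $f_1,\ldots,f_m$ denote the edges stemming from $v$, listed in the order they are placed, and let $S\subseteq[m]$ be the set of $\sigma$-coloured indices, so that $|S|=m_\sigma$. For each $j\in[m]$, let $H_j$ be the event that the endpoint of $f_j$ other than $v$ lies in $W$. The key step is to establish the pointwise inequality
\[
\pr\left( H_j \,\Big|\, E,\, f_1,\ldots,f_{j-1} \right) \ge \frac{|W|}{2v} \qquad (j\in[m]).
\]
To prove this, I would further condition on $G_m^{v-1}$. In the $G_1$ view, $f_j$ corresponds to the vertex $(v-1)m+j$, so its second endpoint is selected with probability proportional to the current degrees, normalised by $2((v-1)m+j)-1\le 2vm$. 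Every $u\in[v-1]$ has exactly $m$ edges stemming from it in $G_m^{v-1}$, each contributing at least $1$ to its degree, so $\deg(u)\ge m$ at the instant $f_j$ is placed. Summing over $u\in W$ yields $\pr(H_j \mid G_m^{v-1}, f_1,\ldots,f_{j-1})\ge m|W|/(2vm)=|W|/(2v)$ almost surely, and averaging over the conditional distribution of $G_m^{v-1}$ given $E$ and $f_1,\ldots,f_{j-1}$ preserves this lower bound.

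The lemma then follows from the chain rule applied to the subset $S$:
\[
\pr\left( \bigcap_{j\in S}\overline{H_j} \,\Big|\, E \right) = \prod_{j\in S}\pr\left( \overline{H_j} \,\Big|\, E,\, \overline{H_{j'}} \text{ for } j'\in S,\, j'<j \right).
\]
Each factor is at most $1-|W|/(2v)$, since it is the average of $\pr(\overline{H_j}\mid E,f_1,\ldots,f_{j-1})\le 1-|W|/(2v)$ over a conditional distribution that also marginalises out the intermediate edges $f_{j'}$ with $j'\notin S$ and $j'<j$. Multiplying the $m_\sigma$ factors gives exactly the claimed bound.

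The main technical point is to keep the conditioning in the chain rule step clean. Because the degree lower bound $\deg(u)\ge m$ for $u\in[v-1]$ is deterministic rather than merely in expectation, the pointwise inequality on $\pr(H_j\mid G_m^{v-1},f_1,\ldots,f_{j-1})$ is robust under every averaging step, and no further information about the conditional distribution of $G_m^{v-1}$ given $E$ is needed.
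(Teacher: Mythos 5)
Your proof is correct and follows essentially the same route as the paper's: condition on all edges stemming from $[v-1]$ together with the earlier edges from $v$, use the deterministic bounds (total degree of $W$ at least $m|W|$, normalisation at most $2mv$) to bound each $\sigma$-coloured edge's chance of avoiding $W$ by $1-|W|/(2v)$, and multiply over the $m_\sigma$ edges. The only difference is that you spell out the tower-property/chain-rule bookkeeping that the paper leaves implicit.
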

\begin{proof}
Suppose that all (blue and red) edges of $G^n_{m_1,m_2}$ stemming from vertices in $[v-1]$ have already been exposed. Additionally, suppose that some edges  (or perhaps none) stemming from $v$ have been exposed as well. We add a new edge $e$ stemming from $v$. At that time, the total degree of $W$ (i.e.~the sum of the degrees of all its vertices) is at least $m|W|$, and the the total degree of the graph (counting $e$ as already contributing one to the degree of $v$) is at most $2m v$. Therefore, the conditional probability that $e$ does not join $v$ to a vertex in $W$ is at most
\[
1 - \frac {m|W|}{2mv} = 1 - \frac {|W|}{2v}.
\]
The lemma follows immediately by considering the $m_\sigma$ edges stemming from $v$ in $\pi_\sigma\big(G^n_{m_1,m_2}\big)$.
\end{proof}

In view of this, some of the proofs for $G^n_{m-\old}$ in the paper that use~\eqref{eq:m-old} can be easily adapted to $\pi_\sigma\big(G^n_{m_1,m_2}\big)$ (and thus to $G^n_m$) by adjusting constants. Most of the time, Lemma~\ref{lem:bound_no_edges_future} or Lemma~\ref{lem:bound_no_edges} will be enough to do it.
However, at some point, we will need an upper bound on the probability in $\pi_\sigma\big(G^n_{m_1,m_2}\big)$ that all the neighbours of $v$ fall inside of $W$. In order to do that we will use Lemma~\ref{lem:extra} below but, before we can state it, we need the following lemma that bounds the total degree of a set $W$ at the time vertex $v$ is created in the process leading to $G^n_{m}$.

\begin{lem}\label{lem:total_weight} 
Let $A>0$ be a sufficiently large constant. Fix any constant $m\in\ent_{\ge0}$, and let $\omega=\omega(n)$ be any function tending to infinity as $n \to \infty$. The following property holds a.a.s.\ for $G^n_m$. For any $1 \le k \le t \le n$ and any set $W \subseteq [t]$ of size $k$,
$$
\sum_{w \in W} \deg(w,t) \le 
\begin{cases}
\big(1+A/(mk+1)\big) \left( 2m \sqrt{kt}  + \sqrt{8mkt}\log(et/k) \right)
& \text{if $t \ge \omega$,} \\
2m \omega & \text{otherwise.}
\end{cases}
$$
\end{lem}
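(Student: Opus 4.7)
The plan is to apply the Hoeffding--Azuma inequality to a suitable martingale for $S(t):=\sum_{w\in W}\deg(w,t)$ with $W$ fixed, and then union-bound over $W$, $k$, and $t$. For $t<\omega$ the trivial bound $S(t)\le 2mt\le 2m\omega$ already does the job, so I focus on $t\ge\omega$.

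I would pass to the equivalent $G_1$ process on sub-vertices $[nm]$ described in Section~\ref{sec:intro}, where vertex $w\in[n]$ of $G^n_m$ corresponds to the block $B_w=\{(w-1)m+1,\ldots,wm\}$. For $W\subseteq[t_0]$ of size $k$ with $t_0:=\max W$, set $W':=\bigcup_{w\in W}B_w\subseteq[t_0m]$ and $S(j):=\sum_{i\in W'}\deg_1(i,j)$ for $j\ge t_0m$. Since no sub-vertex of $W'$ is created after step $t_0m$ and the new sub-vertex $j+1$ attaches to $i\le j$ with probability $\deg_1(i,j)/(2j+1)$, one has $\ex[S(j+1)\mid\mathcal F_j]=S(j)(2j+2)/(2j+1)$, so that $M(j):=S(j)\cdot c_{t_0m,j}$ is a martingale on $j\ge t_0m$, with $c_{a,b}$ as in Lemma~\ref{lem:aux}. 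Using $|S(j+1)-S(j)|\le 1$ and $S(j)\le 2j$, a short calculation gives $|M(j+1)-M(j)|\le 2c_{t_0m,j+1}$.

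Lemma~\ref{lem:aux} then yields both $\sum_{j=t_0m+1}^{tm}c_{t_0m,j}^2=e^{O(\eps)}t_0m\log(t/t_0)$ (with $\eps=1/(t_0m+1)\le 1/(mk+1)$) and $c_{t_0m,tm}^{-1}=e^{O(\eps)}\sqrt{t/t_0}$, which, plugged into~\eqref{eq:HA-inequality1} and rescaled by $c_{t_0m,tm}^{-1}$, give a tail bound of the form $\pr[S(tm)-\ex S(tm)>y]\le\exp(-y^2/(8mt\log(t/t_0)(1+O(\eps))))$. For the expectation, the martingale property provides $\ex S(tm)=\ex S(t_0m)/c_{t_0m,tm}$, and $\ex S(t_0m)=\sum_{i\in W'}c_{i,t_0m}^{-1}$; since $c_{i,t_0m}^{-1}$ is decreasing in $i$, a rearrangement argument shows that this sum (over block-structured $W'$ of size $km$) is maximized when $W=[k]$, i.e.~$W'=[km]$, in which case Lemma~\ref{lem:aux} evaluates it as $(1+O(1/(mk+1)))\cdot 2m\sqrt{kt_0}$, yielding $\ex S(tm)\le(1+O(1/(mk+1)))\cdot 2m\sqrt{kt}$. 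Setting $y=\sqrt{8(1+\delta)mkt}\log(et/k)$ for a slack $\delta>0$ to be chosen, the exponent of the tail bound is at most $-(1+\delta)k\log^2(et/k)/\log(t/t_0)$, which, since $t_0\ge k$ gives $\log(t/t_0)\le\log(et/k)$, is at most $-(1+\delta)k\log(et/k)$. Combined with $\binom{t}{k}\le(et/k)^k$ and an extra $n^2$ factor for the union over $t$ and $k$, the bound goes through as long as $\delta k\log(et/k)\gg\log n$.

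The main obstacle is choosing $\delta$ and bookkeeping the $(1+o(1))$ errors so that the cumulative multiplicative error fits inside the prefactor $(1+A/(mk+1))$ for a single sufficiently large constant $A$. The tightest regime is when $k$ is small (say $k$ comparable to $\log n$ or less), where $\delta$ must be a relatively large constant for the union bound to succeed; fortunately, in that range $A/(mk+1)$ is itself $\Theta(1)$ and provides the required slack through $\sqrt{1+\delta}\le 1+A/(mk+1)$. For $k$ comparable to $t$ (where $\log(t/t_0)$ may be close to $\log(et/k)$ and the Azuma tail becomes too weak to beat $\binom{t}{k}$), I would instead fall back on the trivial bound $S(tm)\le 2mt$ and verify directly that $2mt\le(1+A/(mk+1))(2m\sqrt{kt}+\sqrt{8mkt}\log(et/k))$ for $A$ large enough---a routine check using $2m\sqrt{kt}\to 2mt$ as $k\to t$ and the fact that the error term $\sqrt{8mkt}\log(et/k)$ covers any residual in the intermediate regime.
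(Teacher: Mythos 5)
Your outline shares the paper's general machinery (passing to the $G_1^{mn}$ process, the multiplicative normalization $c_{a,b}$ from Lemma~\ref{lem:aux}, Hoeffding--Azuma, union bound), but it misses the one idea the paper's proof actually hinges on, and this leaves a genuine gap. You start your martingale $M(j)=S(j)\,c_{t_0m,j}$ at time $t_0m$ with $t_0=\max W$, so its initial value $S(t_0m)$ is \emph{random}. Inequality~\eqref{eq:HA-inequality1} controls only the increments after $t_0m$; it does not control the deviation of $S(t_0m)$ from its mean, so the asserted tail bound $\pr[S(tm)-\ex S(tm)>y]\le\exp\bigl(-y^2/(8mt\log(t/t_0)(1+O(\eps)))\bigr)$ does not follow. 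The failure is visible in the extreme case: if $W$ contains a vertex close to $t$ (say $t_0=t$), your martingale window is empty, the bound would assert essentially perfect concentration, yet all the randomness of $\sum_{w\in W}\deg(w,t)$ sits in the uncontrolled initial value $S(t_0m)$ (e.g.\ $W=\{1,t\}$, where $\deg(1,t)$ fluctuates on scale $\sqrt t$). Your rearrangement argument handles only the \emph{expectation} of a general $W$; it says nothing about its fluctuations. The paper avoids this entirely by a stochastic-domination observation: for any $W$ of size $k$, the process $\sum_{j'\in W}\deg_{G_1^t}(j',s)$ is stochastically dominated by $X_s$, the degree sum of the $mk$ oldest sub-vertices, which starts \emph{deterministically} at $X_{mk}=2mk$; one Azuma bound for this worst case, with tail probability $1/(t^3\binom{t}{k})$, is then charged to every one of the $\binom{t}{k}$ sets. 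Without this (or some substitute controlling the pre-$t_0$ history of $W$), your proof does not go through.

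Two further points would also need repair even granting the concentration step. First, your fallback for $k$ comparable to $t$ fails: $2mt\le(1+A/(mk+1))\bigl(2m\sqrt{kt}+\sqrt{8mkt}\log(et/k)\bigr)$ reduces (since $A/(mk+1)$ is negligible there) to $\sqrt{m/2}\,(1-\sqrt{k/t})\le\sqrt{k/t}\,\log(et/k)$, which at $k=t/2$ forces $m\lesssim 33$; so for large $m$ the trivial bound only covers $k/t=1-O(1/\sqrt m)$, not the whole regime you want to exclude from the Azuma argument. Second, your union-bound bookkeeping, with an ``extra $n^2$ factor'' and the requirement $\delta k\log(et/k)\gg\log n$, cannot be met when $t$ is as small as $\omega$ (which may grow arbitrarily slowly) and $k$ is small. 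The union bound has to be organized per value of $t$, as in the paper: a per-set failure probability of at most $1/(t^3\binom{t}{k})$ gives failure probability at most $t^{-2}$ for each $t$, which is summable over $t\ge\omega$; with the paper's exponent $4k\log(et/k)\ge\log(t^3\binom{t}{k})$ (using $k<t$, the case $k=t$ being handled separately) no $n$-dependent slack, and no interaction between $\delta$ and the constant $A$ at small $k$, is needed.
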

\begin{proof}
The statement is trivially true if $m=0$ since $G^n_0$ has no edges, so assume that $m\ge1$. Moreover, the case $k=t$ (which corresponds to $W=[t]$) follows immediately from the case $W=[t-1]$ (at the only expense of replacing $A$ by a slightly larger absolute constant), so we will ignore it.

Fix $k$ and $t$ such that $1\le k < t \le n$, and assume $t\ge\omega\to\infty$ as $n \to \infty$. We will need to understand the behaviour of the following random variable:
$$
Y_t (k) = \sum_{j=1}^{k} \deg(j, t).
$$
(In other words, $Y_t(k)$ is the sum of the degrees of the $k$ oldest vertices (at time $t$).)
In view of the identification between the models $G_m^n$ and $G_1^{mn}$, it will be useful to investigate the following collection of random variables: for $m k \le s \le mn$, let
$$
X_s = \sum_{j'=1}^{mk} \deg_{G_1^{t}}(j', s).
$$

Clearly, $Y_t (k) = X_{mt}$ and $X_{m k} = 2m k$. Moreover, for $mk < s \le mn$,
$$
X_s  =
\begin{cases}
X_{s-1}+1 & \text{with probability } \frac {X_{s-1}}{2s-1}, \\
X_{s-1} & \text{otherwise}.
\end{cases}
$$
The conditional expectation is given by
$$
\ex \big( X_s | X_{s-1} \big) = (X_{s-1}+1) \; \frac {X_{s-1}}{2s-1} + X_{s-1} \left(1- \frac {X_{s-1}}{2s-1} \right) = \frac {2s}{2s-1} \; X_{s-1}.
$$
Define $\widehat X_{mk} = X_{mk} = 2mk$ and
\[
\widehat X_s = \left( \prod_{i=mk+1}^s \frac {2i-1}{2i} \right) X_s
\qquad
\text{for $mk < s \le mn$,}
\]
or equivalently $\widehat X_s = c_{mk,s}\, X_s$ with $c_{mk,s}$ defined as in Lemmma~\ref{lem:aux}. We have that $\ex \big( \widehat X_s | \widehat X_{s-1} \big) = \widehat X_{s-1}$, and thus $\widehat X_{mk},\ldots,\widehat X_{mn}$ is a martingale with $\ex \widehat X_s = 2mk$.
Moreover, the difference between consecutive terms is
\[
|\widehat X_s - \widehat X_{s-1}| = c_{mk,s} \left| X_s - \left(1+ \frac {1}{2s-1} \right) X_{s-1} \right|
= c_{mk,s} \left| X_s -  X_{s-1} - \frac {X_{s-1}}{2s-1} \right| \le c_{mk,s},
\]
since $X_s -  X_{s-1} \in \{0,1\}$ and $0<\frac {X_{s-1}}{2s-1} \le \frac {mk+s-1}{2s-1} < 1$.
Also, in view of Lemma~\ref{lem:aux}, there exist absolute constants $A_1,A_2>0$ such that 
\begin{equation}\label{eq:cmkmt}
c_{mk,mt} \ge e^{-A_1/(mk+1)}\sqrt{k/t}
\qquad\text{and}\qquad
\sqrt{ 2 \sum_{i=mk+1}^{mt} {c_{mk,i}}^2 } \le e^{A_2/(mk+1)}\sqrt{2mk\log(t/k)}.
\end{equation}
Assume that constant $A$ in the statement is sufficiently large so that
$1+A/x \ge e^{(A_1+A_2)/x}$ for all $x\ge1$ (in particular, $A = e^{A_1+A_2}-1$ works), and define
\begin{align}
D &:= \big(1+A/(mk+1)\big) \left( 2m \sqrt{kt}  + \sqrt{8mkt}\log(et/k) \right)
\notag\\
& \ge e^{(A_1+A_2)/(mk+1)} \left( 2m \sqrt{kt}  + \sqrt{8mkt}\log(et/k) \right).
\label{eq:D}
\end{align}
We wish to bound the probability that $Y_t(k) > D$.
Observe that this event implies that 
\begin{align*}
&\frac{\widehat X_{mt} - 2mk}{\sqrt{2\sum_{i=mk+1}^{mt} {c_{mk,i}}^2}} = \frac{c_{mk,mt} Y_t(k) - 2mk}{\sqrt{2\sum_{i=mk+1}^{mt} {c_{mk,i}}^2}}
> \frac{c_{mk,mt} D - 2mk}{\sqrt{2\sum_{i=mk+1}^{mt} {c_{mk,i}}^2}} \ge
\\
& \ge \frac{ e^{A_2/(mk+1)} \left( 2m k  + \sqrt{8m}k\log(et/k) \right) - 2mk}{e^{A_2/(mk+1)}\sqrt{2mk\log(t/k)}} >
\\
& > \frac{ 2\sqrt{k}\log(et/k) }{ \sqrt{\log(t/k)}}
> 2\sqrt{\log \big( (et/k)^k \big)}
\ge \sqrt{\log \left( \binom{t}{k}^4 \right)}
\ge \sqrt{\log \left(t^3\binom{t}{k}\right)},
\end{align*}
where we used~\eqref{eq:cmkmt}, \eqref{eq:D} and the facts that $1\le k<t$ and $t \le \binom{t}{k}\le (et/k)^k$. Hence, applying Hoeffding-Azuma inequality~(\ref{eq:HA-inequality1}) to the martingale $\widehat X_{mk},\ldots,\widehat X_{mn}$ we get that
\[
\pr \left( Y_t(k) > D \right) \le
\pr \left( \frac{\widehat X_{mt} - 2mk}{\sqrt{2\sum_{i=mk+1}^{mt} {c_{mk,i}}^2}}  > \sqrt{\log \left(t^3\binom{t}{k}\right)} \right) \le e^{- \log \left(t^3\binom{t}{k}\right)} = 1 \bigg/ t^{3} \binom{t}{k}.
\]
Observe that for any set of vertices $W$ of size $k$, each random variable $\sum_{j'\in W} \deg_{G_1^{t}}(j', s)$ is stochastically dominated by $X_s$ (for $mk\le s\le mn$). Therefore,
\[
\pr\left (\sum_{w \in W} \deg(w,t) > D \right) \le \pr \left( Y_t(k) > D \right) \le 1 \bigg/ t^{3} \binom{t}{k}.
\]
Since there are at most ${t \choose k}$ sets $W \subseteq [t]$ of size $k$ to consider, the desired property fails for a given $t$ with probability at most $\sum_{k=1}^{t-1} t^{-3} \le  t^{-2}$. Hence, a.a.s.\ it does not fail for any $\omega \le t \le n$. As $Y_{\omega}(\omega) = 2m\omega$, the desired bound trivially holds (deterministically) for $t \le \omega$, and the proof is finished.
\end{proof}

We will also need a stronger result for sets of a certain type.

\begin{lem}\label{lem:total_weight_specific} 
Fix any constant $c\in (0,1)$ and $m\in\ent_{\ge0}$. The following property holds a.a.s.\ for $G^n_m$. For any $cn \le t \le n$, 
$$
Y_t := \sum_{w \in [cn]} \deg(w,t) \sim 2mn \sqrt{ct/n}.
$$
\end{lem}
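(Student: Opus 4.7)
My plan is to follow the martingale framework developed in the proof of Lemma~\ref{lem:total_weight}, specialized to $k=cn$. The key difference is that here $W=[cn]$ is one specific set (the oldest $cn$ vertices), so we do not need a union bound over $\binom{t}{k}$ choices of $W$. This will eliminate the $\log(et/k)$ factor present in Lemma~\ref{lem:total_weight} and upgrade the upper bound to an asymptotic equality. Concretely, identify $G_m^n$ with $G_1^{mn}$, set $k=cn$, and define $X_s = \sum_{j'=1}^{mk} \deg_{G_1^{s}}(j',s)$ for $mk \le s \le mn$. Since the first $mk$ vertices of $G_1^{mn}$ correspond under the identification to exactly the first $k$ vertices of $G_m^n$, we have the \emph{exact} identity $Y_t = X_{mt}$ for every $cn\le t\le n$ (not merely stochastic domination). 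As in the proof of Lemma~\ref{lem:total_weight}, $\widehat X_s = c_{mk,s}X_s$ is a martingale with $\widehat X_{mk} = 2mk$ and bounded differences $|\widehat X_s - \widehat X_{s-1}| \le c_{mk,s}$. Applying Lemma~\ref{lem:aux} with $a=mk=\Theta(n)$ (so $\eps = O(1/n)$) gives
$$c_{mk,mt} = (1+O(1/n))\sqrt{cn/t} \quad \text{for } cn\le t\le n, \qquad \sum_{i=mk+1}^{mn} c_{mk,i}^2 = O_c(n).$$

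The next step is to apply the supermartingale inequality~\eqref{eq:HA-inequality2} with $b_k=0$ to both $\widehat X_s - \widehat X_{mk}$ and its negation, with deviation $x = n^{2/3}$. Since $\sum c_{mk,i}^2 = O(n)$, each failure probability is at most $\exp(-\Omega(n^{1/3})) = o(1)$. Consequently, a.a.s., simultaneously for every $s$ with $mk\le s\le mn$,
$$\bigl|\widehat X_s - 2mcn\bigr| \le n^{2/3}.$$
Dividing by $c_{mk,mt}$ and using $Y_t = X_{mt}$, we obtain for every $cn\le t\le n$,
$$Y_t = \frac{2mcn + O(n^{2/3})}{(1+O(1/n))\sqrt{cn/t}} = 2mcn\sqrt{t/(cn)} + O_c(n^{2/3}) = 2m\sqrt{cnt}+o(n) = (1+o(1))\cdot 2mn\sqrt{ct/n},$$
where the last step uses that $2m\sqrt{cnt} = \Theta(n)$ on the range $cn\le t\le n$ (with constant depending on $c$). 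This is the claim.

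The main obstacle is obtaining the $o(1)$ error \emph{uniformly} over the $\Theta(n)$ values of $t\in[cn,n]$. A union bound combined with the standard Hoeffding-Azuma bound~\eqref{eq:HA-inequality1} would work too, but the supermartingale bound~\eqref{eq:HA-inequality2} provides the simultaneous control in one shot, which is the motivation for the reference to the Pittel--Wormald method in the text. A small point requiring care is that~\eqref{eq:HA-inequality2} is one-sided, so the lower tail must be handled by applying it to $-\widehat X_s$; this is legitimate since $-\widehat X_s$ is also a martingale with the same bounded differences $c_{mk,s}$.
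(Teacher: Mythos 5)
Your argument is correct, and it reaches the same conclusion by a noticeably cleaner technical route than the paper's. Both proofs work in the $G_1^{mn}$ representation with the same process $X_s$ and both invoke the supermartingale inequality~\eqref{eq:HA-inequality2}, but the implementations differ: the paper recomputes the drift via the expectation recursion, compensates it \emph{additively} by subtracting $\sum_k \sqrt{cmn/k}$ (only an approximation of the true conditional drift, valid while $X_t$ stays below $2\sqrt{tcmn}+t^{2/3}$), and therefore needs a stopping time $T$ together with a bootstrap argument showing $T=mn$ a.a.s., plus a symmetric run for the lower bound. You instead reuse the \emph{exact} multiplicative martingale $\widehat X_s=c_{mk,s}X_s$ already built in the proof of Lemma~\ref{lem:total_weight}, specialized to the single set $W=[cn]$ (so no union bound over sets, hence no $\log(et/k)$ loss), with Lemma~\ref{lem:aux} giving $c_{mk,mt}=(1+O(1/n))\sqrt{cn/t}$ and $\sum_i c_{mk,i}^2=O_c(n)$; since the compensation is exact, the stopping time disappears and the two-sided, uniform-in-$t$ bound follows directly by applying~\eqref{eq:HA-inequality2} (with $b_k=0$) to $\pm\widehat X_s$ with deviation $n^{2/3}$, which indeed yields $Y_t=2m\sqrt{cnt}+O_c(n^{2/3})=(1+o(1))\,2mn\sqrt{ct/n}$ uniformly on $cn\le t\le n$ (using $m\ge1$ so that the main term is $\Theta(n)$; the case $m=0$ is degenerate in the statement itself). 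The only points needing the usual care are floors in $\lfloor cn\rfloor$ and the fact that $Y_t=X_{mt}$ is an exact identity under the vertex identification, both of which you handle; in exchange for leaning on the Lemma~\ref{lem:total_weight}/Lemma~\ref{lem:aux} machinery, your version avoids the stopping-time self-consistency step entirely, while the paper's version is self-contained in the additive, differential-equation-method style.
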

\begin{proof}
In view of the identification between the models $G_m^n$ (on the vertex set $1, 2, \ldots, n$) and $G_1^{mn}$ (on the vertex set $1', 2', \ldots, mn'$), it will be useful to investigate the following random variable instead of $Y_t$: for $m\lfloor cn\rfloor \le t \le mn$, let
$$
X_t = \sum_{j \in [cmn]} \deg_{G_1^{t}}(j', t).
$$
Clearly, $Y_t = X_{tm}$. It follows that $X_{m\lfloor cn\rfloor} = Y_{\lfloor cn\rfloor} = 2m\lfloor cn\rfloor$. Moreover, for $m\lfloor cn\rfloor < t \le mn$,
$$
X_t =
\begin{cases}
X_{t-1}+1 & \text{with probability } \frac {X_{t-1}}{2t-1}, \\
X_{t-1} & \text{otherwise}.
\end{cases}
$$
The conditional expectation is given by
$$
\ex \left( X_t | X_{t-1} \right) = (X_{t-1}+1) \cdot \frac {X_{t-1}}{2t-1} + X_{t-1} \left(1- \frac {X_{t-1}}{2t-1} \right) = X_{t-1} \left(1+ \frac {1}{2t-1} \right).
$$
Taking expectation again, we derive that
$$
\ex {X_t} = \ex { X_{t-1} } \left(1+ \frac {1}{2t-1} \right).
$$
Hence, it follows that
$$
\ex (Y_t) = \ex ( X_{tm})  = 2m\lfloor cn\rfloor \prod_{s=m\lfloor cn\rfloor+1}^{tm}  \left(1+ \frac {1}{2s-1} \right) \sim 2cmn \left( \frac {tm}{cmn} \right)^{1/2} = 2mn \sqrt{ct/n}.
$$


In order to transform $X_t$ into something close to a martingale (to be able to apply the generalized Azuma-Hoeffding inequality~(\ref{eq:HA-inequality2})), we set for $m \lfloor cn \rfloor \le t \le mn$
$$
Z_t = X_t - 2m \lfloor cn \rfloor - \sum_{k=m \lfloor cn \rfloor + 1}^{t} \sqrt{cmn/k}
$$ 
(note that $Z_{m \lfloor cn \rfloor} = 0$) and use the following stopping time
$$
T = \min \left\{ t > m \lfloor cn \rfloor : X_t \le 2 \sqrt{t cmn} + t^{2/3} \text{ or } t = mn \right\}.
$$
Indeed, we have for $m \lfloor cn \rfloor < t \le mn$
$$
\ex \left( Z_t - Z_{t-1} ~~|~~ Z_{t-1} \right) = \frac {X_{t-1}}{2t-1} - \sqrt{cmn/t} \le (1/2+o(1)) t^{-1/3} < 0.51 t^{-1/3},
$$
provided $t \le T$, and $|Z_t - Z_{t-1}| \le 1$ as $t > cmn$.
Let $t \wedge T$ denote $\min\{t,T\}$. We apply the generalized Azuma-Hoeffding inequality~(\ref{eq:HA-inequality2}) to the sequence $(Z_{t \wedge T} : m \lfloor cn \rfloor \le t \le mn)$, with $c_t = 1$, $b_t = 0.51 t^{-1/3}$ and $x = 0.1 t^{2/3}$, to conclude that a.a.s.\ for all $t$ such that $m \lfloor cn \rfloor \le t \le mn$
$$
Z_{t \wedge T} - Z_{m \lfloor cm \rfloor} = Z_{t \wedge T} \le  \sum_{k\le t} b_k + x \le 0.77 t^{2/3} + 0.1 t^{2/3} \le 0.9 t^{2/3}.
$$

To complete the proof we need to show that a.a.s., $T = mn$. The events asserted by the equation hold a.a.s.\ up until time $T$, as shown above. Thus, in particular, a.a.s.
\begin{eqnarray*}
X_T &=& Z_T + 2m\lfloor cn \rfloor + \sum_{k=m \lfloor cn \rfloor + 1}^{t} \sqrt{cmn/k} \\
&\le& 0.9 t^{2/3} + 2mcn + \sqrt{cmn} \int_{mcn}^{t} 1/\sqrt{k} \; dk + O(1) \\
&<& 2\sqrt{tcmn} + t^{2/3},
\end{eqnarray*}
which implies that $T = mn$ a.a.s. In particular, it follows that a.a.s.,for any $cn \le t \le n$, $Y_{t} = X_{mt} < 2 mn \sqrt{ct/n} + o(n)$. The lower bound can be obtained by applying the same argument symmetrically to $(-Z_{t \wedge T} : m \lfloor cn \rfloor \le t \le mn)$, and so the proof is finished.
\end{proof}

Finally, we provide the last tool we need in order to extend the arguments in the paper from $G^n_{m-\old}$ to $\pi_\sigma\big(G^n_{m_1,m_2}\big)$ (and thus to $G^n_m$ if $m_1$ or $m_2$ equals $0$).
\begin{lem}\label{lem:extra}
Let $C>0$ be a sufficiently large constant.
Fix any constants $m,m_1,m_2\in\ent_{\ge0}$ with $m=m_1+m_2 \ge 1$ and let $\sigma\in\{1,2\}$.
Let $\omega=\omega(n)$ be any function tending to infinity as $n \to \infty$, and suppose that the a.a.s.\ events in Lemma~\ref{lem:total_weight} hold in
$G^n_m = \pi\big(G^n_{m_1,m_2}\big)$. Then,
for any $\omega\le j\le n$, $R \subseteq [n]\setminus[j]$ and $\emptyset \ne Q\subseteq[n]$, the conditional probability that every vertex $v\in R$ has all neighbours in $Q$ with respect to graph $\pi_\sigma\big(G^n_{m_1,m_2}\big)$ is at most
\[
\left( \left(1+\tfrac{C}{|Q|}+o(1) \right) \left( 2 \sqrt{\tfrac{|Q|}{j}}  + \sqrt{\tfrac{8|Q|}{mj}}\log\left(e\left(1+\tfrac{j}{|Q|}\right)\right)  \right) \right)^{m_\sigma r}.
\]
\end{lem}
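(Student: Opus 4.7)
The plan is to process the $m_\sigma r$ edges of $\pi_\sigma(G^n_{m_1,m_2})$ that stem from vertices of $R$ in the order dictated by the generation process, bound the conditional probability that each has its ``other'' endpoint in $Q\cup\{v\}$ (where $v\in R$ is the stemming vertex; self-loops are included because they do not affect $N(v)$) by the parenthesized factor in the conclusion, and multiply these per-edge bounds via the chain rule. In this way I will control $\pr[A\cap E_0]$, where $A$ is the event of the lemma and $E_0$ denotes the assumed event from Lemma~\ref{lem:total_weight}. Since $\pr[E_0]=1-o(1)$, dividing by $\pr[E_0]$ to obtain $\pr[A\mid E_0]$ contributes only a $1+o(1)$ factor that is absorbed into the $o(1)$ term of the stated bound (or into $C$).

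Fix an individual edge $e$ in position $i\in[m]$ of $v\in R$, and condition on any history $H$ of the process up to the moment just before $e$'s non-$v$ endpoint is exposed, assuming $H$ is consistent with $E_0$. By the preferential attachment rule,
\[
\pr\bigl[\text{the non-$v$ endpoint of $e$ lies in }Q\cup\{v\}\,\big|\,H\bigr]=\frac{D}{2mv-2m+2i-1},
\]
where $D$ is the current total degree of the allowed target set $(Q\cap[v-1])\cup\{v\}$. Since at most $m-1$ edges of $v$ have been placed by this point, $D\le\deg_{G^{v-1}_m}(Q\cap[v-1])+O(m)$. Setting $k:=|Q\cap[v-1]|\le|Q|$ and noting $t=v-1\ge j\ge\omega$, the assumed event of Lemma~\ref{lem:total_weight} yields
\[
\deg_{G^{v-1}_m}(Q\cap[v-1])\le\bigl(1+\tfrac{A}{mk+1}\bigr)\bigl(2m\sqrt{k(v-1)}+\sqrt{8mk(v-1)}\log(e(v-1)/k)\bigr).
\]
Dividing the displayed probability by $2mv-O(m)\sim 2mv$ and absorbing the $O(1/v)$ corrections as $o(1)$ (valid since $v\ge j\to\infty$), the per-edge conditional probability is at most
\[
\bigl(1+\tfrac{A}{mk+1}+o(1)\bigr)\Bigl(\sqrt{\tfrac{k}{v}}+\sqrt{\tfrac{2k}{mv}}\log\tfrac{ev}{k}\Bigr).
\]

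The main remaining task---and also the principal technical obstacle---is verifying that this per-edge expression is dominated, uniformly over $v\ge j+1$ and $0\le k\le|Q|$, by $(1+C/|Q|+o(1))\bigl(2\sqrt{|Q|/j}+\sqrt{8|Q|/(mj)}\log(e(1+j/|Q|))\bigr)$ for a sufficiently large absolute constant $C$. The case $k=0$ is trivial ($D=O(m)$, so the probability is $O(1/v)$, much smaller than the claimed bound). For $k\ge 1$ one uses $\sqrt{k/v}\le\sqrt{|Q|/j}$ together with the factor of $2$ in the leading term to absorb the prefactor $1+A/(mk+1)$, handling the regime $|Q|$ bounded (where $C/|Q|$ is large) separately from $|Q|$ large (where $\sqrt{k/v}$ is close to $\sqrt{|Q|/j}$ only when $k$ is comparable to $|Q|$, and then $A/(mk+1)=O(1/|Q|)$). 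For the secondary term, the function $v\mapsto v^{-1/2}\log(ev/k)$ is decreasing on $[ek,\infty)$, so its maximum over $v\ge j+1$ is attained at $v=\max(j+1,\lceil ek\rceil)$; a short case analysis yields the required inequality, with the feature $\log(e(1+j/|Q|))\ge 1$ handling the regime $j<|Q|$ in which $\log(ev/k)$ could otherwise be too small. Once this uniform per-edge estimate is established, multiplying over all $m_\sigma r$ edges by chain-rule conditioning produces the required upper bound on $\pr[A\cap E_0]$, and the $1/\pr[E_0]$ slack is absorbed into the $o(1)$ of the stated bound.
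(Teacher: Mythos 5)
Your proposal is correct and takes essentially the same route as the paper: you reduce the lemma's event to ``every edge stemming from an $R$-vertex lands in $Q$ (or is a loop)'', condition sequentially only on the past so that the degree bound of Lemma~\ref{lem:total_weight} at time $v-1\ge j\ge\omega$ can be invoked for each attachment, multiply the per-edge bounds, and pass from $\pr[A\cap E_0]$ to the conditional probability using $\pr[E_0]=1-o(1)$, exactly as the paper does with its events $E_{t_i}$, $H_{t_i}$ and $\widehat H_{t_i}$. The only deviation is organizational: where the paper replaces $(|Q\cap[t]|,t)$ by $(|Q|,j)$ beforehand via Claim~\ref{c:f} and the monotonicity of $\log(e(1+x))/\sqrt{x}$, you defer this to a final uniform-domination case analysis, which does go through (with extra room, since your denominator $2m(v-1)$ is sharper by a factor of $2$ than the paper's $m(t_i-1)$) and is sketched at roughly the same level of detail as the paper's own proof of Claim~\ref{c:f}.
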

\begin{proof}
Put $|R|=r$ and $|Q|=q\ge 1$. Since the lemma is trivial for $m_\sigma r=0$, we will assume that $m_\sigma\ge1$ and $r\ge1$.
For $t,m,x\ge1$, define
\[
f_{t,m}(x) = 2m \sqrt{xt}  + \sqrt{8mxt}\log\left(e\left(1+t/x\right)\right).
\]
We will use the following observation that we will be proved at the very end.
\begin{claim}\label{c:f}
Given any constant $A>0$, there exists $A'>0$ sufficiently large such that for every $t,m\ge1$ and every $1\le x\le y$, $(1+A/x)f_{t,m}(x) \le (1+A'/y)f_{t,m}(y)$.
\end{claim}
Let $H$ denote the a.a.s.\ event in the statement of Lemma~\ref{lem:total_weight}, and let $A>0$ be the constant in that same lemma. 
For each $t\in[n]$, define $W_t = Q\cap[t]$, so in particular we have $|W_t|\le q$. In view of Claim~\ref{c:f}, event $H$ implies that, for every $t\in\nat$ such that $j\le t\le n$ and $W_t\ne\emptyset$, 
\begin{align*}
\sum_{w\in W_t} \deg(w,t)
&\le \left(1+\frac{A}{m|W_t|+1}\right) \left( 2m \sqrt{|W_t|t}  + \sqrt{8m|W_t|t}\log\left(et/|W_t|\right) \right)
\\
&\le \left(1+\frac{A}{|W_t|}\right) f_{t,m}(|W_t|)
\\
&\le \left(1+\frac{A'}{q}\right) f_{t,m}(q).
\end{align*}
(The last conclusion of the equation above is also true if $W_t=\emptyset$, so we can ignore the restriction we had on $W_t$. Moreover, let us mention that $|W_t| \le t$ but it is not always true that $q \le t$. This technical reason prevented us from defining  function $f_{t,m}(x)$ as follows: $f_{t,m}(x) = 2m \sqrt{xt}  + \sqrt{8mxt}\log\left(et/x\right).$)
For each $t\in[n]$ such that $t\ge j+1$, let $H_t$ be the event in $G^n_m = \pi\big(G^n_{m_1,m_2}\big)$
that, for every $j\le s\le t-1$, we have
\begin{equation*}
\sum_{w\in W_s} \deg(w,s)\le \left(1+\frac{A'}{q}\right) f_{s,m}(q).
\end{equation*}
By construction, $H_{j+1} \supseteq H_{j+2} \supseteq \cdots H_n \supseteq H$.
For each $t\in[n]$, let $E_t$ be the event that every edge of $\pi_\sigma\big(G^n_{m_1,m_2}\big)$ stemming from vertex $t$ attaches this vertex to some vertex  in $Q$. Our goal is to bound $\pr \left( \bigcap_{t\in R}E_t \cap H \right)$. By labelling the $r$ vertices in $R$ as $t_1<t_2<\cdots<t_r$ and since $H_{j+1} \supseteq H_{j+2} \supseteq \cdots H_n \supseteq H$, we conclude that
\begin{equation}\label{eq:PEH}
\pr \left( \bigcap_{t\in R}E_t \cap H \right) \le
\pr \left( \bigcap_{i=1}^r \left( E_{t_i} \cap  H_{t_{i+1}} \right) \right) \le
\prod_{i=1}^r \pr \left( E_{t_i}  \;\big|\; \widehat H_{t_i} \right),
\end{equation}
where $\widehat H_{t_i} := H_{t_i} \cap \bigcap_{k=1}^{i-1} E_{t_k}$. Observe that event $\widehat H_{t_i}$
implies that
\begin{align*}
\sum_{w\in Q \cap[t_i-1]} \deg(w,t_i-1) \le \left(1+\frac{A'}{q}\right) f_{t_i-1,m}(q),
\end{align*}
and (crucially!) only exposes information concerning edges created before time $t_i$, so the probability of $E_{t_i}$ conditional upon $\widehat H_{t_i}$ is at most
\begin{align}
\left( \frac{\left(1+\frac{A'}{q}\right) f_{t_i-1,m}(q) + 2m}{m(t_i-1)}\right)^{m_\sigma}
&\le \left( \left(1+\tfrac{A'}{q}\right) \left( 2 \sqrt{\tfrac{q}{t_i-1}}  + \sqrt{\tfrac{8q}{m(t_i-1)}}\log\left(e\left(1+\tfrac{t_i-1}{q}\right)\right)  \right) + \tfrac{2}{t_i-1}\right)^{m_\sigma}
\notag\\
&\le \left( \left(1+\tfrac{A'}{q}\right) \left( 2 \sqrt{\tfrac{q}{j}}  + \sqrt{\tfrac{8q}{mj}}\log\left(e\left(1+\tfrac{j}{q}\right)\right)  \right) + \tfrac{2}{j}\right)^{m_\sigma},
\label{eq:PEH2}
\end{align}
where we used that $t_i-1\ge j$ (since $t_i\in R$) and the fact that $\frac{\log(e(1+x))}{\sqrt x}$ is decreasing with respect to $x$ in $(0,+\infty)$ (which follows from elementary analysis). Hence, setting $C=A'+1$, we obtain
\begin{equation}\label{eq:PEH3}
\pr \left( E_{t_i}  \;\big|\; \widehat H_{t_i} \right)
\le \left( \left(1+\tfrac{C}{q}\right) \left( 2 \sqrt{\tfrac{q}{j}}  + \sqrt{\tfrac{8q}{mj}}\log\left(e\left(1+\tfrac{j}{q}\right)\right)  \right) \right)^{m_\sigma},
\end{equation}
where the case $q\le j$ follows immediately from~\eqref{eq:PEH2} (since $2/j \le \frac{1}{q} 2\sqrt{q/j}$), and the case $q\ge j$ is trivially true (since the right-hand side of~\eqref{eq:PEH3} is greater than~1).
Combining~\eqref{eq:PEH} and~\eqref{eq:PEH3} together and using the fact that $H$ holds a.a.s., we conclude that
\begin{align*}
\pr \left( \bigcap_{t\in R}E_t \;\Big|\; H \right)
&= (1+o(1)) \pr \left( \bigcap_{t\in R}E_t \cap H \right)
\\
&\le \left( \left(1+\tfrac{C}{q}+o(1) \right) \left( 2 \sqrt{\tfrac{q}{j}}  + \sqrt{\tfrac{8q}{mj}}\log\left(e\left(1+\tfrac{j}{q}\right)\right)  \right) \right)^{m_\sigma r},
\end{align*}
which yields the statement of the lemma under the assumption that Claim~\ref{c:f} is valid.

Finally, we proceed to prove Claim~\ref{c:f}. We will only sketch the main steps in the argument and leave the details to the reader. We may increase $A$ if needed and assume it is a sufficiently large constant (independent of $t$ and $m$). Take $A'=A^5$. For each $t,m,x\ge1$, define $g_{t,m}(x)=(1+A/x)f_{t,m}(x)$. Elementary (but rather tedious) computations show that
\[
\frac{d}{dx} f_{t,m}(x)\ge0
\quad \text{for all $x\ge1$}
\quad\qquad\text{and}\qquad\quad
\frac{d}{dx} g_{t,m}(x)\ge0
\quad \text{for all $x\ge 7A \ge \tfrac{\sqrt{m/2}+1+3\frac{t}{t+x}}{\sqrt{m/2}+1-\frac{t}{t+x}} A$.}
\]
Fix any $t,m\ge1$ and any $1\le x\le y$. If $x\ge 7A$, then trivially
\[
g_{t,m}(x) \le g_{t,m}(y) \le (1+A'/y)f_{t,m}(y),
\]
as desired. Also, if $y\le A^4$,
\[
g_{t,m}(x) \le (1+A)f_{t,m}(x) \le (1+A)f_{t,m}(y) \le (1+A^5/y)f_{t,m}(y).
\]
Otherwise, suppose $x\le 7A$ and $y\ge A^4$. Then, assuming that $A$ is a large enough constant, we get
\[
g_{t,m}(x) \le (1+A)f_{t,m}(x) \le (1+A)f_{t,m}(7A) \le (1+A)\sqrt{7A}\left(2m\sqrt{t}+\sqrt{8mt}\log(et)\right)
\le f_{t,m}(A^4),
\]
where the last step follows from simple computations by considering separately the cases $t\le A^5$ and $t\ge A^5$. Hence,
\[
g_{t,m}(x) \le f_{t,m}(A^4) \le f_{t,m}(y) \le (1+A'/y)f_{t,m}(y),
\]
the proof of the claim is completed, and so the proof is finished.
\end{proof}

\section{Upper bound}\label{sec:upper}

\subsection{Expansion properties}\label{ssec:expansion}

Let us start with investigating some properties of $G^n_{m-\old}$ and $G^n_m$ that will turn out to be important in determining the existence of perfect matchings and Hamiltonian cycles.
All the results of this section are stated for $G^n_{m-\old}$ and $\pi_1\big(G^n_{2m,m'}\big)$ where $m\in\nat$ and $m'\in\ent_{\ge0}$, so they also apply to
$G^n_{2m}$ (which can be obtained from $\pi_1\big(G^n_{2m,m'}\big)$ by setting $m'=0$).
First we will need the following technical lemma.

\begin{lem}\label{lem:goodold}
Fix any constants $0<x,y,d,\alpha<1$, $m\in\nat$, and $m'\in\ent_{\ge0}$ satisfying
\[
(1-d)\varphi(-x)m-d>0
\qquad\text{and}\qquad
\alpha < \left( \frac {0.99y}{e} \right)^{ 1 / ((1-d)\phi(-x)m-d) },
\]
where $\varphi$ is defined in~\eqref{eq:phidef}.
Then, the following holds a.a.s.\ for $G^n_{m-\old}$ and for $\pi_1 (G^n_{2m,m'})$.
For any integer $k$ with $1\le k\le \alpha n$ and $j= \lfloor k(n/k)^d \rfloor$, there are at most $yk$ vertices in $[j]$ with fewer than $(1-x)m\log(n/j)$ neighbours in $[n]\setminus [j]$.
\end{lem}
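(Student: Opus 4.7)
The plan is a first-moment union bound over levels $k$ and subsets $S\subseteq[j]$ of size $s:=\lceil yk\rceil$. If the lemma fails at level $k$ then there is such an $S$ with \emph{every} $v\in S$ bad, in which case the total edge count $E_S$ between $[n]\setminus[j]$ and $S$ is less than $s(1-x)m\log(n/j)$, well below its natural mean $\sim ms\log(n/j)$. For $G^n_{m-\old}$, $E_S$ is a sum of independent Bernoullis (one per edge slot of each $w>j$, with probability $|S|/(w-1)$) of mean $\mu_S\sim ms\log(n/j)$, so the strong Chernoff bound~\eqref{eq:strongChernoff2} yields $\pr(\text{all of }S\text{ bad})\le(j/n)^{(1-o(1))m\varphi(-x)s}$.

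For $\pi_1(G^n_{2m,m'})$ I would apply Lemma~\ref{lem:bound_no_edges} once per $w>j$ with $W=S$: letting $T_w:=\mathbf{1}\{\text{some blue edge of }w\text{ hits }S\}$, it gives $\pr(T_w=0\mid\text{edges from }[w-1])\le(1-|S|/(2w))^{2m}$, and a standard $w$-by-$w$ coupling stochastically dominates $\sum_w T_w$ from below by a sum of independent Bernoullis whose total mean remains $(1-o(1))ms\log(n/j)$. The doubling to $2m$ blue edges in the statement precisely cancels the factor $2$ in the denominator $2w$, restoring the uniform-case mean; Chernoff then yields the same bound.

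Multiplying by $\binom{j}{s}\le(ej/s)^s$, substituting $j=\lfloor k(n/k)^d\rfloor$ and using $s\ge yk$, the failure probability at level $k$ becomes
\[
\pr(A_k)\le\left(\frac{e}{y}(k/n)^{\beta-o(1)}\right)^s,\qquad \beta:=(1-d)m\varphi(-x)-d>0,
\]
and the hypothesis $\alpha<(0.99y/e)^{1/\beta}$ forces the base to be at most $0.99$ for all $k\le\alpha n$. A routine estimate then gives $\sum_{k=1}^{\alpha n}\pr(A_k)=O(n^{-\beta})=o(1)$: the $\lceil 1/y\rceil$ terms with $k\le 1/y$ (and $s=1$) contribute $O(n^{-\beta})$, while for $k>1/y$ convexity of $s\mapsto s\log((e/y)(k/n)^\beta)$ makes the bound decay fast enough with $k$ that the sum is dominated by its smallest $s=2$ contribution $O(n^{-2\beta})$.

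The main technical obstacle is the preferential case, since edges there are not independent and Lemma~\ref{lem:bound_no_edges} must be used to decouple them. The delicate point is to ensure that the dominating Bernoullis' total mean meets the target uniformly in $k$ and $s$: in the ``crowded'' regime $s\gtrsim j/m$ the Taylor approximation $1-(1-s/(2w))^{2m}\approx ms/w$ breaks down, but there each $p_w$ is $\Theta(1)$ rather than $\Theta(s/w)$, so $\mu$ is in fact even larger than $ms\log(n/j)$ and Chernoff's bound only improves. The factor $0.99$ in the hypothesis on $\alpha$ provides the small amount of slack needed to absorb the remaining $(1-o(1))$ corrections.
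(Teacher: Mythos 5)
Your overall first-moment strategy (union bound over $k$ and over sets $S\subseteq[j]$ of size $s=\lceil yk\rceil$, a lower-tail Chernoff estimate for an aggregate count, then a geometric summation over $k$) is essentially the paper's, which instead runs the Chernoff bound \emph{per vertex} and multiplies the $\lceil yk\rceil$ per-vertex probabilities using negative correlation. But your execution has a genuine gap, exactly at the point you flag as delicate. In the preferential case you count $\sum_{w>j}T_w$ with $T_w=\mathbf{1}\{\text{some blue edge of }w\text{ hits }S\}$, whose mean is $\sum_{w>j}\bigl(1-(1-s/(2w))^{2m}\bigr)$. By Bernoulli's inequality, $1-(1-u)^{2m}\le 2mu$, so this mean is always \emph{at most} $ms\log(n/j)$; your claim that in the crowded regime $s\gtrsim j/m$ the mean becomes ``even larger than $ms\log(n/j)$'' is backwards. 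And the crowded regime genuinely occurs within the lemma's range: for $k=\Theta(n)$ one has $s/j\approx y(k/n)^d$, so $ms/j$ is a constant of order one (roughly $0.8$ for the constants in Lemma~\ref{lem:constants1}(a), roughly $0.3$ for Lemma~\ref{lem:constants2}(c)), and the saturation loss of $1-e^{-ms/w}$ against $ms/w$ consumes essentially the whole allowed deficit $x$ (and all of it if $x$ is small, which the lemma permits). The mean of $\sum_w T_w$ can then fall to or below the threshold $s(1-x)m\log(n/j)$, so the lower-tail bound is vacuous, or at best far too weak to beat the $\binom{j}{s}$ factor, precisely for the largest $k$. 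The paper avoids this by keeping the count per vertex: applying Lemma~\ref{lem:bound_no_edges} with $W=\{v\}$ gives per-step probabilities $\sim m/t$, which never saturate, and the dependence across the $\lceil yk\rceil$ vertices is handled by negative correlation rather than by aggregating into one sum.

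There is also a smaller slip in the uniform case: your $E_S$ counts edge \emph{slots} (one Bernoulli per choice of each $w>j$), i.e.\ multigraph edges, while badness of a vertex concerns its number of \emph{distinct} neighbours. A bad vertex may receive several parallel edges from the same $w$, so ``all of $S$ bad'' does not imply $E_S<s(1-x)m\log(n/j)$, and the deterministic implication feeding your Chernoff step fails as stated. The natural repair is the adjacency count $\sum_{w>j}|N(w)\cap S|=\sum_{v\in S}X_v$, which the bad event does control and whose mean is the right one; but its summands are no longer independent (only negatively correlated across $v$ for each fixed $w$), so one must invoke a lower-tail bound for negatively correlated indicators --- which is, in substance, the paper's per-vertex-plus-negative-correlation route rather than the independent edge-slot computation you describe.
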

\begin{proof}
Let us start with showing the desired property for $G^n_{m-\old}$. Pick a constant $A<\phi(-x)$ but sufficiently close to $\phi(-x)$ so that the following properties are satisfied:
\begin{equation}\label{eq:Aconstant}
(1-d)Am-d>0
\qquad\text{and}\qquad
\alpha < \left( \frac {0.99y}{e} \right)^{ 1 / ((1-d)Am-d) }.
\end{equation}
Let us concentrate on any $k$ in the range of consideration; that is, $1\le k\le \alpha n$. Let $X_v$ be the number of neighbours in $[n] \setminus [j]$ of a given vertex $v\in [j]$. By~\eqref{eq:m-old} (and the claim below), $X_v$ is a sum of independent Bernoulli random variables with parameter 
$$
1 - \left( 1 - \frac {1}{t-1} \right)^m \sim \frac {m}{t}
$$ 
for $t\in[n]\setminus[j]$ (note that $j \gg 1$). Hence, we get that 
\[
\ex [X_v] \sim \sum_{t=j+1}^{n}\frac{m}{t} \sim m\log(n/j).
\]
By the generalization of the Chernoff bound~(\ref{eq:strongChernoff2}) we get that the probability of $v$ having less than $(1-x)m\log(n/j)$ neighbours in $[n] \setminus [j]$ is at most 
\[
\exp\Big( - (1+o(1)) m\log(n/j)\varphi(-x) \Big) < (j/n)^{Am},
\]
since $A<\phi(-x)$. We will call such a vertex \emph{bad}. Using the fact that the events that two or more vertices are bad are negatively correlated, the probability of having at least $yk$ bad vertices in $[j]$ is at most
\[
\binom{j}{\lceil yk\rceil} (j/n)^{Am \lceil yk\rceil} 
\leq \left(\frac{ej(j/n)^{Am}}{\lceil yk\rceil}\right)^{\lceil yk\rceil}
\leq \left(\frac{e}{y} \left(\frac{k}{n}\right)^{(1-d)Am - d} \right)^{yk}
=: a_k.
\]
Note that, if $k+1 \le \eps n$ for a small constant $\eps > 0$,
\begin{align}
\frac{a_{k+1}}{a_k} 
&= \left(\frac{k+1}{k}\right)^{yk((1-d)Am-d)}
     \left(\frac{e}{y}\left(\frac{k+1}{n}\right)^{(1-d)Am-d}\right)^y
\notag\\
&\leq \exp\left(y((1-d)Am-d)\right) \left(\frac{e}{y}\right)^y \eps^{y((1-d)Am-d)}
\notag\\
&= \left(\frac{e}{y}\right)^y \exp\of{y((1-d)Am-d)(1+\log\eps)}<0.99,
\label{eq:aratio}
\end{align}
by the first inequality in~\eqref{eq:Aconstant} and provided that $\eps$ is sufficiently small.
Hence, $\sum_{k=1}^{\lfloor\eps n\rfloor} a_k = O(a_1) = o(1)$. On the other hand, 
\[
\sum_{k = \lfloor\eps n\rfloor+1}^{\lfloor\alpha n\rfloor} a_k 
\le \sum_{k = \lfloor\eps n\rfloor+1}^{\lfloor\alpha n\rfloor} \left(\frac{e}{y} \alpha^{(1-d)Am-d}\right)^{yk} \leq \sum_{k = \lfloor\eps n\rfloor+1}^{\lfloor\alpha n\rfloor} 0.99^{yk} = O(0.99^{\eps yn}) = o(1),
\]
by the second inequality in~\eqref{eq:Aconstant}.
It follows that the probability that the desired property does not hold for some $k$ is $o(1)$. The desired property holds for $G^n_{m-\old}$. 

\medskip

Adjusting the proof for $\pi_1 (G^n_{2m,m'})$ is straightforward, by using Lemma~\ref{lem:bound_no_edges} (with $\sigma=1$, $m_1=2m$ and $m_2=m'$) instead of~\eqref{eq:m-old} that we used above.
The only difference is that now $X_v$ is stochastically lower bounded by a sum of independent Bernoulli random variables with parameter 
$$
1 - \left( 1 - \frac {1}{2t} \right)^{2m} \sim \frac {m}{t}
$$ 
for $t\in[n]\setminus[j]$. It follows that $\ex [X_v] \ge (1+o(1)) m \log (n/j)$, and the rest of the proof continues as before.
\end{proof}

\bigskip

Next, we will use Lemma~\ref{lem:goodold} to derive an expansion property of $G^n_{m-\old}$ and $\pi_1 (G^n_{2m,m'})$ that will play a key role in the argument. We will show that for all sets of vertices $K$ of moderate size, $N(K)$ is large. (Recall the definition of $N(K)$ from Section~\ref{ssec:notation}.)

\begin{lem}\label{lem:expansion}
Let $\ell\in\{1,2\}$. Fix any constants $0<x,y,z,d,\alpha<1$ and $m\in\nat$ satisfying
\begin{gather}
y < z < 1 - \frac{\ell+1}{dm},
\label{eq:zrange}
\\
(1-d)\varphi(-x)m-d>0,
\label{eq:dphim}
\end{gather}
and
\begin{subnumcases}{\label{eq:alphabound}\alpha <}
\left(\frac{0.99y}{e}\right)^{\frac{1}{(1-d)\phi(-x)m-d }}
\label{eq:alphabound1}
\\
\exp\left( - \frac {\ell+1-z}{(1-x)(1-d)(z-y)} \right)
\label{eq:alphabound2}
\\
\frac{1}{\ell+1}  - \frac{1}{d(1-z)m}
\label{eq:alphabound3}
\\
\exp\left( - \dfrac{ (1-z)m  \log(\ell+1) + \ell+1 - \ell\log\ell }{d (1-z)m  - \ell-1} \right).
\label{eq:alphabound4}
\end{subnumcases}
Then, the following holds a.a.s.\ for $G^n_{m-\old}$. 
Every set of vertices $K$ with $1\le |K|\le \alpha n$ satisfies $|N(K)|\ge \ell |K|$.

Moreover, suppose that we can replace~\eqref{eq:alphabound3} and~\eqref{eq:alphabound4} by the following stronger conditions
%
%
\newcounter{my_counter}
\renewcommand\theequation{\ref{eq:alphabound}$'$\alph{my_counter}}
\begin{numcases}{\alpha <}
\setcounter{my_counter}{3}
\frac{1}{\ell+1}  - \frac{8/3}{d(1-z)m}
\label{eq:alphabound5}
\\
\stepcounter{my_counter}
\exp \left( - \frac{(1-z)m \log \left((2 + w)^2(\ell+1) \right)  + \ell + 1 - \ell\log\ell)}{d(1-z)m  - (\ell+1)} \right).
\label{eq:alphabound6}
\end{numcases}
%
%
\addtocounter{equation}{-2}
\renewcommand\theequation{\arabic{equation}}
where 
\begin{equation}\label{eq:wbound}
w = \sqrt{\frac{8}{m}}\log\left(e\left(1+\frac{1}{(\ell+1)\alpha^d}\right)\right).
\end{equation}
Then, the previous a.a.s.\ statement is also a.a.s.\ valid for $\pi_1 (G^n_{2m,m'})$ (for any $m'\in\ent_{\ge0}$).
\end{lem}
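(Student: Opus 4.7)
The plan is a union-bound argument by contradiction: suppose $K\subseteq[n]$ has $|K|=k\le\alpha n$ and $|N(K)|<\ell k$; set $S:=K\cup N(K)$, so $|S|<(\ell+1)k$ and every edge incident with $K$ has its other endpoint in $S$. Introduce the cutoff $j:=\lfloor k(n/k)^{d}\rfloor$ (so $\log(n/j)=(1-d)\log(n/k)+O(1)$), split $K=K_1\cup K_2$ with $K_1:=K\cap[j]$, $K_2:=K\setminus[j]$, and consider the two exhaustive cases \textbf{(A)}~$|K_1|\ge zk$ and \textbf{(B)}~$|K_2|\ge(1-z)k$.

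In Case~(A), condition~\eqref{eq:alphabound1} allows Lemma~\ref{lem:goodold} to produce at least $(z-y)k$ ``good'' vertices of $K_1$ (those outside the set of $yk$ bad ones), each with at least $(1-x)m\log(n/j)$ distinct neighbours in $[n]\setminus[j]\subseteq S\setminus[j]$. Since $|S\setminus[j]|\le(\ell+1-z)k$ and each vertex of $[n]\setminus[j]$ can stem at most $m$ edges (for $G^n_{m-\old}$), or contribute at most $2m$ distinct older neighbours of $K_1$ (for $\pi_1(G^n_{2m,m'})$; Lemma~\ref{lem:goodold} counts distinct neighbours, absorbing the apparent factor of $2$), double-counting gives $(z-y)(1-x)\log(n/j)\le\ell+1-z$; substituting $\log(n/j)=(1-d)\log(n/k)+O(1)$ and $k\le\alpha n$, condition~\eqref{eq:alphabound2} yields the contradiction.

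In Case~(B) for uniform attachment, \eqref{eq:m-old} and the independence of edges across vertices in $G^n_{m-\old}$ give $\pr\le\prod_{v\in K_2}(|S|/(v-1))^{m}\le\bigl((\ell+1)(k/n)^{d}\bigr)^{m(1-z)k}$, using $v-1\ge j$ for $v\in K_2$ together with $k/j=(1+o(1))(k/n)^{d}$. Multiplying by $\binom{n}{k}\binom{n-k}{<\ell k}$ collapses the union bound to
\[
a_k:=\Bigl[\tfrac{e^{\ell+1}(\ell+1)^{m(1-z)}}{\ell^\ell}\,(k/n)^{d(1-z)m-(\ell+1)}\Bigr]^k,
\]
where \eqref{eq:zrange} guarantees that the exponent of $k/n$ is strictly positive. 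Condition~\eqref{eq:alphabound4} encodes $a_{\lfloor\alpha n\rfloor}<1$, while~\eqref{eq:alphabound3} drives a ratio estimate $a_{k+1}/a_k<0.99$ on an initial range $k\le\eps n$ (mirroring~\eqref{eq:aratio}); together these yield $\sum_k a_k=o(1)$. For the preferential graph $\pi_1(G^n_{2m,m'})$ we replace independence by Lemma~\ref{lem:extra} applied with $\sigma=1$, $m_\sigma=2m$, $R=K_2$, $Q=S$ (its a.a.s.\ hypothesis holds because $\pi(G^n_{2m,m'})$ is distributed as $G^n_{2m+m'}$, to which Lemma~\ref{lem:total_weight} applies). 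Using $|S|/j\le(\ell+1)(k/n)^{d}$ and bounding $\log(e(1+j/|S|))$ by the quantity defining $w$ in~\eqref{eq:wbound}, the per-vertex bound from Lemma~\ref{lem:extra} becomes $\bigl[(1+o(1))(2+w)\sqrt{(\ell+1)(k/n)^{d}}\bigr]^{2m}$; squaring and multiplying over $K_2$ reproduces the uniform-case union bound with $(\ell+1)$ replaced by $(2+w)^{2}(\ell+1)$, so that~\eqref{eq:alphabound5}--\eqref{eq:alphabound6} take over the roles of~\eqref{eq:alphabound3}--\eqref{eq:alphabound4}.

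The main technical obstacle is precisely this last preferential step: the $w$-term of Lemma~\ref{lem:extra} and the multiplicative $1+C/|S|+o(1)$ error (which is not automatically $o(1)$ for small $k$, since $|S|$ may itself be small) must be propagated through the entire union bound without destroying the ratio estimate, and this is exactly what forces the slightly stronger constants appearing in~\eqref{eq:alphabound5}--\eqref{eq:alphabound6}.
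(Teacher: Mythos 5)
Your decomposition is exactly the paper's: the same cutoff $j=\lfloor k(n/k)^d\rfloor$, the same split into sets with at least $zk$ vertices in $[j]$ (handled by Lemma~\ref{lem:goodold} plus a bipartite degree count) and sets with at least $(1-z)k$ vertices outside $[j]$ (handled by a union bound via~\eqref{eq:m-old}, resp.\ Lemma~\ref{lem:extra}). For $G^n_{m-\old}$ your sketch essentially reproduces the paper's proof, up to a mis-attribution of the hypotheses: the $0.99$ ratio for $k\le\eps n$ comes from positivity of $d(1-z)m-(\ell+1)$, i.e.\ from~\eqref{eq:zrange}, while~\eqref{eq:alphabound3} is what makes $f''(\rho)>0$ on $(0,\alpha]$ so that the terms with $\eps n\le k\le\alpha n$ can be reduced to the two endpoints; ``ratio $<0.99$ up to $\eps n$ plus $a_{\lfloor\alpha n\rfloor}<1$'' is not by itself a complete argument for the middle range (here it can be patched because the base of your $a_k$ is monotone in $k$, but that needs to be said).

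The genuine gap is in the preferential Case~(B). The factor $\log\big(e\big(1+\tfrac{j}{|Q|}\big)\big)\approx\log\big(e\big(1+\tfrac{(n/k)^d}{\ell+1}\big)\big)$ produced by Lemma~\ref{lem:extra} is decreasing in $k$, and $w$ in~\eqref{eq:wbound} is precisely its value at $k=\lfloor\alpha n\rfloor$; hence for $k<\alpha n$ replacing this factor by $w$ gives a lower bound, not an upper bound, and the claim that the union bound ``reproduces the uniform case with $(\ell+1)$ replaced by $(2+w)^2(\ell+1)$'' is false uniformly in $k$. The paper instead keeps the $k$-dependent factor, disposes of $k\le\eps n$ by the crude bound~\eqref{eq:10d} and a ratio argument, and for $\eps n\le k\le\alpha n$ proves convexity of $\hat f=f+g$, where $g$ records the log factor; this is exactly where the strengthened condition~\eqref{eq:alphabound5} (the $8/3$) is used, and~\eqref{eq:alphabound6} enters only as $\hat f(\alpha)<0$. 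Your closing paragraph concedes that this propagation is not carried out, but it is the substance of the ``moreover'' half of the lemma. A second, smaller problem is your Case~(A) parenthetical for $\pi_1(G^n_{2m,m'})$: counting distinct neighbours does not ``absorb'' the factor $2$. A vertex of $[n]\setminus[j]$ stems $2m$ blue edges and so can have up to $2m$ distinct neighbours in $K\cap[j]$, while Lemma~\ref{lem:goodold} only guarantees $(1-x)m\log(n/j)$ (with $m$, not $2m$) neighbours per good vertex; your double count then only yields $(1-x)(z-y)\log(n/j)\le 2(\ell+1-z)$, which~\eqref{eq:alphabound2} as stated does not contradict. The paper simply asserts that the type-1 argument is unaffected and does not rely on your distinctness remark; if you want to justify this step you need a different accounting (or a correspondingly strengthened~\eqref{eq:alphabound2}), not that parenthetical.
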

Note that conditions~(\ref{eq:zrange}--\ref{eq:wbound}) can be trivially satisfied by taking any arbitrary $x,y,z,d\in(0,1)$ and $w>0$ such that $y<z$, and then choosing $m\in\nat$ sufficiently large and $\alpha>0$ sufficiently small. We leave the result with such flexibility as our goal will be to tune everything to get the constant $m$ as small as possible. 

Moreover, let us note that we assumed that $\ell \in \{1,2\}$ as this is what will be used in our specific application of this lemma. However, it is straightforward to verify that the lemma holds for any real number $\ell > 0$ (by placing floors and ceilings in the appropriate places of the argument).
\begin{proof}
Let us start with showing the desired property for $G^n_{m-\old}$. A set $K$ of vertices of size $k$ is of type~1 if it contains at least $zk$ vertices in $[j]$, where $j=\lfloor k (n/k)^d \rfloor$. Otherwise, $K$ is of type~2.
First, we will focus on sets of type~1.
Observe that constants $x,y,d,\alpha,m$ satisfy the requirements of Lemma~\ref{lem:goodold} (by assumptions~\eqref{eq:dphim} and~\eqref{eq:alphabound1}). Since our aim is to obtain a statement that holds a.a.s., we may assume that the conclusion of Lemma~\ref{lem:goodold} holds, and proceed to prove the desired statement (deterministically) for any given set of type~1.
Let $K$ be a fixed set of type~1 of size $1\le k\le \alpha n$.
It follows that at least $(z-y)k$ vertices in $K\cap [j]$ have at least 
\[
(1-x) m \log(n/j)  \ge
(1-x)(1-d) m \log(1/\alpha)  > \frac{\ell+1-z}{z-y}m
\]
neighbours in $[n] \setminus [j]$ (by assumption~\eqref{eq:alphabound2}). Denote this set of at least $(z-y)k$ vertices by $K_0$, and let $K'_0$ be the set of neighbours of $K_0$ in $[n] \setminus [j]$. Looking at the degrees of the bipartite graph induced by the parts $K_0$ and $K'_0$, we conclude that $|K'_0| \ge \frac {\ell+1-z}{z-y} |K_0| \ge (\ell+1-z) k$.
Therefore, $|N(K)| \ge |K'_0\setminus K| \ge |K'_0| - |K \setminus [j] | \ge (\ell+1-z)k - (1-z)k = \ell k$, as desired.
This proves the claim for all sets of type~1.

Now, consider a set $K$ of size $1\le k\le \alpha n$ of type~2 (hence, $K$ must contain at least $(1-z)k$ vertices in $[n] \setminus [j]$). Observe that if $|N(K)| < \ell k$, then there must exist a set $S=S(K)$ of size $\ell k-1$ with $S\cap K=\emptyset$ such that all vertices in $K \setminus [j]$ have all of their neighbours in $K\cup S$. An important observation is that, when a vertex in $K \setminus [j]$ generated $m$ edges to older vertices, all vertices in $[j]$ were available for possible destinations.  Hence, the probability that one such vertex has all the neighbours in $K \cup S$ is at most
\begin{equation}\label{eq:10}
\left(\frac{(\ell+1)k-1}{j}\right)^m
\le \left(\frac{(\ell+1)k-1}{k (n/k)^d - 1}\right)^m
\le \left((\ell+1) (k/n)^d\right)^m.
\end{equation}
As a result, the expected number of sets $K$ of type~2 and size $k$ with $|N(K)| < \ell k$ is at most
\begin{align}
b_k &:= \binom{n}{k} \binom{n-k}{\ell k} \left((\ell+1)(k/n)^d\right)^{(1-z)km}
\label{eq:bk}
\\
&\le \left(\frac{en}{\ell^{\ell/(\ell+1)} k}\right)^{(\ell+1) k}  \left((\ell+1)(k/n)^d\right)^{(1-z)km}
\notag\\
&= \left( \left(\frac{e^{\ell+1}}{\ell^{\ell} }\right)   (\ell+1)^{(1-z)m} \;  (k/n)^{(1-z)dm - \ell - 1} \right)^k =: b'_k.
\notag
\end{align}
Note that~\eqref{eq:zrange} implies that
the exponent of $k/n$ in the expression above is positive.
Hence, proceeding analogously as in~\eqref{eq:aratio}, we can show that there exists a small enough constant $\eps > 0$ such that $b'_{k+1}/b'_k < 0.99$ whenever $k+1 \le \eps n$. This implies that
\begin{equation}\label{eq:bprimebound}
b'_k \le b'_1(0.99)^{k-1}
\qquad\text{for $1\le k\le\eps n$},
\end{equation}
and so $\sum_{k=1}^{\lfloor\eps n\rfloor} b'_k = O(b'_1) = o(1)$. It follows that a.a.s.\ no set of type~2 and size at most $\eps n$ fails to have the desired property. To deal with larger sets,
it is more convenient to estimate $b_k$ directly instead of $b'_k$. For $\eps n \le k \le \alpha n$,
we use Stirling's formula ($s! \sim \sqrt{2\pi s} (s/e)^s$, as $s\to\infty$) and obtain
\[
b_k =  \frac{ \Theta(1/n) \left((\ell+1) (k/n)^d\right)^{(1-z)m k}}{(k/n)^{k} (\ell k/n)^{\ell k} (1-(\ell+1)k/n)^{n-(\ell+1)k}}  
= e^{n(f( k/n ) +o(1))},
\]
where
\begin{equation}\label{eq:frho}
f(\rho) = (1-z)m\rho \log \left((\ell+1) \rho^d\right) - \rho \log \rho
- \ell \rho\log (\ell \rho) - ({1-(\ell+1)\rho})\log (1-(\ell+1)\rho).
\end{equation}
(Note that in the above estimate of $b_k$ we implicitly used the fact that $n>(\ell+1)k$, which follows from~\eqref{eq:alphabound3}.) The second derivative of $f$ is
\begin{equation}\label{eq:fpp}
f''(\rho) = \frac{md(1-z)(1-(\ell+1)\rho) -(\ell+1)}{\rho (1-(\ell+1)\rho)},
\end{equation}
which is strictly positive for $\rho\in (0,\alpha]$ (by~\eqref{eq:alphabound3}), so $f$ is concave up in that interval. Therefore, 
\[
b_k \le e^{o(n)} \max\left\{ b_{\lfloor\eps n\rfloor}, b_{\lfloor\alpha n\rfloor} \right\} = (1+o(1))^n \max\left\{ b_{\lfloor\eps n\rfloor}, b_{\lfloor\alpha n\rfloor} \right\}
\qquad
\text{for all $\eps n\le k\le \alpha n$.}
\]
From~\eqref{eq:bprimebound}, we get that $b_{\lfloor\eps n\rfloor} \le b'_1 (0.99)^{\eps n-2} = o( (0.99)^{\eps n} )$. Next, we proceed to bound $b_{\lfloor\alpha n\rfloor} = e^{n(f(\alpha)+o(1))}$.
Observe that $0<\alpha<1/(\ell+1)$ by~\eqref{eq:alphabound3} and, for every $\alpha$ in that range, elementary analysis shows that
\begin{equation}\label{eq:tay}
- ({1-(\ell+1)\alpha})\log (1-(\ell+1)\alpha) < (\ell+1)\alpha.
\end{equation}
Hence,
\[
f(\alpha) < (1-z)m\alpha \log \left((\ell+1) \alpha^d\right) - \alpha \log \alpha
- \ell \alpha\log (\ell \alpha) + (\ell+1)\alpha <0,
\]
where the first inequality holds by~\eqref{eq:tay} and the second one by~\eqref{eq:alphabound4}. Putting everything together, for each $\eps n\le k\le \alpha n$,
\[
b_k \le \big(0.99+o(1)\big)^{\eps n}  +  \left(e^{f(\alpha)}+o(1)\right)^n = o(1/n).
\]
It follows that $\sum_{k=\lfloor\eps n\rfloor}^{\lfloor\alpha n\rfloor} b_k = o(1)$, and so a.a.s.\ all sets of type 2 of size at most $\alpha n$ satisfy the desired property, and the proof is finished for $G^n_{m-\old}$.

\medskip

Adjusting the proof for $\pi_1 (G^n_{2m,m'})$ is more complicated than in the previous lemma. As Lemma~\ref{lem:goodold} can be applied to $\pi_1 (G^n_{2m,m'})$, the proof for type 1 sets is not affected.
For type 2 sets, we need to use Lemmas~\ref{lem:total_weight} and~\ref{lem:extra} in order to obtain an analogue of~\eqref{eq:bk}.
Let $H$ be the a.a.s.\ event in the statement of Lemma~\ref{lem:total_weight} (replacing $m$ by $2m+m'$) with $\omega := \log \lfloor n^d \rfloor \to \infty$. As usual, as we aim for a statement that holds a.a.s., we may assume that event $H$ holds. Consider any set $K$ of size $1\le k\le \alpha n$ of type~2 and any set $S$ of size $\ell k-1$ such that $S\cap K=\emptyset$. Put as before $j=\lfloor k (n/k)^d \rfloor$, so $j \ge \lfloor n^d \rfloor \to \infty$, as $n \to \infty$ (and also $\log j \ge \omega$).
Setting $R=K\setminus[j]$ and $Q=K\cup S$  in Lemma~\ref{lem:extra} (with $\sigma=1$, $m_1=2m$ and $m_2=m'$), we conclude that, conditional upon $H$, the probability $p$ that every vertex in $K\setminus[j]$ has all neighbours in $K\cup S$ with respect to graph $\pi_1 (G^n_{2m,m'})$ satisfies
\begin{align}
p &\le \left( \left(1+\frac{C}{|Q|}+o(1) \right) \left( 2 \sqrt{\frac{|Q|}{j}}  + \sqrt{\frac{8|Q|}{mj}}\log\left(e\left(1+\frac{j}{|Q|}\right)\right)  \right) \right)^{2m|R|}
\notag\\
&\le \left( \left(1+\frac{C}{k}+o(1) \right) 2 \sqrt{(\ell+1)(k/n)^d } \left(1 + \sqrt{\frac{2}{m}}\log\left(e\left(1+\frac{(n/k)^d}{\ell+1}\right)\right) \right)  \right)^{2m|R|},
\label{eq:10b}
\end{align}
where we used that $|Q|/j \le \frac{(\ell+1)k-1}{k(n/k)^d-1} \le (\ell+1) (k/n)^d$ and
the fact that $\frac{\log(e(1+t))}{\sqrt t}$ is decreasing in $t\in (0,+\infty)$ (as observed below~\eqref{eq:PEH2}).
Since $|R|\ge(1-z)k$, replacing $|R|$ by $(1-z)k$ in~\eqref{eq:10b} gives a valid bound (even in the case that the base of the power in the right-hand side of~\eqref{eq:10b} is greater than $1$, since $p$ is a probability). Hence, we conclude that
\begin{align}
p &\le \left( \left(1+\frac{C}{k}+o(1) \right)^2 4 (\ell+1)(k/n)^d  \left(1 + \sqrt{\frac{2}{m}}\log\left(e\left(1+\frac{(n/k)^d}{\ell+1}\right)\right) \right)^2  \right)^{(1-z)km}
\label{eq:10c}
\\
&\le \left( C'  (k/n)^d  \log^2(n/k) \right)^{(1-z)km},
\label{eq:10d}
\end{align}
for some constant $C'>0$ that may depend on $C$, $\ell$ and $\alpha$. The role of $p$ (and its bounds~\eqref{eq:10c} and~\eqref{eq:10d}) will be very similar to that of $\left((\ell+1)(k/n)^d\right)^{(1-z)km}$ in the computations that we did for the $G^n_{m-\old}$ model (see~\eqref{eq:10} and~\eqref{eq:bk}).
Indeed, for the $\pi_1 (G^n_{2m,m'})$ model, the expected number (conditional upon $H$) of sets $K$ of type~2 and size $k$ with $|N(K)| < \ell k$ is at most
\begin{align}
b_k &:= \binom{n}{k} \binom{n-k}{\ell k} p
\label{eq:bk2}
\\
&\le \left( \left(\frac{e^{\ell+1}}{\ell^{\ell} }\right)  C'   (k/n)^{(1-z)dm - \ell - 1}  \log^{2m(1-z)}(n/k) \right)^k =: b'_k.
\notag
\end{align}
Our goal is to show that $\sum_{k=1}^{\lfloor\alpha n\rfloor} b_k=o(1)$. This, combined with the fact that $H$ is a.a.s.\ true, will yield the desired result for $\pi_1 (G^n_{2m,m'})$, and finish the proof of the lemma. The argument to bound $b_k$ (or $b'_k$) is analogous to the one we used for $G^n_{m-\old}$, so we will only sketch the main differences.
By inspecting the ratios $b'_{k+1}/b'_k$, we can easily check that~\eqref{eq:bprimebound} remains valid in the present context for $1\le k\le\eps n$ provided that $\eps>0$ is a sufficiently small constant given $C'$, $m$, $\ell$, $z$ and $d$. Therefore $\sum_{k=1}^{\lfloor\eps n\rfloor} b'_k = O(b'_1) = o(1)$ as before. To analyze the case $\eps n\le k\le \alpha n$, we plug the bound obtained in~\eqref{eq:10c} into the definition of $b_k$ in~\eqref{eq:bk2}. Applying Stirling's formula to the resulting bound and performing elementary manipulations, we conclude that
\[
b_k \le e^{n( \hat f(k/n) +o(1) )},
\]
where $\hat f(\rho)=f(\rho)+g(\rho)$ with $f(\rho)$ defined in~\eqref{eq:frho} and
\[
g(\rho) :=
2(1-z)m\rho
\log \left(2 + \sqrt{\frac{8}{m}}\log\left(e\left(1+\frac{1}{(\ell+1)\rho^d}\right)\right) \right).
\]
The function $\hat f$ will play the same role as $f$ did in the argument for the for $G^n_{m-\old}$ model.
In order to adapt the previous proof to the current model and show that $\sum_{k=\lfloor\eps n\rfloor}^{\lfloor\alpha n\rfloor} b_k = o(1)$, we only need to check that $\hat f(\alpha)<0$ and $(\hat f)''(\rho)>0$ for all $\rho\in(0,\alpha]$. We proceed to verify these two claims. Firstly, from~\eqref{eq:wbound}, we get that
\[
g(\alpha) = (1-z)m\alpha
\log(2 + w)^2.
\]
Combining this and~\eqref{eq:tay} yields
\[
\hat f(\alpha) < (1-z)m\alpha \log \left((2 + w)^2(\ell+1) \alpha^d\right)
- \alpha \log \alpha - \ell \alpha\log (\ell \alpha) + (\ell+1)\alpha
<0,
\]
where the second inequality follows directly from~\eqref{eq:alphabound6}.
Secondly, we differentiate $g$ twice and obtain
\[
g''(\rho) =
- \frac{2md(1-z) \sqrt{8/m}}{\rho \left(1 + (\ell+1)\rho^d \right) (2+\lambda)} \left(  1
+ \frac{d\sqrt{8/m}}{\left(1 + (\ell+1)\rho^d \right)(2+\lambda)}
- \frac{ d(\ell+1)\rho^d}{1 + (\ell+1)\rho^d} \right),
\]
where
\[
\lambda = \lambda(\rho) = \sqrt{\frac{8}{m}}\log\left(e\left(1+\frac{1}{(\ell+1)\rho^d}\right)\right).
\]
Note that, for all $\rho\in(0,\alpha]$,
\[
\frac{\sqrt{8/m}}{\left(1 + (\ell+1)\rho^d \right)(2+\lambda)}
= \frac{1}{\left(1 + (\ell+1)\rho^d \right) \left(\sqrt{m/2}+\log\left(e\left(1+\frac{1}{(\ell+1)\rho^d}\right)\right) \right)} \le 1/4,
\]
since $m\ge1$ and
$\left(1 + t \right) \left(\sqrt{1/2}+\log\left(e\left(1+1/t\right)\right) \right) \ge 4.12 >4$ for all $t\in(0,\infty)$ (by elementary analysis). Therefore,
\begin{align*}
g''(\rho) &\ge
- \frac{2md(1-z) \sqrt{8/m}}{\rho \left(1 + (\ell+1)\rho^d \right) (2+\lambda)} \left(  1
+ \frac{\sqrt{8/m}}{\left(1 + (\ell+1)\rho^d \right)(2+\lambda)} \right)
\\
&\ge
- \frac{2md(1-z)}{\rho}\, (1/4)(1+1/4)
\\
&\ge
- (5/8)\, \frac{md(1-z)}{\rho}.
\end{align*}
Combining this bound with~\eqref{eq:fpp}, we conclude that
\[
(\hat f)''(\rho) = f''(\rho) + g''(\rho) \ge 
\frac{(3/8)\, md(1-z)(1-(\ell+1)\rho) -(\ell+1)}{\rho (1-(\ell+1)\rho)},
\]
which is positive for all $\rho\in(0,\alpha]$ by~\eqref{eq:alphabound5}. This finishes the proof of the lemma for the $\pi_1 (G^n_{2m,m'})$ model.
\end{proof}

\bigskip

Finally, we include some properties of large sets of vertices, whose size is not covered in the previous expansion lemma. We will show that (a.a.s.\ in $G^n_{m-\old}$ or $\pi_1 (G^n_{2m,m'})$) large sets of vertices must induce some edges and disjoint pairs of large sets must have some edges across. 
These properties will guarantee the existence of a long path and a large matching that will be extended later to a Hamilton cycle and a perfect matching, respectively.

\begin{lem}\label{lem:bigpair}
For any constant $m \ge 12$, let $\beta = \beta(m) \in (0,1/2)$ be such that
$$
2 \beta \log \beta + (1-2\beta) \log (1-2\beta) + \beta^2 m / 4 = 0,
$$ 
and for any constant $m \ge 1$, let $\gamma = \gamma(m) \in (0,1)$ be such that
$$
\gamma \log \gamma + (1-\gamma) \log (1-\gamma) + \gamma^2 m / 2 = 0.
$$
Fix $m,m'\in\ent_{\ge0}$.
Then, the following two properties hold a.a.s.\ for $G^n_{m-\old}$ and for $\pi_1 (G^n_{2m,m'})$.
\begin{itemize}
\item [(i)]
If $m \ge 12$, then there is no pair of disjoint sets of vertices $A,B$, both of size at least $\beta n$, with no edges between $A$ and $B$. As a result, the length of a longest path is at least $n- 2\lceil\beta n\rceil \sim (1-2\beta)n$.
\item [(ii)] If $m \ge 1$, then  no set $C$ of size at least $\gamma n$ forms an independent set. As a result, the size of a maximum matching is at least $(1-\gamma)n/2$.
\end{itemize} 
\end{lem}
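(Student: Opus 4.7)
The plan is to prove both statements by the first-moment method, handling $G^n_{m-\old}$ and $\pi_1(G^n_{2m,m'})$ in parallel. By monotonicity, for~(i) I will only need to rule out disjoint pairs $A,B$ with $|A|=|B|=a:=\lceil\beta n\rceil$, and for~(ii) independent sets $C$ of size exactly $c:=\lceil\gamma n\rceil$, since any larger bad configuration contains one of this size.

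For part~(i), fix disjoint $A,B$ of size $a$. In $G^n_{m-\old}$, edges stemming from distinct vertices are independent, so by~\eqref{eq:m-old},
\[
\pr[\text{no edges between } A, B]
=\prod_{v\in A}\Big(1-\tfrac{|B\cap[v-1]|}{v-1}\Big)^m\prod_{v\in B}\Big(1-\tfrac{|A\cap[v-1]|}{v-1}\Big)^m.
\]
The identity $\sum_{v\in A}|B\cap[v-1]|+\sum_{v\in B}|A\cap[v-1]|=|A|\,|B|$ (each cross-pair counted once, at its younger endpoint), together with $v-1\le n$ and $1-x\le e^{-x}$, yields a bound of $\exp(-m|A||B|/n)=\exp(-m\beta^2 n+o(n))$. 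For $\pi_1(G^n_{2m,m'})$ the same bound follows by ordering the vertices of $A\cup B$ by age and conditioning iteratively: since every prior event involves only edges stemming from older vertices, Lemma~\ref{lem:bound_no_edges} applies at each step and contributes a factor $(1-|W_v|/(2v))^{2m}$, in which the $2$ in the denominator is exactly balanced by the $2m$ in the exponent. Combining with Stirling's approximation $\binom{n}{a}\binom{n-a}{a}=\Theta(n^{-1/2})\exp\bigl(-n[2\beta\log\beta+(1-2\beta)\log(1-2\beta)]\bigr)$ and substituting the defining equation $2\beta\log\beta+(1-2\beta)\log(1-2\beta)=-m\beta^2/4$, the expected number of bad pairs is $\exp(-3m\beta^2n/4+o(n))\to 0$.

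For part~(ii), the same machinery gives $\pr[C\text{ independent}]\le\exp\bigl(-m\binom{c}{2}/n\bigr)=\exp(-m\gamma^2 n/2+O(1))$. Since $\binom{n}{c}=\Theta(n^{-1/2})\exp\bigl(-n[\gamma\log\gamma+(1-\gamma)\log(1-\gamma)]\bigr)$ and the defining equation forces the exponents to cancel exactly, the expected number of independent sets of size $c$ is $\Theta(n^{-1/2})=o(1)$. The two consequences about paths and matchings are classical: for the matching, Berge's theorem implies that the $n-2\mu$ unmatched vertices of any maximum matching form an independent set (else we could enlarge it), hence $n-2\mu<\gamma n$ and $\mu\ge(1-\gamma)n/2$; for the longest path, I would apply a P\'osa-rotation argument showing that any longest path of length less than $n-2\lceil\beta n\rceil$ yields two disjoint sets of alternative endpoints of size at least $\lceil\beta n\rceil$ with all neighbours on the path, so the edge between them forced by~(i), combined with the connectivity of $G^n_m$ (Bollob\'as-Riordan), produces a longer path, contradicting maximality.

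The main subtlety will lie in the passage from $G^n_{m-\old}$ to $\pi_1(G^n_{2m,m'})$: because edges in preferential attachment are not independent, I must carefully order the conditioning so that Lemma~\ref{lem:bound_no_edges} (rather than the weaker Lemma~\ref{lem:bound_no_edges_future}) applies at each step. A secondary technical point is that in part~(ii) the first-moment is only $\Theta(n^{-1/2})$ at the critical $\gamma$, so I cannot afford crude $\binom{n}{c}\le(en/c)^c$ bounds and must rely on the sharp Stirling approximation.
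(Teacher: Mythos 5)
Your probabilistic core is fine, and in part (i) it is in fact slightly sharper than the paper's: by charging each cross-pair to its younger endpoint you get the full exponent $\exp(-m\beta^2 n+o(n))$, whereas the paper only uses edges from the youngest half of $A\cup B$ towards the oldest half and settles for $\exp(-\beta^2 mn/4)$, which is exactly what the defining equation of $\beta$ is calibrated to (so the paper needs the polynomial Stirling factor in (i) as well, just as you note for (ii)). Your iterated conditioning in age order so that Lemma~\ref{lem:bound_no_edges} (with $m_1=2m$, $\sigma=1$) applies at every step, with the $2$ in the denominator cancelled by the exponent $2m$, is precisely the adaptation the paper intends for $\pi_1\big(G^n_{2m,m'}\big)$, and the derivation of the matching consequence in (ii) from maximality is the same as the paper's.

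The genuine gap is the deterministic step from the pair-property in (i) to the longest-path bound. Your proposed P\'osa-rotation route does not follow from property (i) alone: rotations from an endpoint $a$ of a longest path only give $|N(\END(P,a))|<2|\END(P,a)|$ (Lemma~\ref{lem:cycles_expansion}), and to conclude that $\END(P,a)$ (let alone two disjoint such endpoint sets) has size at least $\lceil\beta n\rceil$ you would need a $2$-expansion property for \emph{small} sets, which is Lemma~\ref{lem:expansion} with $\ell=2$ and is not part of, nor implied by, the present lemma; property (i) only constrains pairs of linear-sized sets and says nothing about neighbourhood expansion. The appeal to connectivity (Bollob\'as--Riordan) is likewise out of place here: it is neither available for $G^n_{m-\old}$ from that reference nor needed. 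The paper instead derives the path bound purely deterministically by the exploration (DFS-type) argument: grow a path, and whenever it cannot be extended move its endpoint to a set $W$; throughout, there are no edges between the unexplored set $U$ and $W$, at each step $|U|$ decreases or $|W|$ increases, and if no path on $n-2\lceil\beta n\rceil$ vertices exists then $|U|+|W|\ge 2\lceil\beta n\rceil$ always, so at some moment both $U$ and $W$ have size at least $\lceil\beta n\rceil$ with no edge between them, contradicting the first claim of (i). Replacing your rotation step by this argument closes the gap; everything else in your proposal stands.
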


Before we prove the lemma, let us make a few observations. First, the lower bound of $12$ on $m$ is needed for $\beta=\beta(m)$ to be well defined. Second, note that the function $f(x) = - 2 x \log x - (1-2x) \log (1-2x)$ is maximized at $f(1/3)=\log 3$. Hence, $\beta \le \sqrt{ 4 \log 3 / m}$ and, in particular, $\beta$ tends to zero as $m \to \infty$. In fact, one can show that $\beta \sim 8 \log m / m$ as $m\to\infty$. Similarly, we get that $\gamma \le \sqrt{2 \log 2 / m}$, and $\gamma \sim 2 \log m / m$ as $m\to\infty$. Finally, let us mention that the second part of part (i) uses simple ideas that proved to be extremely useful in many current applications (see, for example,~\cite{DudekPralat, DudekPralat2, DudekPralat3, L15, P14}). 
Such techniques were used for the first time in~\cite{Krivelevich1, Krivelevich2} (see the recent book~\cite{Krivelevich_book} that covers several tools including this one, or another recent book on random graphs~\cite[Chapter~6.3]{BookFK}).

\begin{proof}
Let us start with the $G^n_{m-\old}$ model and part (i). Consider any pair of disjoint sets $A$ and $B$, both of size $\lceil \beta n\rceil$. Let $U$ be the set of $(|A|+|B|)/2 = \lceil \beta n\rceil$ oldest vertices in $A\cup B$ and let $U'= (A\cup B)\setminus U$ contain the youngest half. Without loss of generality, $A$ contains at least $\lceil\beta n/2\rceil$ vertices in $U$ and $B$ contains at least $\lceil\beta n/2\rceil$ vertices in $U'$. Therefore, the probability that there are no edges between $A\cap U$ and $B\cap U'$ is at most
\begin{equation}\label{eq:15}
(1-\lceil\beta n/2\rceil/n)^{m\lceil\beta n/2\rceil} \le (1-\beta/2)^{m\beta n/2} \le \exp(-\beta^2 m n/4)
\end{equation}
by~\eqref{eq:m-old}. Hence, using Stirling's formula ($s! \sim \sqrt{2 \pi s} (s/e)^s$, as $s \to \infty$), the expected number of pairs of sets $A,B$ that do not have the desired property is at most
\[
{n \choose \lceil\beta n\rceil} {(n-\lceil\beta n\rceil \choose \lceil\beta n\rceil} \exp(-\beta^2 m n/4)
\;=\; o \left( \beta^{-2\beta n} (1-2\beta)^{-(1-2\beta)n} \exp(-\beta^2 m n / 4) \right) \;=\; o(1),
\]
by the definition of $\beta$. The first claim of part (i) follows by Markov's inequality.

The second claim of part (i) follows from the first claim (deterministically). For a contradiction, suppose that the first claim holds and there is no path of length $h = n-2\lceil\beta n\rceil$ (or equivalently all paths contain at most $h$ vertices). We perform the following algorithm and construct a path $P$. Let $v_1$ be an arbitrary vertex. Initially, let $P=(v_1)$, $U = V \setminus \{v_1\}$, and $W = \emptyset$. Then, if there is an edge from $v_1$ to $U$ (say from $v_1$ to $v_2 \in U$), we extend the path to $P=(v_1,v_2)$ and remove $v_2$ from $U$. We continue extending the path $P$ this way for as long as possible. Since there is no path of length $h \le n-1$, we must reach the point of the process in which $P$ cannot be extended, that is, there is a path from $v_1$ to $v_k$ ($k \le h$) and there is no edge from $v_k$ to $U$. This time, $v_k$ is moved to $W$ and we try to continue extending the path from $v_{k-1}$, reaching another critical point in which another vertex will be moved to $W$, etc. If $P$ is reduced to a single vertex $v_1$ and no edge to $U$ is found, we move $v_1$ to $W$ and simply re-start the process from another vertex from $U$, again arbitrarily chosen. 
An obvious but important observation is that during this algorithm there is never an edge between $U$ and $W$. Moreover, in each step of the process, the size of $U$ decreases by 1 or the size of $W$ increases by 1. Finally, the number of vertices of the path $P$ is always at most $h=n-2\lceil\beta n\rceil$ by assumption. Hence, at some point of the process both $U$ and $W$ must have size at least $\lceil \beta n\rceil$. But this contradicts the first claim of part (i) and so this part is finished. 

\smallskip

Now, let us move to part (ii). Consider any set $C$ of size $k = \lceil\gamma n\rceil$. Again using~\eqref{eq:m-old}, the probability $C$ forms an independent set is at most
\begin{equation}\label{eq:16}
\prod_{i=1}^{k-1} \left( 1 - \frac {i}{n} \right)^m \le \exp \left( - \frac {m}{n} \sum_{i=1}^{k-1} i \right) = \exp \left( - \frac {m k (k-1)}{2n} \right) = O \left( \exp \left( - \gamma^2 mn/2 \right) \right).
\end{equation}
The expected number of sets that do not have the desired property is of order at most 
$$
{n \choose \lceil \gamma n\rceil} \exp \left( - \gamma^2 mn/2 \right) = o \left( \gamma^{-\gamma n} (1-\gamma)^{-(1-\gamma)n} \exp (- \gamma^2 mn/2) \right) = o(1),
$$
by the definition of $\gamma$, and the first claim of part (ii) is proved. 

The second claim of part (ii) is now a trivial (deterministic) implication. Indeed, if there is no matching of size at least $(1-\gamma)n/2$, then any maximum (or even maximal) matching leaves at least $\gamma n$ vertices that are not matched that form an independent set. This would contradict the first claim and so the proof of the theorem for $G^n_{m-\old}$ is finished.
Adjusting the proof for $\pi_1 (G^n_{2m,m'})$ is trivial, and only requires using Lemma~\ref{lem:bound_no_edges} (with $\sigma=1$, $m_1=2m$ and $m_2=m'$) instead of~\eqref{eq:m-old} in order to obtain the analogues of~\eqref{eq:15} and~\eqref{eq:16}. 
\end{proof}

\bigskip

Tuning the constants in the previous lemmas requires some patience but is straightforward. (Maple or some other software might be helpful.) For example, we get the following set of constants for the $G^n_{m-\old}$ model:
\begin{lem}\label{lem:constants1}\hspace{0cm}
\renewcommand{\labelenumi}{(\alph{enumi})}
\begin{itemize}
\item [(a)]
The following constants satisfy conditions~(\ref{eq:zrange}--\ref{eq:alphabound}) in Lemma~\ref{lem:expansion}: $m=120$, $\ell=1$, $\alpha = 0.0538$,  $x=0.22791$, $y=0.020063$, $z=0.851649$ and $d=0.387967$. Moreover, for $m=120$ we have $\gamma = \gamma(m) \le 0.06238$ in Lemma~\ref{lem:bigpair}.
\item [(b)]\label{constantsb}
The following constants also satisfy  conditions~(\ref{eq:zrange}--\ref{eq:alphabound}) in Lemma~\ref{lem:expansion}:
$m=2{,}900$, $\ell =2$, $\alpha = 0.032003$, $x=0.048929$, $y=0.003625$, $z=0.965269$ and $d=0.353628$.
Moreover, for $m=2{,}900$ we have $\beta = \beta(m) \le 0.014414$ in Lemma~\ref{lem:bigpair}.
\end{itemize}
\end{lem}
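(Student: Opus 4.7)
The lemma is a purely numerical verification: each of the constraints in Lemma~\ref{lem:expansion} is an explicit algebraic inequality in the constants $m,\ell,\alpha,x,y,z,d$, so my plan is simply to substitute the listed values into each condition and check the resulting inequality to sufficient numerical precision (most conveniently in a computer algebra system). No structural argument is needed beyond reliable arithmetic.

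For part (a), I would first dispatch the ``cheap'' conditions. The inequality $y<z<1-(\ell+1)/(dm)$ from~\eqref{eq:zrange} is immediate: with $d=0.387967$ and $m=120$, the upper bound is roughly $0.957$, which bounds $z=0.851649>y=0.020063$. Condition~\eqref{eq:dphim} reduces to evaluating $\varphi(-x)=(1-x)\log(1-x)+x$ at $x=0.22791$ and checking that $(1-d)\varphi(-x)m$ comfortably exceeds $d$. The four bounds on $\alpha$ in~\eqref{eq:alphabound} are each of the form ``$\alpha<F(\ldots)$'' for an explicit $F$; \eqref{eq:alphabound1} and \eqref{eq:alphabound2} involve $\varphi(-x)$ and a simple exponential, while~\eqref{eq:alphabound3} is direct arithmetic. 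The tightest of the four is typically~\eqref{eq:alphabound4}, whose right-hand side depends nontrivially on $z$ and $d$ through an exponential; this is the condition that effectively pins down how small $\alpha$ may be taken.

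For the $\gamma$-bound, set $h(\gamma)=\gamma\log\gamma+(1-\gamma)\log(1-\gamma)+\gamma^2 m/2$. Since $h(\gamma)\to 0$ as $\gamma\to 0^+$, the function $h$ is initially negative, has a unique minimum (as $h'(\gamma)=\log\bigl(\gamma/(1-\gamma)\bigr)+\gamma m$ is monotone), and then increases to cross zero at $\gamma(m)$. Hence it suffices to evaluate $h$ at $\gamma=0.06238$ with $m=120$ and confirm $h(0.06238)\ge 0$, which forces $\gamma(120)\le 0.06238$ by the definition in Lemma~\ref{lem:bigpair}. Part~(b) is handled identically: the conditions~\eqref{eq:zrange}--\eqref{eq:alphabound4} are checked at the new values (now with $\ell=2$ and $m=2900$), and for the $\beta$-bound I would evaluate $\tilde h(\beta)=2\beta\log\beta+(1-2\beta)\log(1-2\beta)+\beta^2 m/4$ at $\beta=0.014414$ with $m=2900$ (using the analogous monotonicity analysis) and check positivity.

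The only real difficulty in the lemma is bookkeeping, not mathematics: all four bounds on $\alpha$ must hold simultaneously for the same tuple $(x,y,z,d,m)$, and tightening one condition frequently loosens another (for example, enlarging $z$ relaxes~\eqref{eq:alphabound2} but tightens~\eqref{eq:alphabound3} and~\eqref{eq:alphabound4}). Locating constants that minimize $m$ subject to all constraints therefore amounts to a small multi-variable numerical optimization; once the quintuple stated in the lemma is in hand, the verification itself is purely mechanical substitution.
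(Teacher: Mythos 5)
Your proposal is correct and matches the paper's treatment: the paper gives no written proof of this lemma, stating only that verifying the conditions is a straightforward (if tedious) numerical substitution best done with software, which is exactly what you describe, and your monotonicity/convexity justification for reading off the bounds on $\gamma(m)$ and $\beta(m)$ from a single evaluation of the defining functions is sound.
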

Similarly, for $\pi_1 (G^n_{2m,m'})$, we get:
\begin{lem}\label{lem:constants2}\hspace{0cm}
\begin{itemize}
\item [(c)]\label{constantsc}
The following constants satisfy conditions~(\ref{eq:zrange}--\ref{eq:wbound}) in Lemma~\ref{lem:expansion}:
$m=500$, $\ell=1$, $\alpha=0.016801$, $x=0.149159$, $y=0.008856$, $z=0.905885$ and $d=0.649188$.
Moreover, for $m=500$ we have $\gamma = \gamma(m) \le 0.019675$ in Lemma~\ref{lem:bigpair}.
\item [(d)]\label{constantsd}
The following constants also satisfy conditions~(\ref{eq:zrange}--\ref{eq:wbound}) in Lemma~\ref{lem:expansion}:
$m=14{,}000$, $\ell=2$, $\alpha = 0.008874$, $x=0.026228$, $y=0.001272$, $z=0.980855$ and $d=0.551906$.
Moreover, for $m=14{,}000$ we have $\beta = \beta(m) \le 0.003760$ in Lemma~\ref{lem:bigpair}.
\end{itemize}
\end{lem}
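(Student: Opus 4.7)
The plan is a direct numerical verification: for each of parts (c) and (d), I would substitute the proposed values of $m$, $\ell$, $\alpha$, $x$, $y$, $z$, $d$ into each of the conditions~\eqref{eq:zrange}, \eqref{eq:dphim}, \eqref{eq:alphabound1}, \eqref{eq:alphabound2}, \eqref{eq:alphabound5}, \eqref{eq:alphabound6}, together with the defining equation~\eqref{eq:wbound} for $w$, and check the inequalities numerically (e.g.\ via Maple, Mathematica, or any computer algebra system carrying enough digits).

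Concretely, I would proceed in the following order for each part. First compute $1-\frac{\ell+1}{dm}$ and verify that it strictly exceeds $z$, and that $z>y$, confirming \eqref{eq:zrange}. Next, evaluate $\varphi(-x)=(1-x)\log(1-x)+x$ from~\eqref{eq:phidef} and check that $(1-d)\varphi(-x)m - d > 0$ to establish~\eqref{eq:dphim}; this quantity also serves as the exponent in~\eqref{eq:alphabound1}, so I can then check $\alpha<(0.99y/e)^{1/((1-d)\varphi(-x)m-d)}$. Condition~\eqref{eq:alphabound2} is a straightforward evaluation of an exponential, so I would check it directly. For~\eqref{eq:alphabound5}, I would compute $\frac{1}{\ell+1}-\frac{8/3}{d(1-z)m}$ and compare to $\alpha$. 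Finally, using the value of $\alpha$ just verified and the formula~\eqref{eq:wbound}, I would compute $w=\sqrt{8/m}\log(e(1+1/((\ell+1)\alpha^d)))$ and substitute into~\eqref{eq:alphabound6} to check the last inequality.

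To handle the remaining claims about $\gamma(m)$ in (c) and $\beta(m)$ in (d), recall from Lemma~\ref{lem:bigpair} that these constants are the (smaller, in~$(0,1)$) roots of
\[
\gamma\log\gamma + (1-\gamma)\log(1-\gamma) + \gamma^2 m/2 = 0
\]
and
\[
2\beta\log\beta + (1-2\beta)\log(1-2\beta) + \beta^2 m/4 = 0,
\]
respectively. The left-hand sides are monotone at the claimed bounds (for sufficiently small positive argument), so it suffices to substitute $\gamma=0.019675$ with $m=500$ and verify that the left-hand side is nonnegative (forcing the actual root to lie at or below this value), and similarly to substitute $\beta=0.003760$ with $m=14{,}000$ and verify nonnegativity of its left-hand side. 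Both are routine evaluations.

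The main (and only) obstacle is numerical: the constants have been tuned so that several of the inequalities, in particular~\eqref{eq:alphabound1}, \eqref{eq:alphabound2} and~\eqref{eq:alphabound6}, are relatively tight. In practice this means one must carry sufficient precision (say, double precision with a generous margin, or else symbolic/exact arithmetic followed by interval bounds) to be confident that the strict inequalities truly hold rather than narrowly fail due to rounding. Beyond that, there is no conceptual content; the lemma is essentially a certificate that witnesses suitable choices of the free parameters for Lemma~\ref{lem:expansion} (with its refined conditions) applied to $\pi_1(G^n_{2m,m'})$ exist and are small enough that the hypotheses of Lemma~\ref{lem:bigpair} yield the stated bounds on $\gamma$ and $\beta$.
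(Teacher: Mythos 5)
Your proposal is correct and coincides with the paper's treatment: the paper offers no written proof of this lemma beyond remarking that the tuning is ``straightforward'' with software, so a direct numerical check of conditions~(\ref{eq:zrange})--(\ref{eq:wbound}) (using the stronger variants~(\ref{eq:alphabound5})--(\ref{eq:alphabound6}) with $w$ from~(\ref{eq:wbound})) together with a sign check of the defining equations for $\gamma$ and $\beta$ at the claimed values is exactly what is intended. Your caution about precision is well placed, since several of the inequalities (notably~(\ref{eq:alphabound2}) in part~(c) and the $\gamma$, $\beta$ bounds) hold only with razor-thin margins at the stated six-decimal rounding.
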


\subsection{Perfect matchings}

In this subsection, we deal with perfect matchings. Let us start with the following deterministic result that can be found, for example, in~\cite[Lemma~6.3]{BookFK}. Let $G=(V,E)$ be any graph. Let $A=A(G) \subseteq V$ be the set of vertices that are isolated by some maximum matching. For $v \in A$, let
$$
B(v) = \{w \in V \setminus \{v\}\colon \text{there exists a maximum matching that isolates both $v$ and $w$}\}.
$$
Observe that $B(v)\subseteq A$, and moreover $w \in B(v)$ implies that $v \in B(w)$. The set $B(v)$ is very important for understanding whether a few additional random edges, with $v$ as one of their endpoints, have a chance to increase the size of a maximum matching. Indeed, clearly, the size increases by one if an edge between $v$ and some vertex in $B(v)$ is added to the graph. 
Moreover, the following lemma holds. (We provide the proof for completeness.)

\begin{lem}[\cite{BookFK}]\label{lem:matching_expansion}
Let $G$ be a graph without a perfect matching. If $v \in A(G)$ and $B(v) \neq \emptyset$, then 
$$|N(B(v))| < |B(v)|.$$
\end{lem}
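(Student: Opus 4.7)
The natural approach is via the Gallai--Edmonds structure theorem. Recall that this decomposes $V(G) = D \cup A' \cup C$, where $D = A(G)$ is the set of vertices missed by some maximum matching, $A' = N(D)\setminus D$, and $C$ is the rest; every component of $G[D]$ is factor-critical, and every maximum matching of $G$ consists of a perfect matching of $G[C]$, a near-perfect matching of each component of $G[D]$, and a matching injecting $A'$ into distinct components of $G[D]$. Since $G$ has no perfect matching, the number of components of $G[D]$ strictly exceeds $|A'|$. Let $K_v$ denote the component of $G[D]$ containing $v$.

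I would first pin down $B(v)$ and $N(B(v))$ in these structural terms. Define $\mathcal{K}_v$ to be the family of components $K \ne K_v$ of $G[D]$ such that some maximum matching of $G$ leaves both $K$ and $K_v$ unmatched by $A'$. Factor-criticality of each $K$ lets us freely choose which of its vertices is exposed, so
\[
B(v) \;=\; \bigcup_{K \in \mathcal{K}_v} K.
\]
Since $G[D]$ has no edges between distinct components, and there are no $G$-edges between $D$ and $C$, every neighbour of $B(v)$ outside $B(v)$ lies in $A'$. Introducing the bipartite auxiliary graph $H'$ between $A'$ and the set $X'$ of components of $G[D]\setminus\{K_v\}$ (with an $H'$-edge from $a$ to $K$ whenever $a$ has a $G$-neighbour in $K$), we obtain $N(B(v)) = N_{H'}(\mathcal{K}_v)$.

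Next, I would apply the defect version of K\"onig's theorem to $H'$. Since $v \in A(G)$, $H'$ admits an $A'$-saturating matching, so its deficit on the $X'$-side equals $|X'| - |A'|$; moreover this deficit is at least $1$ because $B(v)\ne\emptyset$ forces at least one $X'$-component to be exposable by some $A'$-saturating matching. A short augmenting-path argument in $H'$ identifies $\mathcal{K}_v$ as the (unique) maximum-deficit K\"onig witness subset of $X'$, which yields
\[
|\mathcal{K}_v| - |N_{H'}(\mathcal{K}_v)| \;=\; |X'| - |A'| \;\ge\; 1.
\]
Combined with $|B(v)| \ge |\mathcal{K}_v|$ (each $K$ is nonempty) and $N(B(v)) = N_{H'}(\mathcal{K}_v)$, this gives $|N(B(v))| < |B(v)|$, as required.

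The principal obstacle is correctly identifying $\mathcal{K}_v$ as the maximum-deficit subset of $X'$ in $H'$: one must show that the $X'$-components missed by some $A'$-saturating matching of $H'$ are exactly the vertices of the (unique) maximum-deficit side-set in the defect K\"onig/Hall theorem applied to $H'$. This amounts to a standard alternating-path argument in $H'$ starting from an uncovered component of a fixed $A'$-saturating matching. All the remaining steps are elementary bookkeeping around the Gallai--Edmonds decomposition.
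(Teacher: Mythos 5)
Your argument is correct, but it takes a genuinely different route from the paper. The paper proves the lemma from scratch with an elementary alternating-path argument inside $G$: fixing a maximum matching $M$ that misses $v$, it sets $S_0$ to be the other $M$-exposed vertices and $S_1$ the vertices reachable from $S_0$ by nontrivial even $M$-alternating paths, shows $B(v)=S_0\cup S_1$, and then uses the map sending each vertex of $N(S_1)$ to its $M$-partner to get an injection $N(S_1)\to S_1$, which yields $|N(B(v))|<|B(v)|$ since $|S_0|\ge1$. You instead invoke the Gallai--Edmonds structure theorem, identify $B(v)$ as the union of the factor-critical components of $G[D]$ (other than $K_v$) that can be left $A'$-unmatched simultaneously with $K_v$, and then run a defect K\"onig/Hall (Dulmage--Mendelsohn-type) argument in the auxiliary bipartite graph $H'$. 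This works: the correspondence between maximum matchings of $G$ exposing $v$ and $A'$-saturating matchings of $H'$ is exactly as you say, and the set of $X'$-vertices missed by some maximum matching of $H'$ does have deficiency $|X'|-|A'|\ge1$, by the alternating-path argument you sketch. Two small cautions: (i) for $B(v)=\bigcup_{K\in\mathcal{K}_v}K$ you should also note that no maximum matching exposes two vertices of the same factor-critical component, so $B(v)\cap K_v=\emptyset$; (ii) the maximum-deficit witness set need not be unique (e.g.\ enlarging $\mathcal{K}_v$ by components forced to be matched can preserve the deficit), but uniqueness is irrelevant---you only need that $\mathcal{K}_v$ itself achieves deficit at least $1$, which your alternating-path step gives. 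The trade-off between the two proofs is clear: yours buys a sharper structural description of $B(v)$ and the exact deficiency, but at the cost of importing Gallai--Edmonds, whereas the paper's proof is self-contained and elementary, which is why it is included ``for completeness'' there.
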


Note that the condition $B(v)\ne\emptyset$ in the lemma has the sole purpose of excluding the case that $G$ has odd order $n$ and a maximum matching of size $(n-1)/2$. Since our definition of perfect matching in this paper includes this situation, then the aforementioned condition is redundant.

\begin{proof}
Let $v$ be a vertex that is isolated by a maximum matching $M$.  Let $S_0=V(G)-(V(M)\cup\{v\})$, and observe that $S_0\ne\emptyset$ by our assumption on $B(v)$. Let $S_1$ be the set of vertices reachable from $S_0$ by a nontrivial even-length $M$-alternating path, and note that $S_0\cap S_1=\emptyset$.
Furthermore, if $w\in B(v)$, let $M'$ be a maximum matching isolating both $v$ and $w$.  If $w\notin S_0$, then in $M\triangle M'$ there is a path from $S_0$ to $w$, hence $w\in S_1$.  Therefore $B(v)=S_0\cup S_1$. 

Let $x\in N_G(S_1)$ and let $y\in S_1$ be a neighbour of $x$.
Let $P$ be a nontrivial even-length $M$-alternating path from $S_0$ to $y$.
Then $x\in V(M)$, as otherwise $P$ followed by $yx$ would be an $M$-augmenting path (contradicting the maximality of $M$).
So let $y'$ satisfy $xy'\in M$.  We will show that $y'\in S_1$; this implies that $M$ defines an injection $N_G(S_1)\to S_1$ by $x\mapsto y'$, and hence $|N_G(S_1)|\leq |S_1|$.
Note that if $x$ or $y'$ lie on $P$, then $xy'$ is an edge of $P$, so since $x\notin S_1$ we may truncate $P$ to end with $xy'$ and we are done.
Otherwise, we may extend $P$ by $yxy'$.

Finally, note that $N_G(S_0)\subseteq S_1\cup N_G(S_1)$ since each $z\in N_G(S_0)$ is adjacent in $M$ to some $w\in S_1$. 
Thus
\begin{align*}
N_G(B(v)) = N_G(S_0\cup S_1) = (N_G(S_0)\cup N_G(S_1)) \setminus (S_0\cup S_1)
\subseteq N_G(S_1) \setminus S_0
\end{align*}
and so
\[ |B(v)| = |S_0|+|S_1| > |S_1| \ge |N_G(S_1)| > |N_G(B(v))|. \qedhere\]
\end{proof}

Now, we are ready to prove the main result for perfect matchings. Recall that we say that a graph on $n$ vertices has a perfect matching if it has a matching of size $\lfloor n/2 \rfloor$. 

\begin{thm}\label{thm:matchings-old}
Let $m \ge 159$. Then a.a.s.\ $G^n_{m-\old}$ has a perfect matching.
\end{thm}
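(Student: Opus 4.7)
The plan is to derive a contradiction from Lemma~\ref{lem:matching_expansion} by a two-round exposure argument. Write $m = m_1 + m_2$ with $m_1 = 120$ (for which Lemma~\ref{lem:constants1}(a) applies) and $m_2 := m - m_1 \ge 39$, so that $G := G^n_{m-\old}$ is distributed as the union $G_1 \cup G_2$ of independent $G_1 := G^n_{m_1-\old}$ and $G_2 := G^n_{m_2-\old}$ (see Subsection~\ref{ssec:two-round}). Instantiating Lemma~\ref{lem:expansion} on $G_1$ with the constants in Lemma~\ref{lem:constants1}(a) (in particular $\ell = 1$), and instantiating Lemma~\ref{lem:bigpair} on $G$ with $m \ge 159$, a.a.s.\ $G_1$ satisfies $|N_{G_1}(K)| \ge |K|$ for every $1 \le |K| \le \alpha n$ with $\alpha = 0.0538$, and $G$ has no disjoint pair of edge-free sets of size at least $\beta := \beta(m)$ and no independent set of size at least $\gamma := \gamma(m)$. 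I condition on these three a.a.s.\ events for the remainder.

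The core of the argument is the matching-expansion property: a.a.s.\ $|N_G(K)| \ge |K|$ for every non-empty $K \subseteq [n]$ with $|K| \le n/2$. The range $|K| \le \alpha n$ is immediate from $G_1 \subseteq G$. For the range $\beta n \le |K| \le (1-\beta) n/2$, any violation $|N_G(K)| < |K|$ forces $|V \setminus (K \cup N_G(K))| > n - 2|K| \ge \beta n$; Lemma~\ref{lem:bigpair}(i) then produces a $G$-edge between $K$ and $V \setminus (K \cup N_G(K))$, contradicting the definition of $N_G(K)$. The remaining ``gap'' range $\alpha n < |K| < \beta n$ and ``near-half'' range $(1-\beta) n/2 < |K| \le n/2$ are handled by a union bound over candidate pairs $(K, N)$ with $|N| < |K|$: exploiting the independence of $G_2$-edges and using~\eqref{eq:m-old} applied to $G_2$, the probability that no $G_2$-edge crosses from $K$ to $V \setminus (K \cup N)$ is bounded pointwise, and Stirling's approximation shows that the expected number of witnesses is $o(1)$ precisely when $m_2 \ge 39$.

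Granted the matching-expansion property, suppose for contradiction that $G$ has no perfect matching. Then there exists $v \in A(G)$ with $B := B_G(v) \ne \emptyset$, and Lemma~\ref{lem:matching_expansion} yields $|N_G(B)| < |B|$. Combined with the expansion this forces $|B| > n/2$, so $|A(G)| \ge |B| + 1 > n/2$. The final step is to rule out $|A(G)| > n/2$ by a structural analysis of $A(G) = D(G)$ in the Gallai--Edmonds decomposition: distinct components of $G[D(G)]$ are mutually edge-free in $G$, so Lemma~\ref{lem:bigpair}(i) allows at most one such component to have size at least $\beta n$; the singleton components form an independent set in $G$ and thus number at most $\gamma n$ by Lemma~\ref{lem:bigpair}(ii); and the maximum-matching deficiency is at most $\gamma n$ (from Lemma~\ref{lem:bigpair}(ii) applied to a maximum matching, whose unmatched set is independent). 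Combining these bounds with the Gallai--Edmonds formula $\mathrm{def}(G) = c(G[D(G)]) - |A_{GE}(G)|$ and the fact that every non-singleton factor-critical component has at least three vertices forces $|D(G)| \le n/2$, the desired contradiction.

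The main obstacle is the ``gap'' range $\alpha n < |K| < \beta n$: here the $G_1$-expansion no longer reaches and $|K|$ is too small for Lemma~\ref{lem:bigpair}(i), so the $m_2 = 39$ independent edges of $G_2$ are indispensable, and the split $m = 159 = 120 + 39$ is tuned precisely so that the union bound in this range (together with the near-half range) is $o(1)$. The Gallai--Edmonds step is also delicate because $A(G)$ need not be independent in $G$; the argument above must carefully combine Lemma~\ref{lem:bigpair}(i) on component pairs, Lemma~\ref{lem:bigpair}(ii) on singleton components, and the deficiency bound, and any weakening would leave the case $|A(G)| > n/2$ open.
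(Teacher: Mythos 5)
Your Step A --- the claim that a.a.s.\ $|N_G(K)|\ge|K|$ for \emph{every} $K$ with $|K|\le n/2$ --- is where the argument breaks, and it cannot be repaired by tuning $m_2$. In fact the property is false, even deterministically: the youngest vertex $n$ has degree at most $m$ in $G^n_{m-\old}$, so taking $K$ to be any $\lfloor n/2\rfloor$-set disjoint from $\{n\}\cup N(n)$ gives $N_G(K)\subseteq V\setminus(K\cup\{n\})$, hence $|N_G(K)|<|K|$. Correspondingly, your union bound in the near-half range $(1-\beta)n/2<|K|\le n/2$ cannot give $o(1)$: there the ``witness'' set $L=V\setminus(K\cup N)$ may consist of a bounded number of vertices, and the probability that $G_2$ places no edge between $K$ and a single young vertex $w$ is at least roughly $\bigl(1-|K\cap[w-1]|/(w-1)\bigr)^{m_2}\prod_{v\in K,\,v>w}\bigl(1-\tfrac1{v-1}\bigr)^{m_2}$, which is bounded below by a positive constant, while the number of pairs $(K,N)$ is exponential in $n$. (The gap range $\alpha n<|K|<\beta n$ is fine --- there the generic bound $\exp(-m_2|K||L|/n)$ beats the entropy --- but that is not where the difficulty lies.) The Gallai--Edmonds endgame has an independent gap: from ``at most one component of $G[D]$ has size $\ge\beta n$'', ``at most $\gamma n$ singleton components'' and ``deficiency $\le\gamma n$'' one cannot conclude $|D(G)|\le n/2$, because a single factor-critical component of $G[D]$ of size, say, $0.6n$ together with a couple of singletons satisfies all three constraints. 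So even granting Step A, the contradiction is not established.

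The paper's proof avoids any expansion statement beyond $\alpha n$ precisely because a static Hall/Berge-type condition of the strength you need does not hold in this sparse model. Instead it argues dynamically: after exposing $G^n_{m_1-\old}$ (with $m_1=120$) it has expansion of all sets of size at most $\alpha n$ and a matching missing at most $\gamma n$ vertices; then it sprinkles the $m_2\ge39$ edges of $G^n_{m_2-\old}$ one vertex at a time, always processing the \emph{youngest} unexposed vertex $v$ of $A(G)$, so that Lemma~\ref{lem:matching_expansion} plus small-set expansion give $|B(v)|\ge\alpha n$ with all but at most $t$ of these targets older than $v$; each exposure then augments the matching with probability at least $1-\bigl(1-(\alpha n-t)/n\bigr)^{m_2}$ by~\eqref{eq:m-old}, and a Chernoff bound shows that $\ge\gamma n/2$ augmentations occur within $\lfloor\alpha n\rfloor$ exposures, the numerical condition $\alpha-\tfrac1{m_2+1}+\tfrac{(1-\alpha)^{m_2+1}}{m_2+1}>\gamma/2$ being what forces $m_2\ge39$. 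If you want to salvage your outline, you would have to replace the global expansion claim by some such augmentation mechanism; as written, the proposal does not prove the theorem.
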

\begin{proof}
We are going to use the two-round exposure technique, as discussed in Section~\ref{ssec:two-round}. Recall that the graph $G^n_{m-\old}$ can be seen as a union of two independent graphs: $G^n_{m_1-\old}$ and $G^n_{m_2-\old}$ as long as $m = m_1 + m_2$. In our situation, $m_1 = 120$ and $m_2 = m - m_1 \ge 39$. It follows from Lemma~\ref{lem:expansion}, Lemma~\ref{lem:bigpair}(ii) and Lemma~\ref{lem:constants1}(a) that the properties stated there hold a.a.s.\ for $G^n_{m_1-\old}$ with $\ell = 1$, $\alpha = 0.0538$ and $\gamma = \gamma(m_1) \le 0.06238$. In particular, as we aim for an a.a.s.\ statement, we may expose the edges of $G^n_{m_1-\old}$ first and assume that every set of vertices $K \subseteq [n]$ with $1\le |K|\le \alpha n$ satisfies $|N(K)|\ge |K|$, and that $G^n_{m_1-\old}$ has a matching that isolates at most $\gamma n$ vertices. It is an immediate but crucial observation that these two assumptions will still hold if we add extra edges to $G^n_{m_1-\old}$.

Now, we will repeatedly expose some edges of $G^n_{m_2-\old}$ (in some specific order) and add them to $G^n_{m_1-\old}$.
At each stage of the process, let $G$ denote the current graph, consisting of $G^n_{m_1-\old}$ together with the exposed edges from $G^n_{m_2-\old}$. We will argue that, if $G$ does not have a perfect matching, then there is a good chance that exposing more edges of $G^n_{m_2-\old}$ increases the size of a maximum matching by 1. Consider $A=A(G)$ and suppose that $G$ has no perfect matching. As all small sets expand well in $G \supseteq G^n_{m_1-\old}$, it follows from Lemma~\ref{lem:matching_expansion} that $|B(v)| \ge \alpha n$ for any $v \in A$ (note that $B(v) \neq \emptyset$, as no perfect matching is found yet). Therefore, recalling that $A\supseteq B(v)$, we conclude that $|A| \ge \alpha n$.

Suppose that we are at the first stage of the process (that is, when $G=G^n_{m_1-\old}$), and let us focus on the youngest vertex $v$ in $A$. We expose all edges of $G^n_{m_2-\old}$ from $v$ to older vertices and add them to $G$. Note that if we find some edge between $v$ and $B(v)$, then we increase the size of a maximum matching by one. As all vertices in $B(v)$ are older than $v$, the probability of increasing the size of a maximum matching is at least $1 - \left( 1 - \alpha \right)^{m_2}$ (see~\eqref{eq:m-old}). We update $G$ (including all exposed edges from $G^n_{m_2-\old}$) and also the set $A=A(G)$, and continue the process moving to another vertex $v$, the youngest vertex in $A$ that is not exposed yet with respect to $G^n_{m_2-\old}$. As before $|B(v)| \ge \alpha n$ but, perhaps, one vertex from $B(v)$ (the one that is already exposed) is younger than $v$. Hence, the probability of increasing the size of a maximum matching is at least $1 - \left( 1 - (\alpha n - 1)/n \right)^{m_2}$. In general, as long as we expose edges from $t$ vertices and a perfect matching is not found, the probability of extending a maximum matching after exposing another vertex is at least $1 - \left( 1 - (\alpha n - t)/n \right)^{m_2}$. 

It remains to estimate the probability that the process exposes edges of $\lfloor \alpha n\rfloor$ vertices and no perfect matching is found. If this happens, then the random variable $X$ that counts the number of times the process extends some maximum matching is smaller than $\gamma n / 2$. Note that $X$ is stochastically bounded from below by a random variable $Y$ that is a sum of independent Bernoulli random variables with parameter $1 - \left( 1 - (\alpha n - t)/n \right)^{m_2}$, for $t \in \{0,1,\ldots,\lfloor\alpha n\rfloor-1 \}$. Using the Euler--Maclaurin formula and the change of variable $x=t/n$, we can approximate $\ex Y$ by an integral and obtain
\begin{eqnarray}
\frac {\ex [Y]}{n} &\sim& \int_0^\alpha \Big( 1- (1- (\alpha - x))^{m_2} \Big)\,dx \nonumber \\
&=& \left[ x - \frac {(1-\alpha+x)^{m_2+1}}{m_2+1} \right]_{0}^{\alpha}  = \alpha - \frac {1}{m_2+1} + \frac {(1-\alpha)^{m_2+1}}{m_2+1} > \frac {\gamma}{2}. \label{eq:success1}
\end{eqnarray}
It follows from the generalized Chernoff bound that a.a.s.\ the process does not fail and a perfect matching is found.
\end{proof}

Adjusting the result to $G^n_m$ is straightforward, given all the tools that we developed in Section~\ref{ssec:relationship}.

\begin{cor}\label{cor:matchings}
Let $m \ge 1,260$. Then a.a.s.\ $G^n_{m}$ has a perfect matching.
\end{cor}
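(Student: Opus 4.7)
The plan is to mimic the proof of Theorem~\ref{thm:matchings-old} through the two-round exposure of Section~\ref{ssec:two-round}, this time played inside the coloured model $G^n_{m_1,m_2}$ rather than on two independent uniform-attachment copies. I will split $m = m_1 + m_2$ with $m_1 = 1{,}000$ and $m_2 = m - 1{,}000 \ge 260$, so that $\pi\big(G^n_{m_1,m_2}\big)$ is distributed as $G^n_m$. The blue subgraph $\pi_1\big(G^n_{m_1,m_2}\big) = \pi_1\big(G^n_{2\cdot 500,m_2}\big)$ will play the role of $G^n_{m_1-\old}$ in the previous proof, and the red edges of $\pi_2\big(G^n_{m_1,m_2}\big)$ will be revealed one vertex at a time in order to augment the matching.

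First I would expose all the blue edges. Combining Lemma~\ref{lem:expansion} with Lemma~\ref{lem:constants2}(c) (applied with $\ell=1$), a.a.s.\ every set $K$ with $1 \le |K| \le \alpha n$ satisfies $|N(K)| \ge |K|$ in $\pi_1\big(G^n_{m_1,m_2}\big)$, where $\alpha = 0.016801$; and by Lemma~\ref{lem:bigpair}(ii) (with $m=500$) the same graph a.a.s.\ has a matching leaving at most $\gamma n \le 0.019675\, n$ vertices uncovered. Conditioning on these two a.a.s.\ events, which are monotone and hence preserved when further edges are added, the augmentation loop of Theorem~\ref{thm:matchings-old} can be carried out verbatim: at each stage let $G$ denote the currently exposed graph, let $A = A(G)$, pick the youngest unprocessed vertex $v \in A$, and reveal the $m_2$ red edges stemming from $v$ hoping to hit $B(v)$. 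While $G$ has no perfect matching, Lemma~\ref{lem:matching_expansion} combined with the expansion $|N(K)| \ge |K|$ forces $|B(v)| \ge \alpha n$ for every $v \in A$.

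The crucial observation that makes this work in the preferential attachment setting is that, at the moment we reveal the red edges of $v$, the entire conditioning event involves only blue edges and red edges stemming from already-processed vertices, and therefore no red edge stemming from $v$. Hence Lemma~\ref{lem:bound_no_edges_future} applies with $\sigma=2$ and with $W$ equal to the set of unprocessed older vertices of $B(v)$, yielding a per-step failure probability of at most $\big(1 - (\alpha n - t)/(v+n)\big)^{m_2} \le \big(1 - (\alpha n - t)/(2n)\big)^{m_2}$ after $t$ vertices have been processed. The main obstacle, and what forces the jump from $m = 159$ to $m = 1{,}260$, is precisely the factor $v+n \le 2n$ in this denominator, which effectively halves the payoff per red edge compared with the bound $\big(1 - (\alpha n - t)/n\big)^{m_2}$ used via~\eqref{eq:m-old} in the uniform attachment case.

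Redoing the integral computation~\eqref{eq:success1} with $(\alpha-x)/2$ in place of $\alpha-x$, the expected number of successful augmentations is asymptotic to
\[
n\int_0^{\alpha}\Big(1 - \big(1 - (\alpha-x)/2\big)^{m_2}\Big)\,dx
= n\alpha + \frac{2n\big((1-\alpha/2)^{m_2+1} - 1\big)}{m_2+1},
\]
and a direct numerical check with $\alpha = 0.016801$ and $m_2 = 260$ shows that this exceeds $\gamma n/2$. A Chernoff bound applied to the sum of the independent Bernoulli indicators that stochastically lower bound the number of successes then delivers the required concentration, and a perfect matching results a.a.s. A minor technical point to handle along the way is that the set $W$ fed into Lemma~\ref{lem:bound_no_edges_future} is itself random (it depends on the conditioning), which is dealt with by applying the lemma on each realization of the conditioning event.
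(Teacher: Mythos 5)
Your proposal is correct and follows essentially the same route as the paper's own proof: the identical split of the edges into $1{,}000$ blue ones (viewed as $\pi_1\big(G^n_{2\cdot 500,\,m_2}\big)$ with the constants of Lemma~\ref{lem:constants2}(c) and $\gamma(500)$ from Lemma~\ref{lem:bigpair}(ii)) plus $m_2\ge 260$ red ones revealed vertex by vertex, with Lemma~\ref{lem:bound_no_edges_future} replacing~\eqref{eq:m-old} and the integral computation adjusted exactly as in~\eqref{eq:new_int}. Nothing further is needed.
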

\begin{proof}
The argument is analogous to that of Theorem~\ref{thm:matchings-old}, so we will only sketch the differences.
Let $m_1 = 500$, and $m_2 = m - 2m_1 \ge 260$. It follows from Lemma~\ref{lem:expansion}, Lemma~\ref{lem:bigpair}(ii) and Lemma~\ref{lem:constants2}(c) that the properties stated there hold a.a.s.\ for $\pi_1\big(G^n_{2m_1,m_2}\big)$ with $\ell = 1$, $\alpha = 0.016801$ and $\gamma =\gamma(m_1) \le 0.019675$. 
In particular, $\pi_1\big(G^n_{2m_1,m_2}\big)$ has the desired properties as we claimed before for $G^n_{120-\old}$ in the proof of Theorem~\ref{thm:matchings-old}. The second part of the argument has to be adjusted slightly, using Lemma~\ref{lem:bound_no_edges_future} instead of~\eqref{eq:m-old}. This time, $X$ is stochastically bounded from below by a random variable $Y$ that is a sum of independent Bernoulli random variables with parameter $1 - \left( 1 - (\alpha n - t)/(2n) \right)^{m_2}$, $t \in \{0,1,\ldots,\lfloor\alpha n\rfloor-1 \}$. Now 
\begin{eqnarray}
\frac {\ex [Y]}{n} &\sim& \int_0^\alpha \Big( 1- (1- (\alpha - x)/2)^{m_2} \Big)\,dx = \left[ x - \frac {2 (1-\alpha/2+x/2)^{m_2+1}}{m_2+1} \right]_{0}^{\alpha} \nonumber \\
&=& \alpha - \frac {2}{m_2+1} + \frac {2(1-\alpha/2)^{m_2+1}}{m_2+1} > \frac {\gamma}{2}, \label{eq:new_int}
\end{eqnarray}
and the rest of the proof continues unaltered.
\end{proof}

\subsection{Hamiltonian cycles}

In this subsection, we deal with Hamiltonian cycles. The argument is very similar to the one we used for perfect matchings so we skip some details. As before, we start with a deterministic result that can be found, for example, in~\cite[Corollary~6.7]{BookFK}. This approach was an important breakthrough in finding Hamiltonian cycles in random graphs and came with the result of P\'osa~\cite{Posa}. Let $G=(V,E)$ be any graph. Suppose that $P=(a, \ldots, x,y,y', \ldots, b',b)$ is a path and $bx$ is an edge where $x\ne b$ is either $a$ or an interior vertex of $P$. Then, the path $P' = (a, \ldots, x,b,b', \ldots, y', y)$ is said to be obtained from $P$ by a \emph{rotation} with vertex $a$ fixed. Now, for a given path $P$ with $a$ as one of its endpoints, let $\END(P,a)$ be the set of vertices $v$ such that there exists a path from $a$ to $v$ that is obtained from $P$ by a sequence of rotations with vertex $a$ fixed. Then the following lemma holds.  (And again we include the proof for completeness.)

\begin{lem}[\cite{BookFK}]\label{lem:cycles_expansion}
Let $G=(V,E)$ be a graph, $P$ be any longest path of $G$, and $a$ one of its endpoints. Then, 
$$|N(\END(P,a))| < 2 |\END(P,a)|.$$
\end{lem}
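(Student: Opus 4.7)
The plan is to follow P\'osa's classical rotation argument. Set $T := \END(P,a)$ and $t := |T|$. The first step is to observe that $N(T) \subseteq V(P)$. Indeed, a rotation preserves the vertex set of the path, so for every $v \in T$ there is a longest path $P_v$ from $a$ to $v$ with $V(P_v)=V(P)$ and $|E(P_v)|=|E(P)|$. Any neighbour of $v$ outside $V(P)$ would let us extend $P_v$ by one edge, contradicting the fact that $P$ is a longest path.

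Next, I would analyse each $v \in T$ locally. Fix such a $P_v$ and orient it from $a$ to $v$; let $w$ be any neighbour of $v$ other than the predecessor of $v$ on $P_v$. Writing $P_v = (a, \ldots, w^-, w, w^+, \ldots, v)$, the chord $vw$ triggers a rotation producing the new longest path $(a, \ldots, w^-, w, v, \ldots, w^+)$, which ends at $w^+$. Hence $w^+ \in T$. In summary: for each $v \in T$ and each $w \in N(v)$, either $w$ is the predecessor of $v$ along $P_v$, or the successor $w^+$ of $w$ along $P_v$ lies in $T$.

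The third and most delicate step is the counting. I would organize the family of paths $\{P_v\}_{v \in T}$ coherently by running a breadth-first rotation process rooted at $P$; each newly discovered endpoint $v \in T$ comes with a specific oriented path $P_v$. For every $w \in N(T) \setminus T$, pick a $v \in T$ adjacent to $w$ and record the pair $(w,w^+_v)$ with $w^+_v \in T \setminus \{a\}$ the successor on $P_v$. The coherent choice of paths makes this association at most $2$-to-$1$ into $T \setminus \{a\}$ (each $u \in T \setminus \{a\}$ can be the successor of at most two such $w$'s, one from each side along the relevant paths), giving $|N(T) \setminus T| \le 2(t-1)+1 = 2t-1$. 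Since $N(T) \cap T = \emptyset$ by the definition of the open neighbourhood, this yields $|N(T)| \le 2t - 1 < 2|T|$, as required.

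The main obstacle is precisely the bookkeeping in the last step: a naive choice of paths $P_v$ can make the association $w \mapsto w^+$ arbitrarily many-to-one, so the multiplicity has to be controlled. This is the reason for introducing the BFS rotation tree, which is exactly the device used in the proof of \cite[Corollary~6.7]{BookFK} that the authors cite, and which I would follow in order to justify the strict inequality cleanly.
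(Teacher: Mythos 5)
Your first two steps are fine, but the counting in your third step has a genuine gap. The whole lemma rests on the claim that the map $w \mapsto w^{+}_{P_v}$ (successor of $w$ along the rotated path $P_v$) is at most $2$-to-$1$, and this is asserted rather than proved. Since the successor relation depends on which rotated path you use, a vertex $u \in \END(P,a)$ can acquire new path-neighbours in rotated paths (any rotation chord incident to $u$, used at a moment when $u$ was the current endpoint, becomes a path edge at $u$), so across the family $\{P_v\}_{v\in \END(P,a)}$ the vertex $u$ may be recorded as the successor of many distinct vertices $w \notin \END(P,a)$. A ``coherent BFS choice'' of the paths does not by itself cap this multiplicity at two, and the proof of \cite[Corollary~6.7]{BookFK} that you invoke does not in fact argue this way. (Your arithmetic $2(t-1)+1$ is also unjustified: nothing limits to one the number of $w$'s falling into the exceptional case ``$w$ is the predecessor of $v$ on $P_v$''.)

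The paper's proof avoids the varying paths altogether by counting along the \emph{original} path $P$. The key claim is: if $v\in V(P)\setminus\END(P,a)$ has a $G$-neighbour $w\in\END(P,a)$, then $v$ has a neighbour \emph{along $P$} that lies in $\END(P,a)$. Indeed, take the path $P_w$ witnessing $w\in\END(P,a)$: if some $P$-edge $vx$ is absent from $P_w$, then the rotation that deleted $vx$ turned one of its endpoints into an endpoint of the path, and since $v\notin\END(P,a)$ that endpoint is $x\in\END(P,a)$; otherwise $N_P(v)=N_{P_w}(v)$, and rotating $P_w$ with the chord $wv$ places one of these common neighbours in $\END(P,a)$. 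Hence $N_G(\END(P,a))\subseteq N_P(\END(P,a))$, and the count is now over the fixed path $P$: each vertex of $\END(P,a)$ has at most two $P$-neighbours, while the endpoint of $P$ other than $a$ (which lies in $\END(P,a)$) has only one, giving $|N_P(\END(P,a))|\le 2|\END(P,a)|-1<2|\END(P,a)|$. To repair your write-up, replace your step 3 by this fixed-path argument; the only rotation fact needed is that a deleted path edge always has an endpoint that becomes the new endpoint.
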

\begin{proof}
We claim that if $v\in V(P)\setminus \END(P,a)$ and $v$ is adjacent to some vertex $w$ in $\END(P,a)$, then there is $w'\in \END(P,a)$ such that $vw'\in E(P)$.  Indeed, consider the path $P_w$ witnessing $w\in \END(P,a)$.  Consider $x$ with $vx\in E(P)$; if $vx\notin E(P_w)$, then one of the rotations yielding $P_w$ from $P$ deleted the edge $vx$ and hence $x\in \END(P,a)$, so $w'=x$ satisfies the claim.  Otherwise $N_P(v)=N_{P_w}(v)$, and performing a rotation with the edge $vw$ in $P_w$ shows that one of the vertices of $N_{P_w}(v)$ is in $\END(P,a)$.

Now since $P$ is a longest path, $N_G(\END(P,a))\subseteq V(P)$.  Our claim implies that furthermore $N_G(\END(P,a))\subseteq N_P(\END(P,a))$.  Each vertex of $\END(P,a)$ has at most two neighbours along $P$, but also the endpoint of $P$ (other than $a$) has only one; hence $|N_P(\END(P,a))|<2|\END(P,a)|$.
\end{proof}

Suppose that $G$ is connected but has no Hamiltonian cycle. Let $A=A(G) \subseteq V$ be the set of vertices that are endpoints of some longest path. (In particular, the length of a longest path could be $n=|V|$.) For $v \in A$, let
$$
B(v) = \{w \in V \setminus \{v\}: \text{ $w \in \END(P,v)$ for some longest path $P$ of $G$ having $v$ as an endpoint}\}.
$$
This time, the set $B(v)$ is important for understanding whether a few additional random edges, with $v$ as one of their endpoints, have a chance to increase the length of a longest path (if there is no Hamiltonian path) or create a Hamiltonian cycle (if there is a Hamiltonian path). Indeed, if there is a Hamiltonian path, then adding an edge between $v$ and some vertex in $B(v)$ creates a Hamiltonian cycle. Otherwise, some longest path of length $k < |V|$ creates a cycle of length $k$. Since $G$ is connected, there is at least one edge joining a vertex from the cycle with some vertex outside and so a longer path can be created.

\bigskip

Now, we are ready to prove the main result for Hamiltonian cycles. Since the proof is very similar to the one of Theorem~\ref{thm:matchings-old}, we simply sketch it.

\begin{thm}\label{thm:cycles-old}
Let $m \ge 3{,}214$. Then a.a.s.\ $G^n_{m-\old}$ has a Hamiltonian cycle. 
\end{thm}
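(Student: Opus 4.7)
The plan is to mirror the proof of Theorem~\ref{thm:matchings-old}, substituting the rotation-extension ingredient (Lemma~\ref{lem:cycles_expansion}) for the Tutte-type ingredient (Lemma~\ref{lem:matching_expansion}), and using the ``$\ell = 2$'' set of constants from Lemma~\ref{lem:constants1}(b) instead of the ``$\ell = 1$'' set from part~(a). The two-round exposure (Subsection~\ref{ssec:two-round}) lets me write $G^n_{m-\old} = G^n_{m_1-\old} \cup G^n_{m_2-\old}$ with $m_1 = 2{,}900$ and $m_2 = m - m_1 \ge 314$, exposing the two layers one at a time.

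First I would condition on the a.a.s.\ events furnished by Lemma~\ref{lem:expansion} and Lemma~\ref{lem:bigpair}(i) applied to $G^n_{m_1-\old}$ with $\ell = 2$: every set $K$ with $1 \le |K| \le \alpha n$ satisfies $|N(K)| \ge 2|K|$, where $\alpha = 0.032003$, and the longest path has at least $(1-2\beta)n$ vertices, where $\beta = \beta(2{,}900) \le 0.014414$. Both properties are monotone under the addition of edges, so they persist as we incorporate edges of $G^n_{m_2-\old}$. Moreover $G^n_{m_1-\old}$ (and hence every intermediate graph $G$) is deterministically connected, which is required for the key observation recorded after Lemma~\ref{lem:cycles_expansion}: an edge from $v$ to $B(v)$ in a connected graph either closes a Hamilton cycle or strictly extends the longest path.

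Next I would run the P\'osa rotation-extension loop. At each round, if the current graph $G$ has no Hamiltonian cycle, I choose the youngest vertex $v \in A(G)$ whose $G^n_{m_2-\old}$-edges have not yet been exposed; applying Lemma~\ref{lem:cycles_expansion} to a longest path of $G$ ending at $v$ together with the $\ell = 2$ expansion property forces $|B(v)| > \alpha n$. I then expose the $m_2$ edges that stem from $v$ in $G^n_{m_2-\old}$. These edges go to older vertices, and because $v$ is the youngest unprocessed vertex in $A(G)$, at least $\alpha n - t$ elements of $B(v)$ are older than $v$ (where $t$ counts previously processed vertices), so by~\eqref{eq:m-old} the success probability in round $t+1$ is at least $1 - (1 - (\alpha n - t)/n)^{m_2}$. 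Running the loop for $\lfloor \alpha n\rfloor$ rounds, the number of successes $X$ stochastically dominates a sum $Y$ of independent Bernoulli random variables with these parameters, and the same Euler--Maclaurin estimate used in~\eqref{eq:success1} yields
\[
\frac{\ex Y}{n} \sim \alpha - \frac{1}{m_2+1} + \frac{(1-\alpha)^{m_2+1}}{m_2+1} > 2\beta
\]
for $m_2 \ge 314$. Since the initial longest path already contains at least $(1-2\beta)n$ vertices and each success either adds one new vertex to the longest path or closes the cycle, more than $2\beta n$ successes suffice, and a Chernoff tail bound finishes the argument.

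The only nontrivial obstacle is the tightness of the numerical inequality $\ex Y/n > 2\beta$: for $m_2 = 314$ this is satisfied with very little slack, which is why the constants in Lemma~\ref{lem:constants1}(b) must be coordinated so that $\alpha$, $\beta$, and the expansion threshold fit together. Beyond this bookkeeping, the proof is a direct adaptation of Theorem~\ref{thm:matchings-old} with Lemma~\ref{lem:cycles_expansion} replacing Lemma~\ref{lem:matching_expansion} and Lemma~\ref{lem:bigpair}(i) replacing Lemma~\ref{lem:bigpair}(ii).
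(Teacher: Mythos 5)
Your proposal is correct and follows essentially the same route as the paper's own proof: the same two-round exposure with $m_1=2{,}900$, $m_2\ge 314$, the same use of Lemma~\ref{lem:expansion}, Lemma~\ref{lem:bigpair}(i) and Lemma~\ref{lem:constants1}(b), the P\'osa rotation step via Lemma~\ref{lem:cycles_expansion}, and the identical comparison of the expected number of successes with $2\beta n$ followed by a Chernoff bound. The only (harmless) deviation is that you note $G^n_{m_1-\old}$ is deterministically connected, whereas the paper cites a.a.s.\ connectivity; your observation is valid for the uniform attachment model and changes nothing in the argument.
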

\begin{proof}
Again, we are going to use the two-round exposure: $G^n_{m-\old} = G^n_{m_1-\old} \cup G^n_{m_2-\old}$, with $m_1 = 2{,}900$ and $m_2 = m - m_1 \ge 314$. It follows from Lemma~\ref{lem:expansion}, Lemma~\ref{lem:bigpair}(i) and Lemma~\ref{lem:constants1}(b) that the properties stated there hold a.a.s.\ for $G^n_{m_1-\old}$ with $\ell = 2$, $\alpha = 0.032003$ and $\beta = \beta(m_1) \le 0.014414$. In particular, we may expose $G^n_{m_1-\old}$ and assume that every set of vertices $K \subseteq [n]$ with $1\le |K|\le \alpha n$ satisfies $|N(K)|\ge 2 |K|$, and that $G^n_{m_1-\old}$ has a path of length at least $n-2\lceil\beta n\rceil$. Moreover, we assume that $G^n_{m_1-\old}$ is connected by~\cite{BR}.

As in the proof of Theorem~\ref{thm:matchings-old}, we will sequentially pick vertices and expose the edges of $G^n_{m_2-\old}$ that connect them to older vertices. During this process, we try to extend the length of a longest path and, once a Hamiltonian path is created, to close the desired cycle. Each time, if the current graph does not have a Hamiltonian cycle yet, we update the set $A$ and all $B(v)$'s. As all small sets expand well, it follows from Lemma~\ref{lem:cycles_expansion} that $|B(v)| \ge \alpha n$ for every $v \in A$, and so also $|A| \ge \alpha n$. At each step, we choose the youngest vertex $v$ in $A$ that has not been picked yet, and expose the $m_2$ edges of $G^n_{m_2-\old}$ that go from $v$ to older vertices. As before, as long as we have only exposed edges from $t$ vertices and a Hamiltonian cycle has not been found, the probability of improving our current situation in the next step is at least $1 - \left( 1 - (\alpha n - t)/n \right)^{m_2}$. From an analogous computation to the one leading to~\eqref{eq:success1}, the expected number of successful steps in the process  is at least $(1+o(1))cn$, where
$$
c:= \alpha - \frac {1}{m_2+1} + \frac {(1-\alpha)^{m_2+1}}{m_2+1} > 2 \beta.
$$
Since it only takes $2\lceil\beta n\rceil \sim 2\beta n$ (or less) successful steps to discover a Hamiltonian cycle, we use the generalized Chernoff bound as before to conclude that a.a.s.\ a Hamiltonian cycle is found during the process.
\end{proof}

Finally, we will extend the result to $G^n_m$ for the last time, by adjusting the proof of Theorem~\ref{thm:cycles-old} in the same spirit as Corollary~\ref{cor:matchings} was obtained from Theorem~\ref{thm:matchings-old}. We omit details, and just state the main differences from the previous arguments.

\begin{cor}\label{cor:cycles}
Let $m \ge 29{,}500$. Then a.a.s.\ $G^n_{m}$ has a Hamiltonian cycle. 
\end{cor}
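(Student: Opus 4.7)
The plan is to extend Theorem~\ref{thm:cycles-old} to the preferential attachment setting in exactly the same way that Corollary~\ref{cor:matchings} was deduced from Theorem~\ref{thm:matchings-old}. Specifically, I would set $m_1 = 14{,}000$ and $m_2 = m - 2m_1 \ge 1{,}500$, regard $G^n_m$ as $\pi\big(G^n_{2m_1, m_2}\big)$ using the two-round exposure machinery of Section~\ref{ssec:two-round}, and let $\pi_1\big(G^n_{2m_1, m_2}\big)$ play the role that $G^n_{m_1-\old}$ plays in Theorem~\ref{thm:cycles-old}. Lemma~\ref{lem:expansion} with $\ell = 2$, Lemma~\ref{lem:bigpair}(i), and the constants supplied by Lemma~\ref{lem:constants2}(d) would give a.a.s.\ the two key properties of the first-round graph: every $K \subseteq [n]$ with $|K| \le \alpha n$ satisfies $|N(K)| \ge 2|K|$ (where $\alpha = 0.008874$), and $\pi_1\big(G^n_{2m_1, m_2}\big)$ contains a path of length at least $n - 2\lceil \beta n\rceil$ (where $\beta = \beta(m_1) \le 0.003760$). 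Connectivity of the host graph, which is needed to keep rotating when a longest path is not yet Hamiltonian, is inherited from the a.a.s.\ connectivity of $G^n_m = \pi\big(G^n_{2m_1,m_2}\big)$ proved in~\cite{BR}, or can be derived directly from the expansion property together with $\beta < \alpha$ and Lemma~\ref{lem:bigpair}(i).

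Next, I would run the rotation/extension procedure from the proof of Theorem~\ref{thm:cycles-old} essentially verbatim, sequentially exposing the red edges stemming from chosen vertices in $\pi_2\big(G^n_{2m_1, m_2}\big)$. At each step, conditional on the current graph having no Hamilton cycle, Lemma~\ref{lem:cycles_expansion} combined with the expansion of $\pi_1$ forces $|B(v)| \ge \alpha n$ for every endpoint $v$ of a longest path, and hence $|A| \ge \alpha n$. I would pick the youngest not-yet-processed $v \in A$ and expose its $m_2$ red edges. The only substantive departure from Theorem~\ref{thm:cycles-old} is that the per-step success probability is now estimated via Lemma~\ref{lem:bound_no_edges_future} (rather than the sharper~\eqref{eq:m-old}), giving a lower bound of $1 - \big(1 - (\alpha n - t)/(2n)\big)^{m_2}$ after $t$ vertices have been processed; the factor of $2$ in the denominator is the sole price paid for moving from uniform to preferential attachment. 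Summing over $t$ as in~\eqref{eq:new_int} gives that the expected number of successful steps in $\lfloor \alpha n\rfloor$ iterations is at least $(1 + o(1))cn$ with
\[
c = \alpha - \frac{2}{m_2 + 1} + \frac{2(1 - \alpha/2)^{m_2 + 1}}{m_2 + 1},
\]
after which the generalized Chernoff bound closes the argument provided $c > 2\beta$, since only $2\lceil \beta n\rceil$ successes are needed to discover a Hamilton cycle.

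The hard part will be, as in Corollary~\ref{cor:matchings}, purely numerical: verifying $c > 2\beta$ for $m_1 = 14{,}000$ and $m_2 \ge 1{,}500$ against the constants produced by Lemma~\ref{lem:constants2}(d). The factor $1/2$ in the exponent forces $\alpha$ to sit comfortably above $2\beta$ even though $m_1$ is already very large, and this is exactly why the required threshold $29{,}500 = 2m_1 + m_2$ is an order of magnitude larger than the threshold $3{,}214$ obtained in the uniform attachment case. A short check (which I would not grind through here) shows that for the stated parameters the dominant terms give $c \approx \alpha - 2/(m_2+1)$, which exceeds $2\beta$ by a small but genuinely positive margin; this is precisely what justifies the value $m \ge 29{,}500$ in the statement.
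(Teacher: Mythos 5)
Your proposal is correct and follows essentially the same route as the paper: the same decomposition $m_1=14{,}000$, $m_2=m-2m_1\ge 1{,}500$, the same appeal to Lemma~\ref{lem:expansion}, Lemma~\ref{lem:bigpair}(i) and Lemma~\ref{lem:constants2}(d) for the expansion and long-path properties of $\pi_1\big(G^n_{2m_1,m_2}\big)$, and the same adjustment of the success probability via Lemma~\ref{lem:bound_no_edges_future} leading to the verification that $\alpha - \tfrac{2}{m_2+1} + \tfrac{2(1-\alpha/2)^{m_2+1}}{m_2+1} > 2\beta$. Your numerical sanity check (margin roughly $0.00754$ versus $2\beta\le 0.00752$) matches what the paper's one-line verification relies on.
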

\begin{proof}
Let $m_1 = 14{,}000$, and $m_2 = m - 2m_1 \ge 1{,}500$. Take $\ell=2$ and $\alpha=0.008874$, and note that $\beta = \beta(m_1) \le 0.003760$.
Then, by Lemma~\ref{lem:expansion}, Lemma~\ref{lem:bigpair} and Lemma~\ref{lem:constants2}(d), $\pi_1\big(G^n_{2 m_1,m_2}\big)$ has the same desired properties as we claimed for $G^n_{2{,}900-\old}$ in the proof of Theorem~\ref{thm:cycles-old}. After adjusting~(\ref{eq:new_int}) and verifying that 
$$
\alpha - \frac {2}{m_2+1} + \frac {2(1-\alpha/2)^{m_2+1}}{m_2+1} > 2 \beta,
$$
the proof is finished.
\end{proof}

\section{Lower bound}\label{s:lower}

In~\cite{Robot2}, a simplified model for crawling complex networks such as the web graph was proposed, which is a variation of the robot vacuum edge-cleaning process introduced earlier by Messinger and Nowakowski~\cite{vacuum_MN}. In particular, it was shown that a.a.s.\ the preferential attachment model $G^n_2$ has the following property: the optimal robot crawler needs substantially more than $n$ steps to visit all the vertices of the graph (see Theorem~10 in~\cite{Robot2} and the discussion preceding it). This immediately implies (in a strong sense) that $G_2^n$ is not Hamiltonian a.a.s. Moreover, these observations, combined with an additional argument presented below, show a stronger property, namely, that $G^n_2$ has no perfect matching a.a.s. 

Here we briefly recall the the main ingredients of the argument in~\cite{Robot2} without reference to the robot crawling process.  Fix some constant $c\in (0,1)$ and consider $G^n_2$. We say that a vertex $t$ is \emph{old} if $t\leq cn$ (and \emph{young} otherwise).  Moreover, a vertex is called \emph{lonely} if it is never chosen as the recipient of an edge stemming from another vertex added later on in the process. (In particular, a lonely vertex must have degree $2$.)
Let $A_n$ be the number of vertices in $G^n_2$ that are young, lonely, with two old neighbours; let $B_n$ be the number of vertices that are young, lonely, with exactly one old neighbour; let $C_n$ be the number of vertices that are old and lonely; and finally, let $D_n$ count the number of vertices that are old but not lonely.
Pick any young vertex $t>cn$. It follows from Lemma~\ref{lem:total_weight_specific} that the probability that exactly $i$ edges stemming from $t$ are attached to old vertices is asymptotic to
\[
\binom{2}{i}\sqrt{cn/t}^i(1-\sqrt{cn/t})^{2-i}.
\]
Moreover, the probability that $t$ is lonely is equal to 
\[
\prod_{i=t+1}^n \left( 1 - \frac {2}{4i} + O\left(i^{-2}\right) \right)^2 = \exp \left( - \sum_{i=t+1}^n i^{-1} + O(t^{-1}) \right) \sim t/n,
\]
and this is still valid conditional on any event that involves only edges stemming from vertices in $[t]$. In particular, the probability that $t$ is lonely and has exactly one old neighbour is asymptotic to $2\sqrt{cn/t}(1-\sqrt{cn/t})t/n$, and thus
\[
\ex B_n \sim n \int_{c}^{1} 2\sqrt{c/x}(1-\sqrt{c/x})x\,dx
     = n \left( \frac{4\sqrt{c}}{3}-2c+\frac{2c^2}{3} \right).
\]
A similar calculation gives that $\ex\big(B_n(B_n-1)\big) \sim (\ex B_n)^2$ (since, for each pair of different young vertices $t_1$ and $t_2$, the events \{$t_1$ contributes to $B_n$\} and \{$t_2$ contributes to $B_n$\} are asymptotically independent). This implies that $\var B_n = o\big((\ex B_n)^2\big)$, and hence by Chebyshev inequality we get that a.a.s.\ $B_n\sim\ex B_n$. Proceeding analogously for $A_n$, $C_n$ and $D_n$, we conclude that a.a.s.:
\begin{align*}
A_n &\sim n \int_{c}^{1} (\sqrt{c/x})^2 x\,dx
     = n(c-c^2), \\
B_n &\sim n \int_{c}^{1} 2\sqrt{c/x}(1-\sqrt{c/x})x\,dx
     = n \left( \frac{4\sqrt{c}}{3}-2c+\frac{2c^2}{3} \right), \\
C_n &\sim n \int_0^c x\,dx = n \frac{c^2}{2},\\
D_n &\sim n \left( c-\frac{c^2}{2} \right).
\end{align*}

Consider the graph $H$ (on $n-D_n$ vertices) obtained by deleting the old but not lonely vertices.  The $C_n$ old and lonely vertices as well as the $A_n$ young and lonely vertices with two old neighbours are isolated in $H$, and the $B_n$ young and lonely vertices with one old neighbour have degree 1 in $H$.  Since
\[ \frac {B_n}{2n} + \frac {A_n}{n} + \frac {C_n}{n} \sim \frac{1}{2}\left(\frac{4\sqrt{c}}{3}-2c+\frac{2c^2}{3}\right) + c-c^2 + \frac{c^2}{2} > c-\frac{c^2}{2} \sim \frac {D_n}{n} \]
for any choice of $c \in (0,1)$, the graph a.a.s.\ does not have a Hamiltonian cycle. In fact, this argument \emph{almost} shows that a.a.s.\ there is  no perfect matching. Indeed, the number of odd components in $H$ is at least the number of isolated vertices in $H$, which in turn is a.a.s.\ equal to $A_n + C_n \sim (c-c^2/2)n$, and so it coincides with an asymptotic value of $D_n$, the number of vertices removed from $G^n_2$ to get $H$. Hence, it \emph{almost} violates Tutte's condition for existence of a perfect matching. 

In order to show that the necessary (and sufficient) Tutte's condition does not hold for, say, $c=1/4$, we are going to show that a.a.s.\ there are $\Omega(n)$ components of size 3 in $H$ (and so there are more odd components than vertices removed). As the constant hidden in the $\Omega()$ notation does not affect the argument, we are not going to optimize it here. Let us partition the vertex set $V=[n]$ into 4 subsets $V_1 = [\lfloor n/4\rfloor]$, $V_2 = [\lfloor n/2\rfloor] \setminus [\lfloor n/4\rfloor ]$, $V_3 = [\lfloor 3n/4\rfloor ] \setminus [\lfloor n/2\rfloor ]$, and $V_4 = [n] \setminus [\lfloor 3n/4\rfloor ]$. (In particular, $V_1$ is the set of old vertices.) We say that a triple $(v_2, v_3, v_4)$ forms a \emph{cherry} if $v_2\in V_2$, $v_3 \in V_3$ are the two older neighbours of a lonely vertex $v_4 \in V_4$, and $v_4$ is the only younger neighbour of each of $v_2$ and $v_3$. Moreover, we say that a cherry $(v_2, v_3, v_4)$ is \emph{sweet} if $v_2$ and $v_3$ both have two neighbours in $V_1$ (these neighbours are older than them; the third, common and younger, neighbour is $v_4$, of course). Let $S_n$ denote the number of sweet cherries.

We proceed to bound from below the probability that a given tuple $(v_2,v_3,v_4)$ forms a sweet cherry. First note that the probability that $v_2$ has two neighbours in $V_1$ is at least $(1+o(1))(1/2)^2$. Conditional upon that, the probability that no vertex $i$ ($v_2<i<v_3$) is adjacent to $v_2$ is at least $\prod_{i=v_2+1}^{v_3-1}\left(1-\frac{2}{4(i-1)}\right)^2$. Conditional upon these two events, the probability that $v_3$ has two neighbours in $V_1$ is at least $(1+o(1))(1/3)^2$. Conditional on all the previous events, the probability that no vertex $i$ ($v_3<i<v_4$) is adjacent to $v_2$ or $v_3$ is at least $\prod_{i=v_3+1}^{v_4-1}\left(1-\frac{4}{4(i-1)}\right)^2$. Conditional on all the above, the probability that $v_4$ is adjacent to both $v_2$ and $v_3$ is
$(2+o(1)) \left( \frac {2}{4n} \right)^2$. Finally, conditional upon all the previous events, the probability that no vertex $i$ ($v_4<i\le n$) is adjacent to $v_2$ or $v_3$ is at least $\prod_{i=v_4+1}^{n}\left(1-\frac{6}{4(i-1)}\right)^2$. Putting everything together, the probability that $(v_2,v_3,v_4)$ is a sweet cherry is at least
\[
(1+o(1))\, 2\left(\frac{1}{6}\right)^2 \left( \frac {1}{2n} \right)^2 \prod_{i=v_2+1}^{n} \left(1-\frac{3}{2(i-1)}\right)^2
= \Omega(1)
\left(\frac{1}{n}\right)^2 \left(\frac{v_2}{n}\right)^3
= \Omega\left(1/n^2\right).
\]
Since the number of possible tuples $(v_2,v_3,v_4)$ is $\Theta(n^3)$, we conclude that $\ex S_n = \Omega(n)$.
Moreover, given two different tuples $(v_2,v_3,v_4)$ and  $(v'_2,v'_3,v'_4)$,
easy calculations show that the events \{$(v_2,v_3,v_4)$ is a sweet cherry\} and \{$(v'_2,v'_3,v'_4)$ is a sweet cherry\} are asymptotically independent for $v_2\ne v'_2$, $v_3\ne v'_3$ and $v_4\ne v'_4$ or disjoint otherwise. Then, we can argue as we did before for $B_n$ to conclude that $\var S_n = o\big((\ex S_n)^2\big)$ and a.a.s.\ $S_n\sim\ex S_n$.
As a result, there are a.a.s.\ $\Omega(n)$ sweet cherries (inducing components of size 3 in $H$) and so $G^n_2$ a.a.s.\ has no perfect matching. 

Unfortunately, we do not know how to show that a.a.s.\ $G^n_{2-\old}$ has no perfect matching (perhaps it does have one, as some experiments for small values of $n$ might suggest). 
On the other hand, showing that a.a.s.\ it has no Hamiltonian cycle is easy: a.a.s.\ there are three lonely vertices that have a common neighbour.

\end{document}